\theoremstyle{definition} % Pour tout ce qui ressemble à des définitions : définitions, notations...
\newtheorem{Def}{Définition}[subsection] % Définitions.
\theoremstyle{plain} % Pour tous ce qui ressemble à des théorèmes.
\newtheorem{Pro}[Def]{Proposition} % Propositions.
\newtheorem{Lem}[Def]{Lemme} % Lemmes.
\newtheorem{The}[Def]{Théorème} % Théorèmes.
\newtheorem{Cor}[Def]{Corollaire} % Corollaires.
\newtheorem*{The*}{Théorème}
\theoremstyle{remark} % Pour tout ce qui ressemble à des remarques.
\newenvironment{abstracts}{%
  \ifx\maketitle\relax
    \ClassWarning{\@classname}{Abstract should precede
      \protect\maketitle\space in AMS document classes; reported}%
  \fi
  \global\setbox\abstractbox=\vtop \bgroup
    \normalfont\Small
    \list{}{\labelwidth\z@
      \leftmargin3pc \rightmargin\leftmargin
      \listparindent\normalparindent \itemindent\z@
      \parsep\z@ \@plus\p@
      
      \itemsep\medskipamount
    }%
}{%
  \endlist\egroup
  \ifx\@setabstract\relax \@setabstracta \fi
}
\newcommand{\abstractin}[1]{%
  \otherlanguage{#1}%
  \item[\hskip\labelsep\scshape\abstractname.]%
}
\title[Sur les $\ell$-blocs de niveau zéro des groupes $p$-adiques]{Sur les $\ell$-blocs de niveau zéro des groupes $p$-adiques}
\author{Thomas Lanard}
\email{thomas.lanard@imj-prg.fr}
\begin{document}

\begin{abstracts}
\abstractin{french}
Soit $G$ un groupe $p$-adique se déployant sur une extension non-ramifiée. Nous décomposons $Rep_{\Lambda}^{0}(G)$, la catégorie abélienne des re\-pré\-sen\-ta\-tions lisses de $G$ de niveau $0$ à coefficients dans $\Lambda=\overline{\mathbb{Q}}_{\ell}$ ou $\overline{\mathbb{Z}}_{\ell}$, en un produit de sous-catégories indexées par des paramètres inertiels à valeurs dans le dual de Langlands. Ces dernières sont construites à partir de systèmes d'idempotents sur l'immeuble de Bruhat-Tits et de la théorie de Deligne-Lusztig. Nous montrons ensuite des compatibilités aux foncteurs d'induction et de restriction paraboliques ainsi qu'à la correspondance de Langlands locale.

\abstractin{english}
Let $G$ be a $p$-adic group that splits over an unramified extension. We decompose $Rep_{\Lambda}^{0}(G)$, the abelian category of smooth level $0$ representations of $G$ with coefficients in $\Lambda=\overline{\mathbb{Q}}_{\ell}$ or $\overline{\mathbb{Z}}_{\ell}$, into a product of subcategories indexed by inertial Langlands parameters. We construct these categories via systems of idempotents on the Bruhat-Tits building and Deligne-Lusztig theory. Then, we prove compatibilities with parabolic induction and restriction functors and the local Langlands correspondence.
\end{abstracts}

\maketitle

\section*{Introduction}
Soient $k$ un corps local non archimédien et $\mathbf{G}$ un groupe réductif connexe défini sur $k$. Notons $G:=\mathbf{G}(k)$. Soit $\ell$ un nombre premier, $\ell\neq p$, et posons $\Lambda=\overline{\mathbb{Q}}_{\ell}$ ou $\overline{\mathbb{Z}}_{\ell}$. On appelle $Rep_{\Lambda}(G)$ la catégorie abélienne des représentations lisses de $G$ à coefficients dans $\Lambda$ et $Rep_{\Lambda}^{0}(G)$ la sous-catégorie pleine des représentations de niveau 0.
\bigskip

Pour $\Lambda=\overline{\mathbb{Q}}_{\ell}$ le théorème de décomposition de Bernstein nous fournit une décomposition de $Rep_{\overline{\mathbb{Q}}_{\ell}}(G)$ en un produit de blocs (c'est à dire de sous-catégories indécomposables). Le cas $\Lambda=\overline{\mathbb{Z}}_{\ell}$ est quant à lui assez peu connu. Pour $\mathbf{G}=GL_{n}$, Vignéras a obtenu dans \cite{vigneras} une décomposition de la catégorie $Rep_{\overline{\mathbb{F}}_{\ell}}(GL_{n}(k))$ en blocs (voir aussi les travaux de Sécherre et Stevens \cite{SecherreStevens}). Celle-ci a permis par la suite à Helm \cite{helm} d'obtenir une décomposition de $Rep_{\overline{\mathbb{Z}}_{\ell}}(GL_{n}(k))$. Prenons une classe d'équivalence inertielle de paires $(L,\pi)$ où $L$ est un Levi de $GL_{n}(k)$ et $\pi$ est une représentation supercuspidale irréductible de $L$ sur $\overline{\mathbb{F}}_{\ell}$. Définissons alors $Rep_{\overline{\mathbb{Z}}_{\ell}}(GL_{n}(k))_{[L,\pi]}$ la sous-catégorie pleine de $Rep_{\overline{\mathbb{Z}}_{\ell}}(GL_{n}(k))$ dont les objets sont les $\Pi$ tels que tous les sous-quotients simples de $\Pi$ ont un "support supercuspidal inertiel mod $\ell$" donné par $(L,\pi)$. Les blocs de $Rep_{\overline{\mathbb{Z}}_{\ell}}(GL_{n}(k))$ sont exactement les sous-catégories $Rep_{\overline{\mathbb{Z}}_{\ell}}(GL_{n}(k))_{[L,\pi]}$.

\bigskip

Ces méthodes sont basées sur "l'unicité du support supercuspidal", qui est vraie pour $GL_{n}$ mais ne l'est pas en général. Un contre exemple dans $Sp_{8}$ sur un corps fini a été trouvé récemment par Dudas puis a été relevé sur un corps $p$-adique par Dat dans \cite{datParabolic}.

\bigskip

Dans le cas du niveau 0, Dat propose (voir \cite{dat_equivalences_2014}) une nouvelle construction des blocs de $GL_{n}(k)$ en utilisant la théorie de Deligne-Lusztig et des systèmes d'idempotents sur l'immeuble de Bruhat-Tits semi-simple (comme dans l'article de Meyer et Solleveld \cite{meyer_resolutions_2010}). Il réinterprète également dans \cite{datFunctoriality} les paramétrisations des décompositions précédentes de $Rep_{\Lambda}(GL_{n}(k))$ en termes "duaux". Introduisons quelques notations pour un énoncé plus précis.

\bigskip

On suppose que $\mathbf{G}$ est déployé sur l'extension non-ramifiée maximale de $k$, c'est à dire que $\mathbf{G}$ est une forme intérieure d'un groupe non-ramifié. Soient $W_{k}$ le groupe de Weil absolu de $k$ et $I_{k}$ le sous-groupe d'inertie. Le groupe $\Gamma:=Gal(\bar{k}/k)/I_{k}$ est topologiquement engendré par le Frobenius inverse $\text{Frob}$. Via le choix d'un épinglage de $\widehat{\mathbf{G}}$, le dual de $\mathbf{G}$ sur $\overline{\mathbb{Q}}_{\ell}$, $\text{Frob}$ induit un automorphisme que l'on nomme $\widehat{\vartheta}$ sur $\widehat{\mathbf{G}}$. On note $\Phi(\textbf{G})$ l'ensemble des morphismes admissibles $\phi : W'_{k} \rightarrow {}^{L}\textbf{G}(\overline{\mathbb{Q}}_{\ell})$ modulo les automorphismes intérieurs par des éléments de $\widehat{\textbf{G}}(\overline{\mathbb{Q}}_{\ell})$, où  $W_{k}'=W_{k}\ltimes \overline{\mathbb{Q}}_{\ell}$ désigne le groupe de Weil-Deligne et ${}^{L}\textbf{G}(\overline{\mathbb{Q}}_{\ell}):=\langle\widehat{\vartheta}\rangle \ltimes\widehat{\textbf{G}}(\overline{\mathbb{Q}}_{\ell})$ le groupe de Langlands dual. Posons $I_{k}^{\Lambda}:=I_{k}$ si $\Lambda = \overline{\mathbb{Q}}_{\ell}$ et $I_{k}^{\Lambda}:=I_{k}^{(\ell)}$, le sous-groupe fermé maximal de $I_{k}$ de pro-ordre premier à $\ell$, si $\Lambda = \overline{\mathbb{Z}}_{\ell}$. Pour $I$ un sous-groupe de $W_{k}$, définissons $\Phi(I,\textbf{G})$ l'ensemble des classes de $\widehat{\textbf{G}}$-conjugaison de morphismes continus $I \rightarrow {}^{L}\textbf{G}(\overline{\mathbb{Q}}_{\ell})$ qui admettent une extension à un $L$-morphisme de $\Phi(\textbf{G})$. On s'intéressera principalement à $\Phi(I_{k}^{\Lambda},\textbf{G})$. Enfin, si $I$ contient l'inertie sauvage, posons $\Phi_{m}(I,\textbf{G})$ les éléments de $\Phi(I,\textbf{G})$ qui sont triviaux sur l'inertie sauvage.

 Il y a alors une bijection entre les blocs de Bernstein pour $GL_{n}(k)$ et les éléments de $\Phi(I_k,GL_{n})$ ainsi qu'entre les blocs de Vignéras-Helm et $\Phi(I_{k}^{(\ell)},GL_{n})$, nous donnant ainsi une décomposition en blocs
\[Rep_{\Lambda}(GL_{n}(k))=\prod_{\phi \in \Phi(I_{k}^{\Lambda},GL_{n})} Rep^{\phi}_{\Lambda}(GL_{n}(k)).\]
De plus les facteurs apparaissant dans $Rep_{\Lambda}^{0}(GL_{n}(k))$, la sous-catégorie de niveau 0, correspondent aux $\phi \in \Phi_{m}(I_{k}^{\Lambda},GL_{n})$.

\bigskip

La méthode de Vignéras ne pouvant s'appliquer dans le cas général, nous nous proposons ici de généraliser la construction de systèmes d'idempotents via la théorie de Deligne-Lusztig pour obtenir des décompositions de $Rep_{\Lambda}^{0}(G)$. Nous obtenons ainsi une décomposition analogue à celle de Dat (voir sections \ref{sectionParamInertie} et \ref{secProprieteRepPhi})

\begin{The*}
Soit $\mathbf{G}$ un groupe réductif connexe défini sur $k$ et déployé sur une extension non-ramifiée de $k$. Alors la catégorie de niveau $0$ se décompose en
\[ Rep_{\Lambda}^{0}(G) = \prod_{\phi \in \Phi_{m}(I_{k}^{\Lambda},\mathbf{G})} Rep_{\Lambda}^{\phi}(G).\]

De plus, les catégories $Rep_{\Lambda}^{\phi}(G)$ vérifient les propriétés suivantes :

\begin{enumerate}
\item Lien entre $\overline{\mathbb{Z}}_{\ell}$ et $\overline{\mathbb{Q}}_{\ell}$ : Soit $\phi \in \Phi_{m}(I_{k}^{(\ell)},\mathbf{G})$, alors $Rep_{\overline{\mathbb{Z}}_{\ell}}^{\phi}(G) \cap Rep_{\overline{\mathbb{Q}}_{\ell}}(G) = \prod_{\phi'} Rep_{\overline{\mathbb{Q}}_{\ell}}^{\phi'}(G)$ où le produit est pris sur les $\phi' \in \Phi_{m}(I_k,\mathbf{G})$ tels que $\phi'_{\mid I_{k}^{(\ell)}} \sim \phi$.

\item Représentations irréductibles de $Rep_{\overline{\mathbb{Q}}_{\ell}}^{\phi}(G)$ : Soit $\pi \in \text{Irr}_{\overline{\mathbb{Q}}_{\ell}}(G)$. Alors $\pi \in Rep_{\overline{\mathbb{Q}}_{\ell}}^{\phi}(G)$ si et seulement s'il existe $\mathbf{T}$ un tore maximal non-ramifié de $\mathbf{G}$, $\phi_{\mathbf{T}} \in \Phi_{m}(I_k,\mathbf{T})$ et $x$ un sommet de l'immeuble de $G$ (sur $k$) qui est dans l'appartement de $\mathbf{T}$ (sur $K$) tels que
\begin{enumerate}
\item $\langle \pi^{G_{x}^{+}}, \mathcal{R}_{\textbf{\textsf{T}}_{x}}^{\overline{\textsf{\textbf{G}}}_{x}}(\theta_{\mathbf{T}}) \rangle \neq 0$
\item $\iota \circ \phi_{\mathbf{T}} \sim \phi$
\end{enumerate}
où $\iota$ est un plongement $\iota : {}^{L}\mathbf{T} \hookrightarrow {}^{L}\mathbf{G}$, $G_{x}^{\circ}$ est le sous-groupe parahorique en $x$, $G_{x}^{+}$ son pro-p-radical, $\overline{\textsf{G}}_{x} \simeq G_{x}^{\circ}/G_{x}^{+}$ le quotient réductif, $\textbf{\textsf{T}}_{x}$ est le tore induit par $\mathbf{T}$ sur $\overline{\textsf{G}}_{x}$, $\mathcal{R}_{\textbf{\textsf{T}}_{x}}^{\overline{\textsf{\textbf{G}}}_{x}}$ est l'induction de Deligne-Lusztig et $\theta_{\mathbf{T}}$ est le caractère de niveau 0 de $\mathbf{T}^{F}$ correspondant à $\phi_{\mathbf{T}}$ via la correspondance de Langlands pour les tores restreinte à l'inertie.\\
(Notons que l'on obtient également une description de $Rep_{\overline{\mathbb{Z}}_{\ell}}^{\phi}(G) \cap \text{Irr}_{\overline{\mathbb{Q}}_{\ell}}(G)$ grâce au $(1)$.)

\item Compatibilité à l'induction et à la restriction parabolique : Soient $\mathbf{P}$ un sous-groupe parabolique de $\mathbf{G}$ ayant pour facteur de Levi $\mathbf{M}$ et $\iota: {}^{L}\mathbf{M} \hookrightarrow  {}^{L}\mathbf{G}$ un plongement.

\begin{enumerate}
\item Soit $\phi_{M} \in \Phi_{m}(I_{k}^{\Lambda},\mathbf{M})$, alors $i_{P}^{G}( Rep_{\Lambda}^{\phi_{M}}(M) ) \subseteq Rep_{\Lambda}^{\iota \circ \phi_{M}}(G)$, où $i_{P}^{G}$ dé\-signe l'induction parabolique.
\item Soit $\phi \in  \Phi_{m}(I_{k}^{\Lambda},\mathbf{G})$, alors $r_{P}^{G}( Rep_{\Lambda}^{\phi}(G) ) \subseteq \prod_{\phi_{M}} Rep_{\Lambda}^{\phi_{M}}(M)$, où $r_{P}^{G}$ dé\-signe la restriction parabolique et le produit est pris sur les $\phi_{M} \in \Phi_{m}(I_{k}^{\Lambda},\mathbf{M})$ tels que $\iota \circ \phi_{M} \sim \phi$.

\item Soit $\phi_{M} \in \Phi_{m}(I_{k}^{\Lambda},\mathbf{M})$. Posons $\phi = \iota \circ \phi_{M} \in  \Phi_{m}(I_{k}^{\Lambda},\mathbf{G})$ et notons $C_{\widehat{\mathbf{G}}}(\phi)$ le centralisateur dans $\widehat{\mathbf{G}}$ de l'image de $\phi$. Alors si $C_{\widehat{\mathbf{G}}}(\phi) \subseteq \iota(\widehat{\mathbf{M}})$ le foncteur $i_{P}^{G}$ réalise une équivalence de catégories entre $Rep_{\Lambda}^{\phi_{M}}(M)$ et $Rep_{\Lambda}^{\phi}(G)$.
\end{enumerate}
\end{enumerate}
\end{The*}

On s'attend à ce que cette décomposition soit compatible à la correspondance de Langlands. Nous le vérifions dans le cas des groupes classiques.

\begin{The*} 

Supposons que $\mathbf{G}$ est un groupe classique non-ramifié, $\Lambda=\overline{\mathbb{Q}}_{\ell}$, $k$ est de caractéristique nulle et de caractéristique résiduelle impaire. On obtient alors également les propriétés suivantes
\begin{enumerate}
\item Compatibilité à la correspondance de Langlands : Soient $\pi \in \text{Irr}_{\overline{\mathbb{Q}}_{\ell}}(G)$ une représentation irré\-ductible de niveau 0 et $\phi \in \Phi_{m}(I_k,\mathbf{G})$. Notons $\varphi_{\pi}$ le paramètre de Langlands associé à $\pi$ via la correspondance de Langlands locale pour les groupes classiques (\cite{HarrisTaylor} \cite{henniart} \cite{arthur} \cite{mok} \cite{KMSW}). Alors

\[ \pi \in Rep_{\overline{\mathbb{Q}}_{\ell}}^{\phi}(G) \Leftrightarrow \varphi_{\pi | I_{k}} \sim \phi \]

\item Blocs stables :  Soit $\phi \in  \Phi_{m}(I_{k},\mathbf{G})$ tel que $C_{\widehat{\mathbf{G}}}(\phi)$ soit connexe. Alors $Rep_{\overline{\mathbb{Q}}_{\ell}}^{\phi}(G)$ est un "bloc stable" (c'est-à-dire, correspond à un idempotent primitif du centre de Bernstein stable au sens de Haines \cite{haines}).
\end{enumerate}
\end{The*}

Contrairement au cas de $GL_{n}$ les catégories $Rep_{\Lambda}^{\phi}(G)$ ne sont pas des facteurs indécomposables en général. Dans un prochain article, en cours d'écriture, nous expliquerons comment pousser la méthode utilisée ici, pour obtenir une décomposition minimale construite à partir de systèmes d'idempotents et de la théorie de Deligne-Lusztig. Nous interpréterons également cette nouvelle décomposition à l'aide d'invariants cohomologiques.

\bigskip

Dans cet article nous nous limitons à l'étude de $Rep_{\Lambda}^{0}(G)$ la catégorie de niveau 0. On peut cependant espérer que les travaux menés sur la théorie des types permettront d'en déduire des résultats sur $Rep_{\Lambda}(G)$. En effet Chinello montre dans \cite{chinello} que l'on a une équivalence de catégories entre chaque bloc de $Rep_{\Lambda}(GL_{n}(k))$ et un bloc de niveau 0 de  $Rep_{\Lambda}^{0}(G')$ où $G'$ est un groupe de la forme $G'=\prod_{i=1}^{r} GL_{n_i}(D_{i})$ avec pour $i \in \{1, \cdots ,r\}$, $n_{i}$ un entier et $D_i$ une algèbre à division centrale de dimension finie sur un corps $p$-adique. Des travaux en cours de Stevens et Kurinczuk tentent d'étendre ces résultats à un $G$ plus général.

\bigskip

Cet article se compose de 4 parties. La partie 1 rappelle les résultats sur les systèmes cohérents d'idempotents. Dans la seconde partie nous expliquons comment construire des idempotents à partir de la théorie de Deligne-Lusztig. Nous associons aux paramètres de l'inertie modérés des systèmes cohérents dans la troisième partie pour obtenir la décomposition du théorème précédent. La dernière partie a pour but de montrer les propriétés des facteurs directs ainsi obtenus. Les quatre premières propriétés découlent de la construction de ces catégories et la dernière repose sur les travaux de Moeglin \cite{moeglin}, Moussaoui \cite{moussaoui}, Haines \cite{haines}, Lust et Stevens \cite{StevensLust}.

\bigskip

\paragraph{\textbf{Remerciements}}
Je tiens à remercier Jean-François Dat pour son aide précieuse concernant la rédaction de cet article.

\tableofcontents

\section*{Notations}

Soit $k$ un corps local non archimédien et $\mathfrak{f}$ son corps résiduel. Notons $q=|\mathfrak{f}|$. On fixe une clôture algébrique $\overline{k}$ de $k$ et on note $K$ l'extension non-ramifiée maximale de $k$ dans $\overline{k}$. On appellera $\mathfrak{o}_{k}$ (resp. $\mathfrak{o}_{K}$) l'anneau des entiers de $k$ (resp. $K$). Notons également $\mathfrak{F}$ le corps résiduel de $K$ qui est alors une clôture algébrique de $\mathfrak{f}$.

On adopte les conventions d'écriture suivantes. $\textbf{G}$ désignera un groupe réductif connexe défini sur $k$ que l'on identifiera avec $\textbf{G}(\overline{k})$ et l'on notera $G:=\textbf{G}(k)$ et $G^{nr}:=\textbf{G}(K)$. On appellera $\widehat{\mathbf{G}}$ son groupe dual sur $\overline{\mathbb{Q}}_{\ell}$. Pour les groupes réductifs connexes sur $\mathfrak{f}$, nous utiliserons la police d'écriture $\textsf{\textbf{G}}$ et l'on identifiera $\textsf{\textbf{G}}$ avec $\textsf{\textbf{G}}(\mathfrak{F})$ et de même on note $\textsf{G}:=\textsf{\textbf{G}}(\mathfrak{f})$. Le groupe dual de $\textbf{G}$ sur $\mathfrak{F}$ sera noté $\textsf{\textbf{G}}^{*}$.

Si $H$ désigne l'ensemble des points d'un groupe algébrique à valeur dans un corps alors $H_{ss}$ désigne l'ensemble des classes de conjugaison semi-simples dans $H$.

On fixe dans tout ce papier un système compatible de racines de l'unité $(\mathbb{Q}/\mathbb{Z})_{p'} \overset{\backsim}{\rightarrow} \mathfrak{F}^{\times}$ et $(\mathbb{Q}/\mathbb{Z})_{p'} \hookrightarrow \overline{\mathbb{Z}}_{\ell}^{\times}$.

Dans la suite $\textbf{G}$ désignera un groupe réductif connexe défini sur $k$.

\section{Système cohérent d'idempotents}

Les diverses décompositions obtenues dans cet article sont construites à partir de systèmes cohérents d'idempotents. Cette partie à pour but de rappeler leur définition ainsi que les premières propriétés.

\bigskip

Soit $K_{1}$ une extension non-ramifiée de $k$. On note $BT(K_{1})$ l'immeuble de Bruhat-Tits semi-simple associé à $\mathbf{G}(K_{1})$. L'immeuble est un complexe polysimplicial et l'on note $BT_{0}(K_{1})$ pour l'ensemble des polysimplexes de dimension 0, c'est à dire les sommets. Dans la suite on utilisera des lettres latines $x$,$y$,$\cdots$ pour parler des sommets et des lettres grecques pour parler des polysimplexes généraux $\sigma$,$\tau$, $\cdots$. L'immeuble $BT(K_{1})$ est partiellement ordonné par la relation d'ordre $\sigma \leq \tau$ si $\sigma$ est une facette de $\tau$. Un ensemble de polysimplexes $\sigma_{1},\cdots,\sigma_{k}$ est dit adjacent s'il existe un polysimplexe $\sigma$ tel que $\forall i \in {1,\cdots,k}$, $\sigma_{i} \leq \sigma$. Si $x$ et $y$ sont deux sommets adjacents on notera $[x,y]$ le plus petit polysimplexe contenant $x\cup y$. Notons également, pour $\sigma$,$\tau$ deux polysimplexes, $H(\sigma,\tau)$ l'enveloppe polysimpliciale de $\sigma$ et $\tau$, c'est à dire l'intersection de tous les appartements contenant $\sigma \cup \tau$.

Pour simplifier les notations nous noterons $BT:=BT(k)$ et $BT_{0}:=BT_{0}(k)$.

\bigskip

Soit $R$ un anneau commutatif dans lequel $p$ est inversible. On munit $G$ d'une mesure de Haar et on note $\mathcal{H}_{R}(G)$ l'algèbre de Hecke à coefficients dans $R$, c'est à dire l'algèbre des fonctions de $G$ dans $R$ localement constantes et à support compact.

\begin{Def}
\label{defcoherent}
On dit qu'un système d'idempotents $e=(e_{x})_{x\in BT_{0}}$ de $\mathcal{H}_{R}(G)$ est cohérent si les propriétés suivantes sont satisfaites :
\begin{enumerate}
\item $e_{x}e_{y}=e_{y}e_{x}$ lorsque $x$ et $y$ sont adjacents.
\item $e_{x}e_{z}e_{y}=e_{x}e_{y}$ lorsque $z \in H(x,y)$ et $z$ est adjacent à $x$.
\item $e_{gx}=ge_{x}g^{-1}$ quel que soit $x\in BT_{0}$ et $g\in G$.
\end{enumerate}
\end{Def}

Soit $Rep_{R}(G)$ la catégorie abélienne des représentations lisses de $G$ à coefficients dans $R$. Grâce à un résultat de Meyer et Solleveld on a :
\begin{The}[\cite{meyer_resolutions_2010}, Thm 3.1]
\label{themoyersolleveld}
Soit $e=(e_{x})_{x\in BT_{0}}$ un système cohérent d'idempotents, alors la sous-catégorie pleine $Rep_{R}^{e}(G)$ des objets $V$ de $Rep_{R}(G)$ tels que $V=\sum_{x\in BT_{0}}e_{x}V$ est une sous-catégorie de Serre.
\end{The}

Soit $\sigma \in BT$. Notons $G_{\sigma}$ le fixateur de $\sigma$. Celui-ci contient un sous-groupe appelé sous-groupe parahorique, que l'on note $G_{\sigma}^{\circ}$, qui est le "fixateur connexe" de $\sigma$. Enfin on note $G_{\sigma}^{+}$ le pro-$p$-radical (pro-$p$-sous-groupe distingué maximal) de $G_{\sigma}^{\circ}$.

Si $x$ est un sommet de l'immeuble $BT$ alors $G_{x}^{+}$ détermine un idempotent $e_{x}^{+} \in \mathcal{H}_{\mathbb{Z}[1/p]}(G_{x})$.

\begin{Pro}[\cite{meyer_resolutions_2010}, Section 2.2]
\label{proexcoherent}
Le système d'idempotents $(e_{x}^{+})_{x\in BT_{0}}$ est cohérent.
\end{Pro}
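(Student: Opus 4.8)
L'idée est de réaliser $e_{x}^{+}$ explicitement, puis de ramener les trois axiomes de la Définition~\ref{defcoherent} à un unique énoncé de théorie des groupes sur les pro-$p$-radicaux, qui résulte alors de la structure de Bruhat--Tits (cf. \cite{meyer_resolutions_2010} et les travaux de Schneider et Stuhler sur les immeubles).

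On fixe d'abord une mesure de Haar sur $G$. Si $x$ est un sommet d'une facette $\sigma$, alors $G_{x}^{+}$ est un sous-groupe ouvert du pro-$p$-groupe $G_{\sigma}^{+}$, donc d'indice une puissance de $p$~; comme $BT$ est connexe pour l'adjacence, on peut normaliser la mesure de sorte que $\mathrm{vol}(G_{x}^{+}) \in \mathbb{Z}[1/p]^{\times}$ pour tout sommet $x$. Alors $e_{x}^{+} := \mathrm{vol}(G_{x}^{+})^{-1}\mathds{1}_{G_{x}^{+}}$ est un idempotent de $\mathcal{H}_{\mathbb{Z}[1/p]}(G)$ — le projecteur sur les vecteurs fixes sous $G_{x}^{+}$ — et l'axiome (3) est immédiat, puisque $G_{gx}^{+} = gG_{x}^{+}g^{-1}$ et que la mesure est $G$-invariante. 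On utilisera aussi le calcul élémentaire $\mathds{1}_{H_{1}}*\mathds{1}_{H_{2}} = \mathrm{vol}(H_{1}\cap H_{2})\,\mathds{1}_{H_{1}H_{2}}$, valable pour $H_{1},H_{2}$ compacts ouverts, qui fournit $e_{H_{1}}*e_{H_{2}} = e_{H_{1}H_{2}}$ dès que $H_{1}H_{2}$ est un sous-groupe.

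Pour l'axiome (1) : si $x$ et $y$ sont adjacents, $y \leq [x,y]$ entraîne $G_{y}^{+} \subseteq G_{[x,y]}^{+} \subseteq G_{[x,y]}^{\circ} \subseteq G_{x}^{\circ}$, et comme $G_{x}^{+}$ est distingué dans $G_{x}^{\circ}$, le produit $G_{x}^{+}G_{y}^{+}$ est un sous-groupe de $G_{x}^{\circ}$~; donc $e_{x}^{+}e_{y}^{+} = e_{G_{x}^{+}G_{y}^{+}} = e_{y}^{+}e_{x}^{+}$. Pour l'axiome (2), tout repose sur le lemme de convexité suivant : \emph{pour tout sommet $z$ de $H(x,y)$, on a $G_{z}^{+} \subseteq G_{x}^{+}G_{y}^{+}$.} L'admettant : puisque $z$ est adjacent à $x$, le produit $G_{x}^{+}G_{z}^{+}$ est un sous-groupe (même argument qu'en (1)), donc $e_{x}^{+}e_{z}^{+} = e_{G_{x}^{+}G_{z}^{+}}$~; le lemme fournit d'une part $(G_{x}^{+}G_{z}^{+})G_{y}^{+} = G_{x}^{+}G_{y}^{+}$, d'autre part — en écrivant tout élément de $G_{z}^{+}$ comme produit d'un élément de $G_{x}^{+}$ par un élément de $G_{y}^{+}\cap G_{x}^{+}G_{z}^{+}$ — l'égalité $G_{x}^{+}G_{z}^{+} = G_{x}^{+}(G_{y}^{+}\cap G_{x}^{+}G_{z}^{+})$, donc $[G_{x}^{+}G_{z}^{+}:G_{x}^{+}] = [G_{y}^{+}\cap G_{x}^{+}G_{z}^{+}:G_{x}^{+}\cap G_{y}^{+}]$~; en reportant ces deux faits dans le calcul de $e_{G_{x}^{+}G_{z}^{+}}*e_{y}^{+}$ et en le comparant à celui de $e_{x}^{+}*e_{y}^{+}$, les scalaires se simplifient et l'on obtient $e_{x}^{+}e_{z}^{+}e_{y}^{+} = e_{x}^{+}e_{y}^{+}$.

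Reste le lemme, qui concentre la difficulté. Le cas où $z$ appartient au segment géodésique $[x,y]$ est facile : on choisit un appartement $A$ contenant $x$ et $y$, de tore déployé maximal $\mathbf{S}$, on note $\mathbf{M}$ le facteur de Levi dont les racines s'annulent sur $x-y$ et $\mathbf{P} = \mathbf{M}\mathbf{U}$ un parabolique associé, d'unipotent opposé $\overline{\mathbf{U}}$~; comme toute racine $a$ est affine sur $A$, $a(z)$ est une combinaison convexe de $a(x)$ et $a(y)$, si bien que, par monotonie des sous-groupes radiciels $U_{a,v}^{+}$ de $G_{v}^{+}$ en fonction de $a(v)$, on a (quitte à orienter convenablement $\mathbf{P}$) $G_{z}^{+}\cap U \subseteq G_{x}^{+}$, $G_{z}^{+}\cap \overline{U} \subseteq G_{y}^{+}$ et $G_{z}^{+}\cap M = G_{x}^{+}\cap M$~; la décomposition d'Iwahori $G_{z}^{+} = (G_{z}^{+}\cap U)(G_{z}^{+}\cap M)(G_{z}^{+}\cap \overline{U})$ donne alors $G_{z}^{+} \subseteq G_{x}^{+}G_{x}^{+}G_{y}^{+} = G_{x}^{+}G_{y}^{+}$. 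Le cas général $z \in H(x,y)$ — qui est bien nécessaire, un sommet de $H(x,y)$ adjacent à $x$ n'étant pas forcément sur $[x,y]$ — utilise la description de $H(x,y)$ comme intersection des demi-appartements radiciels contenant $x$ et $y$ et demande un contrôle nettement plus fin des filtrations de Bruhat--Tits et de leurs relations de commutation (avec une récurrence sur le rang semi-simple pour la partie issue de $\mathbf{M}$)~; c'est là l'obstacle principal, et il est précisément traité dans \cite{meyer_resolutions_2010}, à la suite de Schneider et Stuhler, auquel on renvoie.
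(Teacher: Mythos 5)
Le papier ne démontre pas cette proposition : il la cite telle quelle de \cite{meyer_resolutions_2010} (section 2.2), et votre esquisse en reconstitue correctement l'argument. Vos réductions formelles sont exactes — réalisation de $e_{x}^{+}$ comme projecteur sur les $G_{x}^{+}$-invariants, axiomes (1) et (3), réduction de l'axiome (2) au lemme de convexité $G_{z}^{+}\subseteq G_{x}^{+}G_{y}^{+}$ pour $z\in H(x,y)$ puis le calcul d'indices via $G_{x}^{+}G_{z}^{+}=G_{x}^{+}(G_{y}^{+}\cap G_{x}^{+}G_{z}^{+})$ — et vous renvoyez honnêtement à la référence pour le seul point substantiel (le lemme de convexité hors du segment géodésique, qui repose sur la description de $H(x,y)$ par les demi-espaces radiciels et les décompositions d'Iwahori), exactement comme le papier le fait pour l'énoncé entier.
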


On a de plus que, pour tout polysimplexe $\sigma$, l'idempotent $e_{\sigma}^{+}:=\prod_{x\in \sigma} e_{x}^{+}$ est l'idempotent associé à $G_{\sigma}^{+}$.

\begin{Lem}
\label{lemchemin}
Soient $x,y \in BT_{0}$ deux sommets. Alors il existe une suite de sommets $x_{0}=x,x_{1},\cdots,x_{\ell}=y$ joignant $x$ à $y$, telle que pour tout $i \in\{0,\cdots,l-1\}$, $x_{i+1}$ est adjacent à $x_{i}$ et $x_{i+1} \in H(x_{i},y)$.
\end{Lem}

\begin{proof}
Pour $x_{1}$, il suffit de prendre un sommet dans $H(x,y)$ qui est adjacent à $x$ et tel que la distance de $x_1$ à $y$ est strictement inférieure à celle de $x$ à $y$. En ré-appliquant ce résultat à $x_{1}$ et $y$ on construit $x_{2}$ et ainsi de suite pour obtenir le résultat voulu par récurrence.
\end{proof}

\begin{Def}
On dit qu'un système $(e_{\sigma})_{\sigma\in BT}$ est 0-cohérent si
\begin{enumerate}
\item $e_{gx}=ge_{x}g^{-1}$ quel que soit $x\in BT_{0}$ et $g\in G$.
\item $e_{\sigma}=e_{\sigma}^{+}e_{x}=e_{x}e_{\sigma}^{+}$ pour $x \in BT_{0}$ et $\sigma \in BT$ tels que $x \leq \sigma$.
\end{enumerate}
\end{Def}

En s'inspirant de \cite{dat_equivalences_2014} section 3.2.1 on montre :

\begin{Pro}
\label{prosystemecoherent}
Soit $(e_{\sigma})_{\sigma\in BT}$ un système d'idempotents 0-cohérent, alors le système d'idempotents $(e_{x})_{x \in BT_{0}}$ est cohérent.

On a de plus, pour $x,y \in BT_{0}$, $e_{x}^{+}e_{y}=e_{x}e_{y}$.
\end{Pro}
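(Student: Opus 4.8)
Soit $(e_{\sigma})_{\sigma\in BT}$ un système d'idempotents 0-cohérent ; alors $(e_{x})_{x\in BT_{0}}$ est cohérent, et pour $x,y\in BT_{0}$ on a $e_{x}^{+}e_{y}=e_{x}e_{y}$.

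The plan is to verify, for the system $(e_x)_{x\in BT_0}$, the three axioms of Definition \ref{defcoherent} together with the supplementary identity $e_x^+e_y=e_xe_y$, using only the two defining relations of $0$-coherence, the coherence of the system $(e_\sigma^+)$ (Proposition \ref{proexcoherent}), and the chain lemma \ref{lemchemin}. Axiom (3) is \emph{verbatim} the first axiom of $0$-coherence. For the rest I would first record the formal consequences of the relation $e_\sigma=e_\sigma^+e_x=e_xe_\sigma^+$ (valid for $x\le\sigma$): specializing to $\sigma=x$ a single vertex gives $e_x=e_x^+e_x=e_xe_x^+$; since $x\le\sigma$ forces $G_x^+\subseteq G_\sigma^+$, coherence of $(e_\sigma^+)$ gives $e_x^+e_\sigma^+=e_\sigma^+e_x^+=e_\sigma^+$; and combining these yields that $e_\sigma$ is independent of the vertex $x\le\sigma$ chosen, that $e_\sigma e_\sigma^+=e_\sigma^+e_\sigma=e_\sigma$ and $e_\sigma e_x=e_xe_\sigma=e_\sigma$ for $x\le\sigma$, and more generally $e_\sigma=e_\tau e_\sigma^+=e_\sigma^+e_\tau$ for every face $\tau\le\sigma$.

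Next I would treat the base case of a pair $x,y$ with $[x,y]$ an edge, i.e. $d(x,y)=1$: here $e_{[x,y]}^+=e_x^+e_y^+$, so using $e_x=e_x^+e_x$ and $e_y=e_y^+e_y$ together with the above relations one computes directly $e_x^+e_y=e_{[x,y]}=e_y^+e_x$, hence $e_xe_y=e_xe_{[x,y]}=e_{[x,y]}=e_ye_x$; thus on edges both the supplementary identity and commutativity hold, and $e_xe_y=e_{[x,y]}$. For a general pair $x,y$ one calls on Lemma \ref{lemchemin} for a chain $x=x_0,x_1,\dots,x_\ell=y$ with $x_{i+1}$ adjacent to $x_i$, $x_{i+1}\in H(x_i,y)$ and (by construction) $d(x_{i+1},y)<d(x_i,y)$, refined so that each step $[x_i,x_{i+1}]$ is an edge, and runs an induction on $d(x,y)$ propagating simultaneously the supplementary identity $e_x^+e_y=e_xe_y$, axiom (2) of coherence ($e_xe_ze_y=e_xe_y$ for $z\in H(x,y)$ adjacent to $x$), and axiom (1). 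The inductive step combines the base case for $(x,x_1)$, the induction hypothesis for $(x_1,y)$, and — the crucial external ingredient — axiom (2) for the already-coherent system $(e_\sigma^+)$ applied to $x_1\in H(x,y)$ (Proposition \ref{proexcoherent}), which is what allows one to pass back and forth between the $e$'s and the $e^+$'s; the supplementary identity for arbitrary $x,y$ is then simply the $d(x,y)$-level output of the induction.

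I expect the real obstacle to be closing this induction — concretely, proving the absorption relation $e_xe_ze_y=e_xe_y$ for $z$ on a minimal chain, and, in the polysimplicial case where $[x,y]$ may have dimension $>1$, the corresponding commutativity. The defining relations of a $0$-coherent system are symmetric enough that purely formal manipulation only delivers these identities up to a square-zero error term; removing that term requires genuine input — either the convexity of the hulls $H(\cdot,\cdot)$, packaged in Lemma \ref{lemchemin}, or the positive anti-involution $f\mapsto f^{*}$ on $\mathcal{H}_R(G)$, for which the $e_\sigma$ and $e_\sigma^+$ arising here are self-adjoint, to upgrade nilpotence to vanishing. This part is carried out following \cite{dat_equivalences_2014}, section 3.2.1.
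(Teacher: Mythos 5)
Your overall route is the paper's: axiom (3) of Definition \ref{defcoherent} is axiom (1) of $0$-coherence verbatim, the adjacent case gives $e_xe_y=e_{[x,y]}$ (hence commutativity), and the general identity $e_x^+e_y=e_xe_y$ is obtained by telescoping along the chain of Lemma \ref{lemchemin} using the coherence of $(e_\sigma^+)$. The gap sits exactly where you flag ``the real obstacle'': the absorption relation $e_xe_ze_y=e_xe_y$ (axiom (2)) is never actually established, and your closing paragraph claims that formal manipulation only yields it up to a square-zero error term, so that convexity beyond Lemma \ref{lemchemin}, or a positive anti-involution on $\mathcal{H}_R(G)$, would be needed to finish. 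Neither is needed and no error term arises. The paper proves axiom (2) for \emph{arbitrary} $x,y$ (no induction on distance, no chain) by the same sandwiching you already use in the base case: write $e_xe_y=e_x(e_x^+e_y^+)e_y$ using $e_x=e_xe_x^+$ and $e_y=e_y^+e_y$, insert $e_z^+$ in the middle via the coherence of the plus-system ($e_x^+e_y^+=e_x^+e_z^+e_y^+$ since $z\in H(x,y)$ is adjacent to $x$), regroup $e_xe_x^+e_z^+=e_xe_{[x,z]}^+=e_{[x,z]}=e_xe_z$ by $0$-coherence and the adjacent case, and reabsorb $e_y^+e_y=e_y$; this gives $e_xe_y=e_xe_ze_y$ exactly. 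Getting axiom (2) unconditionally \emph{before} running the chain argument is not optional: as you set it up, the inductive step for $e_x^+e_y=e_xe_y$ at distance $d(x,y)$ ends with $e_xe_{x_1}e_y$ and needs axiom (2) at that same distance to conclude, so the ``simultaneous induction'' would be circular. With axiom (2) proved first, the telescoping along $x=x_0,x_1,\dots,x_\ell=y$ closes as you describe.
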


\begin{proof}
Il ne reste à vérifier que les conditions 1. et 2. de la définition \ref{defcoherent}.

Commençons par vérifier la propriété 1. de \ref{defcoherent}:
Soient $x$ et $y$ deux sommets adjacents et $\sigma$ le polysimplexe $[x,y]$. On sait déjà que $e_{x}^{+}e_{y}^{+}=e_{\sigma}^{+}=e_{\sigma}^{+}e_{\sigma}^{+}$. Ainsi
\[e_{x}e_{y}=e_{x}e_{x}^{+}e_{y}^{+}e_{y}=e_{x}e_{\sigma}^{+}e_{\sigma}^{+}e_{y}=e_{\sigma}e_{\sigma}=e_{\sigma}\]
Ce qui montre la propriété 1. de \ref{defcoherent}.

Examinons maintenant la propriété 2. de \ref{defcoherent}:
Soient $x$,$y$ et $z$ des sommets de $BT$ tels que $z$ soit dans l'enveloppe polysimpliciale de $\{x,y\}$ et $z$ adjacent à $x$. Par la proposition \ref{proexcoherent} on sait que $e_{x}^{+}e_{y}^{+}=e_{x}^{+}e_{z}^{+}e_{y}^{+}$. Par ce qui précède $e_{[x,z]}=e_{x}e_{z}$ et on a
\[ e_{x}e_{y}=e_{x}e_{x}^{+}e_{y}^{+}e_{y}=e_{x}e_{x}^{+}e_{z}^{+}e_{y}^{+}e_{y}=e_{x}e_{[x,z]}^{+}e_{y}=e_{[x,z]}e_{y}=e_{x}e_{z}e_{y}\]
Le système d'idempotents $(e_{x})_{x \in BT_{0}}$ est cohérent.

\bigskip

Montrons maintenant que $e_{x}^{+}e_{y}=e_{x}e_{y}$.\\
Si $x$ et $y$ sont adjacents on obtient que

\[ e_{x}^{+}e_{y}=e_{x}^{+}e_{y}^{+}e_{y}=e_{[x,y]}^{+}e_{y}=e_{[x,y]}=e_{x}e_{y}\]
Dans le cas général choisissons grâce au lemme \ref{lemchemin} une suite de sommets $x_{0}=x,x_{1},\cdots,x_{\ell}=y$ joignant $x$ à $y$, telle que pour tout $i \in\{0,\cdots,l-1\}$, $x_{i+1}$ est adjacent à $x_{i}$ et $x_{i+1} \in H(x_{i},y)$. Alors
\begin{align*}
e_{x}^{+}e_{y}&=e_{x}^{+}e_{y}^{+}e_{y}=e_{x}^{+}e_{x_{1}}^{+}e_{y}^{+}e_{y}=\cdots=e_{x}^{+}e_{x_{1}}^{+}\cdots e_{x_{l-1}}^{+}e_{y}^{+}e_{y}\\
&=e_{x}^{+}e_{x_{1}}^{+}\cdots e_{x_{l-1}}^{+}e_{y}=e_{x}^{+}e_{x_{1}}^{+}\cdots e_{x_{l-1}}e_{y}=\cdots=e_{x}e_{x_{1}}\cdots e_{x_{l-1}}e_{y}\\
&=e_{x}e_{x_{1}}\cdots e_{x_{l-2}}e_{y}=\cdots=e_{x}e_{x_{1}}e_{y}=e_{x}e_{y}
\end{align*}
(La première ligne découle de la propriété 2. de \ref{defcoherent} appliquée aux $e_{x}^{+}$. Pour la seconde, on utilise le fait que $e_{x_{i}}^{+}e_{x_{i+1}}=e_{x_{i}}e_{x_{i+1}}$ car $x_{i}$ et $x_{i+1}$ sont adjacents. Enfin pour la dernière on applique que $e_{x_{i}}e_{x_{i+1}}e_{y}=e_{x_{i}}e_{y}$ car $x_{i+1}$ est adjacent à $x_{i}$ et $x_{i+1} \in H(x_{i},y)$.)

Ainsi
\[ \forall x,y \in BT_{0}, e_{x}^{+}e_{y}=e_{x}e_{y}\]
\end{proof}

\section{Construction d'idempotents}

Nous expliquons ici comment construire des idempotents sur l'immeuble à partir de la théorie de Deligne-Lusztig ainsi que les conditions qu'ils doivent vérifier pour obtenir un système cohérent.

\subsection{Théorie de Deligne-Lusztig}

\label{secDeligneLusztig}

Rappelons brièvement le fonctionnement de la théorie de Deligne-Lusztig. Dans cette section les groupes algébriques considérés seront sur $\mathfrak{F}$ et $\ell$ est un nombre premier différent de $p$.

\bigskip

Soit $\textsf{\textbf{G}}$ un groupe réductif défini sur $\mathfrak{f}$. Soit $\textsf{\textbf{P}}$ un sous-groupe parabolique de radical unipotent $\textsf{\textbf{U}}$ et supposons que $\textsf{\textbf{P}}$ contienne un Levi $F$-stable $\textsf{\textbf{L}}$. On associe alors à ces données la variété de Deligne-Lusztig $Y_{\textsf{\textbf{P}}}$ définie par
\[ Y_{\textsf{\textbf{P}}}=\{ g\textsf{\textbf{U}} \in \textsf{\textbf{G}}/\textsf{\textbf{U}}, g^{-1}F(g) \in \textsf{\textbf{U}}F(\textsf{\textbf{U}})\}\]
C'est une variété définie sur $\mathfrak{F}$ avec une action à gauche de $\mathsf{G}:=\textsf{\textbf{G}}^{F}$ donnée par $(\gamma,g\textsf{\textbf{U}})\mapsto\gamma g \textsf{\textbf{U}}$ et une action à droite de $\mathsf{L}:=\textsf{\textbf{L}}^{F}$ donnée par $(g \textsf{\textbf{U}},\delta)\mapsto g\delta \textsf{\textbf{U}}$. Le complexe cohomologique $R\Gamma_{c}(Y_{\textsf{\textbf{P}}},\overline{\mathbb{Z}}_{\ell})$ est alors un complexe de $(\overline{\mathbb{Z}}_{\ell}[G],\overline{\mathbb{Z}}_{\ell}[L])$-bimodules et induit deux foncteurs adjoints
\[ \mathcal{R}_{\textsf{\textbf{L}} \subset \textsf{\textbf{P}}}^{\textsf{\textbf{G}}}=R\Gamma_{c}(Y_{\textsf{\textbf{P}}},\overline{\mathbb{Z}}_{\ell}) \otimes_{\overline{\mathbb{Z}}_{\ell}[L]} - : D^{b}(\overline{\mathbb{Z}}_{\ell}[L]) \longrightarrow D^{b}(\overline{\mathbb{Z}}_{\ell}[G])\]
\[ {}^{*}\mathcal{R}_{\textsf{\textbf{L}} \subset \textsf{\textbf{P}}}^{\textsf{\textbf{G}}}=R\text{Hom}_{\overline{\mathbb{Z}}_{\ell}[G]}(R\Gamma_{c}(Y_{\textsf{\textbf{P}}},\overline{\mathbb{Z}}_{\ell}) , -) : D^{b}(\overline{\mathbb{Z}}_{\ell}[G]) \longrightarrow D^{b}(\overline{\mathbb{Z}}_{\ell}[L])\]
où $D^{b}$ signifie "catégorie dérivée bornée".

\bigskip

Nous avons fixé des systèmes compatibles de racines de l'unité $(\mathbb{Q}/\mathbb{Z})_{p'} \overset{\backsim}{\longrightarrow} \mathfrak{F}^{\times}$ et $(\mathbb{Q}/\mathbb{Z})_{p'} \hookrightarrow \overline{\mathbb{Z}}_{\ell}^{\times}$. Si $\textsf{\textbf{T}}$ est un tore défini sur $\mathfrak{f}$ et $\textsf{\textbf{T}}^{*}$ est son tore dual, aussi défini sur $\mathfrak{f}$, alors les deux applications précédentes permettent de définir une bijection $\textsf{\textbf{T}}^{*F}\rightarrow \text{Hom}(T,\overline{\mathbb{Z}}_{\ell}^{\times})$. Ainsi un élément $t\in \textsf{\textbf{T}}^{*F}$ détermine un caractère $\widehat{t}:T \rightarrow \overline{\mathbb{Z}}_{\ell}^{\times}$.

Soit $s$ une classe de conjugaison semi-simple dans $\mathsf{G}^{*}:=\textsf{\textbf{G}}^{*F}$. Une représentation irréductible $\pi \in \text{Irr}_{\overline{\mathbb{Q}}_{\ell}}(\mathsf{G})$ appartient à la série rationnelle attachée à $s$ s'il existe un tore $F$-stable $\textsf{\textbf{T}}$ dans $\textsf{\textbf{G}}$, un élément $t\in\textsf{\textbf{T}}^{*F}$ qui appartienne à $s$ et un Borel $\textsf{\textbf{B}}$ contenant $\textsf{\textbf{T}}$ tel que $\pi$ apparaisse avec une multiplicité non nulle dans $[\mathcal{R}_{\textsf{\textbf{T}} \subset \textsf{\textbf{B}}}^{\textsf{\textbf{G}}}(\widehat{t})]$ (où la notation $[\cdot]$ signifie que l'on prend l'image du complexe dans le groupe de Grothendieck). On note alors $\mathcal{E}(\mathsf{G},s)$ l'ensemble des telles séries rationnelles et par $e_{s,\overline{\mathbb{Q}}_{\ell}}^{\mathsf{G}} \in \overline{\mathbb{Q}}_{\ell}[\textsf{G}]$ l'idempotent central les sélectionnant.

\begin{Pro}
\label{prodelignelusztig}

\begin{enumerate}
\item (\cite{cabanes_enguehard} théorème 8.23) Nous avons $1=\sum_{s} e_{s,\overline{\mathbb{Q}}_{\ell}}^{\mathsf{G}}$ dans $\overline{\mathbb{Q}}_{\ell}[\mathsf{G}]$.
\item (\cite{bonnafe_rouquier} théorème A' et remarque 11.3) Si $s$ se compose d'éléments $\ell$-réguliers, alors nous avons un idempotent $e_{s,\overline{\mathbb{Z}}_{\ell}}^{\mathsf{G}}=\sum_{s' \backsim_{\ell} s} e_{s',\overline{\mathbb{Q}}_{\ell}}^{\mathsf{G}} \in \overline{\mathbb{Z}}_{\ell}[\mathsf{G}]$, où $s' \backsim_{\ell} s$ signifie que $s$ est la partie $\ell$-régulière de $s'$.
\end{enumerate}
\end{Pro}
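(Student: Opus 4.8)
Les deux points sont des reformulations de résultats classiques sur les groupes réductifs finis ; j'en rappelle l'architecture plutôt que d'en redonner une preuve complète.

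Pour (1), le point de départ est la partition de $\text{Irr}_{\overline{\mathbb{Q}}_{\ell}}(\mathsf{G})$ en séries de Lusztig rationnelles indexées par les classes de conjugaison semi-simples de $\mathsf{G}^{*}$. Elle repose sur deux faits : tout caractère irréductible apparaît avec multiplicité non nulle dans un $\mathcal{R}_{\textsf{\textbf{T}}\subset\textsf{\textbf{B}}}^{\textsf{\textbf{G}}}(\widehat{t})$ (couverture par les foncteurs de Deligne-Lusztig), et si deux paires $(\textsf{\textbf{T}},\widehat{t})$, $(\textsf{\textbf{T}}',\widehat{t}')$ contribuent toutes deux à un même $\pi$ alors $t$ et $t'$ sont conjugués sous $\mathsf{G}^{*}$ (relations d'orthogonalité et description duale de la conjugaison géométrique des paires). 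C'est le théorème 8.23 de \cite{cabanes_enguehard}. Comme $\overline{\mathbb{Q}}_{\ell}$ est de caractéristique nulle, $\overline{\mathbb{Q}}_{\ell}[\mathsf{G}]$ est semi-simple ; en regroupant les idempotents centraux primitifs suivant la série rationnelle de la représentation associée on obtient les $e_{s,\overline{\mathbb{Q}}_{\ell}}^{\mathsf{G}}$, et la relation $1=\sum_{s}e_{s,\overline{\mathbb{Q}}_{\ell}}^{\mathsf{G}}$ exprime simplement que les $\mathcal{E}(\mathsf{G},s)$ sont disjointes et recouvrent $\text{Irr}_{\overline{\mathbb{Q}}_{\ell}}(\mathsf{G})$.

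Pour (2), l'énoncé n'est plus formel : il affirme que la somme partielle $\sum_{s'\backsim_{\ell}s}e_{s',\overline{\mathbb{Q}}_{\ell}}^{\mathsf{G}}$ est en fait \emph{entière}, c'est-à-dire dans $\overline{\mathbb{Z}}_{\ell}[\mathsf{G}]$ et pas seulement dans $\overline{\mathbb{Q}}_{\ell}[\mathsf{G}]$. Le plan est d'invoquer le théorème de Bonnafé-Rouquier (théorème A' et remarque 11.3 de \cite{bonnafe_rouquier}, prolongeant Broué-Michel) : pour $s$ semi-simple $\ell$-régulier, la réunion $\bigcup_{s'\backsim_{\ell}s}\mathcal{E}(\mathsf{G},s')$ des séries rationnelles de partie $\ell$-régulière $s$ est une réunion de $\ell$-blocs de $\mathsf{G}$. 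L'idempotent central de $\overline{\mathbb{Z}}_{\ell}[\mathsf{G}]$ associé à une réunion de $\ell$-blocs étant automatiquement dans $\overline{\mathbb{Z}}_{\ell}[\mathsf{G}]$, et sélectionnant en caractéristique nulle exactement les représentations des $\mathcal{E}(\mathsf{G},s')$, $s'\backsim_{\ell}s$, il coïncide avec $\sum_{s'\backsim_{\ell}s}e_{s',\overline{\mathbb{Q}}_{\ell}}^{\mathsf{G}}$ ; d'où l'intégralité et la légitimité de la notation $e_{s,\overline{\mathbb{Z}}_{\ell}}^{\mathsf{G}}$. L'obstacle principal est donc cette affirmation géométrique — que les réunions de séries rationnelles indexées par la $\ell$-classe de $s$ sont des réunions de $\ell$-blocs — dont la preuve, que je prendrai telle quelle dans la littérature, passe par l'étude du comportement des foncteurs $\mathcal{R}_{\textsf{\textbf{L}}\subset\textsf{\textbf{P}}}^{\textsf{\textbf{G}}}$ modulo $\ell$ (et, dans la version de Bonnafé-Rouquier, par une équivalence de Morita réalisant une décomposition de Jordan des blocs) ; tout le reste est standard.
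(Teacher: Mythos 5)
Votre proposition est correcte et suit exactement la même démarche que l'article, qui ne donne d'ailleurs aucune démonstration de cette proposition mais renvoie simplement à \cite{cabanes_enguehard} pour la partition en séries rationnelles et à \cite{bonnafe_rouquier} (après Broué--Michel) pour l'intégralité de la somme des idempotents indexés par la $\ell$-classe de $s$. Votre rappel de l'architecture des deux résultats (semi-simplicité de $\overline{\mathbb{Q}}_{\ell}[\mathsf{G}]$ et regroupement des idempotents primitifs pour (1), réunion de $\ell$-blocs pour (2)) est fidèle et n'appelle pas de correction.
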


\label{sectiondelignelusztig}

Soit $\textsf{\textbf{L}}$ un Levi $F$-stable de $\textsf{\textbf{G}}$ contenu dans un parabolique $\textsf{\textbf{P}}$. Une classe de conjugaison $t$ dans $\textsf{L}^{*}$ donne une classe de conjugaison $s$ dans $\textsf{G}^{*}$. Ainsi nous avons une application $\varphi_{\textsf{L}^{*},\textsf{G}^{*}}$ à fibres finies définie par
\[\begin{array}{ccccc}
\varphi_{\textsf{L}^{*},\textsf{G}^{*}} & : & \textsf{L}^{*}_{ss} & \to & \textsf{G}^{*}_{ss} \\
 & & t & \mapsto & s
\end{array}\]

Soit $\Lambda= \overline{\mathbb{Q}}_{\ell}$ ou $\overline{\mathbb{Z}}_{\ell}$. Construisons alors un idempotent $e_{s,\Lambda}^{\mathsf{L}}:=\sum_{t \in \varphi_{\textsf{L}^{*},\textsf{G}^{*}}^{-1}(s)} e_{t,\Lambda}^{\mathsf{L}}$.

Dans le cas où $\textsf{\textbf{P}}$ est lui-même $F$-stable, on note $\textsf{\textbf{U}}$ le radical unipotent de $\textsf{\textbf{P}}$ et $\textsf{U}=\textsf{\textbf{U}}^{F}$. Notons $e_{\mathsf{U}}:=\frac{1}{|\mathsf{U}|} \sum_{x\in \mathsf{U}} \mathds{1}_{x}$ l'idempotent réalisant la moyenne sur $\mathsf{U}$. Alors on a

\begin{Pro}[\cite{dat_equivalences_2014} section 2.1.4]
\label{proidemparabolique}
\[ e_{s,\Lambda}^{\mathsf{G}}e_{\mathsf{U}}=e_{\mathsf{U}}e_{s,\Lambda}^{\mathsf{L}}=:e_{s,\Lambda}^{\mathsf{P}}\]
où $e_{s,\Lambda}^{\mathsf{P}}$ est un idempotent central dans  $\Lambda[\mathsf{P}]$.
\end{Pro}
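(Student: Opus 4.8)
Le plan est le suivant : on se ramène d'abord à l'identité
\[ e_{s,\Lambda}^{\mathsf{G}}\,e_{\mathsf{U}} = e_{\mathsf{U}}\,e_{s,\Lambda}^{\mathsf{L}} \]
dans $\Lambda[\mathsf{G}]$, puis on en déduit que la valeur commune, notée $e_{s,\Lambda}^{\mathsf{P}}$, est un idempotent central de $\Lambda[\mathsf{P}]$. Pour ce dernier point, on remarquera que $|\mathsf{U}|$ est une puissance de $p$, donc inversible dans $\Lambda$, et que $\mathsf{L}$ normalise $\mathsf{U}$ ; il en résulte que $e_{\mathsf{U}}$ est un idempotent central de $\Lambda[\mathsf{P}]$, qui commute donc à $e_{s,\Lambda}^{\mathsf{L}}\in\Lambda[\mathsf{L}]\subseteq\Lambda[\mathsf{P}]$. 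Le membre de droite $e_{\mathsf{U}}e_{s,\Lambda}^{\mathsf{L}}$ appartient alors manifestement à $\Lambda[\mathsf{P}]$, est idempotent (produit de deux idempotents qui commutent), et pour $x = \ell u\in\mathsf{P}$ ($\ell\in\mathsf{L}$, $u\in\mathsf{U}$) un calcul direct utilisant $ue_{\mathsf{U}} = e_{\mathsf{U}} = e_{\mathsf{U}}u$, $\ell e_{\mathsf{U}}\ell^{-1} = e_{\mathsf{U}}$ et la centralité de $e_{s,\Lambda}^{\mathsf{L}}$ dans $\Lambda[\mathsf{L}]$ donnera $x\,(e_{\mathsf{U}}e_{s,\Lambda}^{\mathsf{L}}e_{\mathsf{U}})\,x^{-1} = e_{\mathsf{U}}e_{s,\Lambda}^{\mathsf{L}}e_{\mathsf{U}} = e_{\mathsf{U}}e_{s,\Lambda}^{\mathsf{L}}$ ; comme les $x\in\mathsf{P}$ engendrent $\Lambda[\mathsf{P}]$, ceci établira la centralité. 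Tout repose donc sur l'identité encadrée. Posons $\varphi := \varphi_{\textsf{L}^{*},\textsf{G}^{*}}$ dans la suite.

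Pour $\Lambda = \overline{\mathbb{Q}}_\ell$, j'utiliserais la semi-simplicité de $\overline{\mathbb{Q}}_\ell[\mathsf{G}]$ : il suffit de vérifier que les deux membres induisent le même endomorphisme de tout $\overline{\mathbb{Q}}_\ell[\mathsf{G}]$-module simple $V$. Or $e_{\mathsf{U}}$ agit comme le projecteur sur $V^{\mathsf{U}}$, que $\mathsf{U}$ fixe et que $\mathsf{L}$ stabilise, donc un $\mathsf{L}$-module ; $e_{s,\overline{\mathbb{Q}}_\ell}^{\mathsf{G}}$ agit comme l'identité si $V\in\mathcal{E}(\mathsf{G},s)$ et comme $0$ sinon ; et $e_{s,\overline{\mathbb{Q}}_\ell}^{\mathsf{L}} = \sum_{t\in\varphi^{-1}(s)} e_{t,\overline{\mathbb{Q}}_\ell}^{\mathsf{L}}$ agit, via $e_{\mathsf{U}}$ et le fait que ces deux éléments commutent, comme le projecteur sur le plus grand sous-$\mathsf{L}$-module de $V^{\mathsf{U}}$ dont les constituants appartiennent à $\bigcup_{t\in\varphi^{-1}(s)}\mathcal{E}(\mathsf{L},t)$. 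Comme les $\mathcal{E}(\mathsf{L},t)$ associés à des fibres distinctes de $\varphi$ sont deux à deux disjointes, l'identité cherchée équivaut à : pour tout $\overline{\mathbb{Q}}_\ell[\mathsf{G}]$-module simple $V$, si $V\in\mathcal{E}(\mathsf{G},s')$ alors tous les constituants du $\mathsf{L}$-module $V^{\mathsf{U}}$ appartiennent à $\bigcup_{t\in\varphi^{-1}(s')}\mathcal{E}(\mathsf{L},t)$. Puisque $\textsf{\textbf{P}}$ est $F$-stable, ${}^{*}\mathcal{R}_{\textsf{\textbf{L}}\subset\textsf{\textbf{P}}}^{\textsf{\textbf{G}}}$ est concentré en degré $0$ et s'identifie à la restriction de Harish-Chandra $V\mapsto V^{\mathsf{U}}$ ; cet énoncé n'est autre que la compatibilité de l'induction/restriction de Deligne-Lusztig aux séries rationnelles de Lusztig (Broué-Michel ; voir \cite{bonnafe_rouquier}, resp. \cite{cabanes_enguehard}) appliquée au triplet $\textsf{\textbf{L}}\subset\textsf{\textbf{P}}\subset\textsf{\textbf{G}}$ — l'idempotent $\sum_{t\in\varphi^{-1}(s)} e_{t,\overline{\mathbb{Q}}_\ell}^{\mathsf{L}}$ y intervenant étant précisément notre $e_{s,\overline{\mathbb{Q}}_\ell}^{\mathsf{L}}$.

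Pour $\Lambda = \overline{\mathbb{Z}}_\ell$, la classe $s$ est formée d'éléments $\ell$-réguliers. L'application $\varphi$ commutant au passage à la partie $\ell$-régulière, la préimage par $\varphi$ d'une classe $\ell$-régulière est formée de classes $\ell$-régulières, et la proposition \ref{prodelignelusztig}(2), appliquée à $\mathsf{G}$ et (via les fibres de $\varphi$) à $\mathsf{L}$, donne $e_{s,\overline{\mathbb{Z}}_\ell}^{\mathsf{G}} = \sum_{s'\backsim_\ell s} e_{s',\overline{\mathbb{Q}}_\ell}^{\mathsf{G}}$ et $e_{s,\overline{\mathbb{Z}}_\ell}^{\mathsf{L}} = \sum_{s'\backsim_\ell s} e_{s',\overline{\mathbb{Q}}_\ell}^{\mathsf{L}}$ dans les algèbres de groupes sur $\overline{\mathbb{Q}}_\ell$ (pour la seconde égalité, on ré-indexe la double somme sur $\varphi^{-1}$ et sur $\backsim_\ell$). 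En multipliant l'identité déjà obtenue sur $\overline{\mathbb{Q}}_\ell$ par $e_{\mathsf{U}}$ et en sommant sur les $s'\backsim_\ell s$, on obtient $e_{s,\overline{\mathbb{Z}}_\ell}^{\mathsf{G}}e_{\mathsf{U}} = e_{\mathsf{U}}e_{s,\overline{\mathbb{Z}}_\ell}^{\mathsf{L}}$ ; cet élément appartient à $\overline{\mathbb{Z}}_\ell[\mathsf{P}]$ car $e_{\mathsf{U}}\in\overline{\mathbb{Z}}_\ell[\mathsf{P}]$ ($p$ étant inversible dans $\overline{\mathbb{Z}}_\ell$) et $e_{s,\overline{\mathbb{Z}}_\ell}^{\mathsf{L}}\in\overline{\mathbb{Z}}_\ell[\mathsf{L}]$, et l'argument de centralité du premier paragraphe s'applique tel quel.

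Le seul ingrédient non formel est la compatibilité de Broué-Michel des foncteurs de Deligne-Lusztig aux séries rationnelles de Lusztig, ici dans son cas le plus classique (parabolique $F$-stable, c'est-à-dire induction/restriction de Harish-Chandra) ; une fois celle-ci admise, le reste n'est qu'un jeu d'idempotents. On pourrait être tenté de la redémontrer "à la main" au moyen de la transitivité de l'induction de Harish-Chandra et de la description de $\mathcal{E}(\mathsf{G},s)$ via les $\mathcal{R}_{\textsf{\textbf{T}}\subset\textsf{\textbf{B}}}^{\textsf{\textbf{G}}}(\widehat{t})$, mais l'argument naïf ne contrôle que les constituants "visibles" après induction et n'exclut pas d'éventuelles compensations dans le groupe de Grothendieck, ce qui rend nécessaire le recours au théorème cité.
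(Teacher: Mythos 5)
Votre démonstration est correcte ; notez toutefois que l'article ne démontre pas cette proposition, qui est importée telle quelle de la section 2.1.4 de \cite{dat_equivalences_2014}. L'argument que vous donnez — réduction, par semi-simplicité de $\overline{\mathbb{Q}}_{\ell}[\mathsf{G}]$, à la compatibilité de Broué--Michel des séries rationnelles avec la restriction de Harish-Chandra $V \mapsto V^{\mathsf{U}}$, puis sommation sur les classes $s' \backsim_{\ell} s$ via la proposition \ref{prodelignelusztig}(2) pour le cas $\overline{\mathbb{Z}}_{\ell}$, et enfin le jeu d'idempotents donnant la centralité dans $\Lambda[\mathsf{P}]$ — est essentiellement celui de la référence citée, et votre vérification que $\varphi_{\textsf{L}^{*},\textsf{G}^{*}}$ commute au passage à la partie $\ell$-régulière est bien le point à contrôler pour la ré-indexation de la double somme.
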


Soient $\textsf{\textbf{G}}'$ un autre groupe réductif défini sur $\mathfrak{f}$ et $\varphi : \textsf{\textbf{G}} \rightarrow \textsf{\textbf{G}}'$ un isomorphisme compatible avec les $F$-structures. Alors $\varphi$ induit une bijection (voir annexe \ref{sectionRappelsGD})

\[ \varphi^{*} : \mathsf{G}^{*}_{ss} \longrightarrow \mathsf{G}'^{*}_{ss}\]

\begin{Lem}
\label{lemphiidempotent}
On a
\[ \varphi(e_{s,\Lambda}^{\mathsf{G}})=e_{\varphi^{*}(s),\Lambda}^{\mathsf{G}'}\]
\end{Lem}

\begin{proof}
Par construction un élément $t\in\textsf{\textbf{T}}^{*F}$ appartenant à $s$ est envoyé sur $\varphi^{*}(t)$ appartenant à $\varphi^{*}(s)$. Ainsi $\varphi$ envoie $\mathcal{E}(\mathsf{G},s)$ sur $\mathcal{E}(\mathsf{G}',\varphi^{*}(s))$ et on a le résultat.
\end{proof}

\subsection{Construction d'idempotents sur l'immeuble}

Maintenant que l'on sait fabriquer des idempotents sur les groupes finis, il nous faut les relever en des idempotents sur le groupe $p$-adique. On utilise pour cela le fait que les sous-groupes parahoriques $G_{\sigma}^{\circ}$ admettent un modèle entier et que le quotient $G_{\sigma}^{\circ}/G_{\sigma}^{+}$ est alors l'ensemble des points d'un groupe réductif connexe à valeur dans un corps fini.

\bigskip

Soit $\sigma$ un polysimplexe dans $BT$. D'après \cite{tits_reductive} section 3.4 il existe un schéma en groupes affine lisse $\mathcal{G}_{\sigma}$ défini sur $\mathfrak{o}_{k}$, unique à isomorphisme près, tel que
\begin{enumerate}
\item La fibre générique $\mathcal{G}_{\sigma,k}$ de $\mathcal{G}_{\sigma}$ est $G$
\item Pour toute extension galoisienne non-ramifiée $K_{1}$ de $k$, $\mathcal{G}_{\sigma}(\mathfrak{o}_{K_{1}})$ est le sous-groupe compact maximal de $\mathbf{G}(K_{1})_{\sigma}$, où $\sigma$ est identifié avec son image canonique dans $BT(K_{1})$.
\end{enumerate}

L'application de réduction modulo $p$ fournit un morphisme surjectif $G_{\sigma}=\mathcal{G}_{\sigma}(\mathfrak{o}_{k}) \rightarrow \tilde{\mathsf{G}}_{\sigma}:=\tilde{\textbf{\textsf{G}}}_{\sigma}^{F}$, où $\tilde{\textsf{\textbf{G}}}_{\sigma}$, la fibre spéciale de $\mathcal{G}_{\sigma}$, est un groupe algébrique défini sur $\mathfrak{f}$. On note $\mathcal{G}_{\sigma}^{\circ}$ la composante neutre de $\mathcal{G}_{\sigma}$ et $\tilde{\textsf{\textbf{G}}}_{\sigma}^{\circ}$ celle de $\tilde{\textsf{\textbf{G}}}_{\sigma}$. D'après \cite{BT} section 5.2.6, on a $G_{\sigma}^{\circ}=\mathcal{G}_{\sigma}^{\circ}(\mathfrak{o}_{k})$. D'où un morphisme surjectif $G_{\sigma}^{\circ} \rightarrow \tilde{\textsf{G}}_{\sigma}^{\circ}$.

Notons $\overline{\textbf{\textsf{G}}}_{\sigma}$ le quotient réductif de $\tilde{\textbf{\textsf{G}}}_{\sigma}^{\circ}$. On a donc un morphisme surjectif $G_{\sigma}^{\circ} \rightarrow \overline{\mathsf{G}}_{\sigma}$ de noyau $G_{\sigma}^{+}$, d'où  un isomorphisme :
\[ G_{\sigma}^{\circ} / G_{\sigma}^{+} \overset{\sim}{\longrightarrow} \overline{\mathsf{G}}_{\sigma}\]
Soit $s \in (\overline{\mathsf{G}}^{*}_{\sigma})_{ss}$ (rappelons que $\overline{\mathsf{G}}^{*}_{\sigma}=\overline{\textsf{\textbf{G}}}{}^{*F}_{\sigma}$) d'ordre inversible dans $\Lambda$, on peut alors tirer en arrière par cet isomorphisme l'idempotent $e_{s,\Lambda}^{\overline{\textsf{G}}_{\sigma}}$ en un idempotent $e_{\sigma}^{s,\Lambda} \in \Lambda[ G_{\sigma}^{\circ}/G_{\sigma}^{+} ] \subset \mathcal{H}_{\Lambda}(G_{\sigma})$.

\bigskip

Soit $x\in BT_{0}$. Si l'on considère la sous-partie de l'immeuble constituée des polysimplexes $\tau$ tels que $x \leq \tau$ alors d'après \cite{tits_reductive} section 3.5.4, on obtient l'immeuble sphérique ("immeuble des sous-groupes $\mathfrak{f}$-paraboliques") de $\overline{\mathsf{G}}_{x}$.

Soit $\sigma \in BT$ tel que $x \leq \sigma$. Alors $G_{\sigma}^{\circ} \subset G_{x}^{\circ}$. On a ainsi un morphisme $G_{\sigma}^{\circ} \rightarrow G_{x}^{\circ} \rightarrow \overline{\mathsf{G}}_{x}$. Notons $\mathsf{P}_{\sigma}$ l'image de $G_{\sigma}^{\circ}$ dans $\overline{\mathsf{G}}_{x}$ qui est un sous-groupe parabolique et $\mathsf{U}_{\sigma}$ son radical unipotent. L'image réciproque de $\mathsf{U}_{\sigma}$ dans $G_{\sigma}^{\circ}$ est $G_{\sigma}^{+}$. Ceci fournit donc un isomorphisme $G_{\sigma}/G_{\sigma}^{+} \simeq \overline{\mathsf{G}}_{\sigma} \simeq \mathsf{P}_{\sigma} / \mathsf{U}_{\sigma}$.

Considérons $\textsf{M}_{\sigma}^{*}$ un sous-groupe de Levi de $\overline{\mathsf{G}}_{x}^{*}$ relevant $\overline{\textsf{G}}_{\sigma}^{*}$. Dans la section \ref{sectiondelignelusztig}, nous avons défini une application $\varphi_{\textsf{M}_{\sigma}^{*},\overline{\mathsf{G}}^{*}_{x}} : (\textsf{M}_{\sigma}^{*})_{ss} \rightarrow (\overline{\mathsf{G}}^{*}_{x})_{ss} $. Cette application est indépendante du choix du relèvement de $\overline{\textsf{G}}_{\sigma}^{*}$ et nous définit donc une application $\varphi^{*}_{\sigma,x}: (\overline{\mathsf{G}}^{*}_{\sigma})_{ss} \rightarrow (\overline{\mathsf{G}}^{*}_{x})_{ss} $. On définit alors pour $s \in (\overline{\mathsf{G}}^{*}_{x})_{ss}$ d'ordre inversible dans $\Lambda$, l'idempotent $e_{\sigma}^{s,\Lambda}:=\sum_{t\in \varphi_{\sigma,x}^{*-1}(s)}e_{\sigma}^{t,\Lambda}$.

\begin{Pro}
\label{proesigmaplus}
Soient $x\in BT_{0}$, $\sigma \in BT$ tel que $x \leq \sigma$ et $s \in (\overline{\mathsf{G}}^{*}_{x})_{ss}$ d'ordre inversible dans $\Lambda$. Alors $e_{\sigma}^{+}e_{x}^{s,\Lambda}=e_{x}^{s,\Lambda}e_{\sigma}^{+}=e_{\sigma}^{s,\Lambda}$.
\end{Pro}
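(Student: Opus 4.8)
The goal is to show that on the parahoric level, the idempotent $e_{\sigma}^{+}$ and the Deligne-Lusztig idempotent $e_{x}^{s,\Lambda}$ (pulled back from $\overline{\mathsf{G}}_{x}$) commute, and that their product coincides with $e_{\sigma}^{s,\Lambda}$. The plan is to reduce everything to a statement inside the finite reductive group $\overline{\mathsf{G}}_{x} \simeq G_{x}^{\circ}/G_{x}^{+}$ and then invoke Proposition \ref{proidemparabolique}. First I would observe that, by construction, $e_{x}^{s,\Lambda}$ lives in $\Lambda[G_{x}^{\circ}/G_{x}^{+}] = \Lambda[\overline{\mathsf{G}}_{x}]$, and that $e_{\sigma}^{+}$, viewed inside $\mathcal{H}_{\Lambda}(G_{x}^{\circ})$, is precisely the idempotent attached to the normal pro-$p$-subgroup $G_{\sigma}^{+} \supseteq G_{x}^{+}$; hence its image in $\Lambda[\overline{\mathsf{G}}_{x}]$ is the averaging idempotent $e_{\mathsf{U}_{\sigma}}$ over the unipotent radical $\mathsf{U}_{\sigma}$ of the parabolic $\mathsf{P}_{\sigma}$ (using the identification $G_{\sigma}^{+}/G_{x}^{+} \simeq \mathsf{U}_{\sigma}$ recalled just before the statement). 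This is the key translation step, and I expect it to be the main technical point: one must check carefully that reduction mod $p$ sends the Haar-measure idempotent of $G_{\sigma}^{+}$ to the group-algebra idempotent $\frac{1}{|\mathsf{U}_{\sigma}|}\sum_{u \in \mathsf{U}_{\sigma}} \mathds{1}_{u}$, which amounts to the fact that $G_{\sigma}^{+}$ is the full preimage of $\mathsf{U}_{\sigma}$ and that $p$ is invertible in $\Lambda$.

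Once this is done, the identity $e_{\sigma}^{+} e_{x}^{s,\Lambda} = e_{x}^{s,\Lambda} e_{\sigma}^{+}$ follows directly from Proposition \ref{proidemparabolique}: there, with $\overline{\mathsf{G}}_{x}$ playing the role of $\mathsf{G}$, $\textsf{M}_{\sigma}^{*}$ (a Levi lifting $\overline{\mathsf{G}}_{\sigma}^{*}$) the role of $\mathsf{L}^{*}$, and $\mathsf{P}_{\sigma}$, $\mathsf{U}_{\sigma}$ the parabolic and its unipotent radical, one has $e_{s,\Lambda}^{\overline{\mathsf{G}}_{x}} e_{\mathsf{U}_{\sigma}} = e_{\mathsf{U}_{\sigma}} e_{s,\Lambda}^{\textsf{M}_{\sigma}} = e_{s,\Lambda}^{\mathsf{P}_{\sigma}}$. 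Translating back through the isomorphism $G_{x}^{\circ}/G_{x}^{+} \simeq \overline{\mathsf{G}}_{x}$ and noting that $e_{x}^{s,\Lambda}$ corresponds to $e_{s,\Lambda}^{\overline{\mathsf{G}}_{x}}$ gives the commutation and shows that the common value is the pull-back of $e_{s,\Lambda}^{\mathsf{P}_{\sigma}}$.

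It then remains to identify this common value with $e_{\sigma}^{s,\Lambda}$. Here I would unwind the definition of $e_{\sigma}^{s,\Lambda} = \sum_{t \in \varphi_{\sigma,x}^{*-1}(s)} e_{\sigma}^{t,\Lambda}$, where each $e_{\sigma}^{t,\Lambda}$ is the pull-back to $\Lambda[G_{\sigma}^{\circ}/G_{\sigma}^{+}] \simeq \Lambda[\overline{\mathsf{G}}_{\sigma}]$ of $e_{t,\Lambda}^{\overline{\mathsf{G}}_{\sigma}}$. Using $\overline{\mathsf{G}}_{\sigma} \simeq \mathsf{P}_{\sigma}/\mathsf{U}_{\sigma}$ and the independence (already asserted in the text) of $\varphi^{*}_{\sigma,x}$ from the choice of Levi lift $\textsf{M}_{\sigma}^{*}$, the sum $\sum_{t \in \varphi_{\sigma,x}^{*-1}(s)} e_{t,\Lambda}^{\textsf{M}_{\sigma}}$ is exactly $e_{s,\Lambda}^{\textsf{M}_{\sigma}}$ in the notation of Section \ref{sectiondelignelusztig}, and hence, after multiplying by $e_{\mathsf{U}_{\sigma}}$, equals $e_{s,\Lambda}^{\mathsf{P}_{\sigma}}$ by Proposition \ref{proidemparabolique} again. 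Pulling back to $\mathcal{H}_{\Lambda}(G_{\sigma})$ via the inflation $\Lambda[\mathsf{P}_{\sigma}] \hookrightarrow \Lambda[G_{\sigma}^{\circ}]$ and matching this with the product $e_{\sigma}^{+} e_{x}^{s,\Lambda}$ computed above closes the argument. The only delicate points are the bookkeeping between the three groups $G_{\sigma}^{\circ} \subset G_{x}^{\circ}$ and their reductive quotients, and checking that all the inflation maps are compatible; these are routine once the first translation step is in place.
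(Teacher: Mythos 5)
Votre plan suit exactement la démarche du papier : on identifie l'image de $e_{\sigma}^{+}$ dans $\Lambda[G_{x}^{\circ}/G_{x}^{+}]\simeq\Lambda[\overline{\mathsf{G}}_{x}]$ avec l'idempotent de moyenne $e_{\mathsf{U}_{\sigma}}$, on applique la proposition \ref{proidemparabolique} pour obtenir $e_{s,\Lambda}^{\overline{\mathsf{G}}_{x}}e_{\mathsf{U}_{\sigma}}=e_{\mathsf{U}_{\sigma}}e_{s,\Lambda}^{\overline{\mathsf{G}}_{\sigma}}$, puis on tire en arrière et on utilise $e_{\sigma}^{+}e_{\sigma}^{s,\Lambda}=e_{\sigma}^{s,\Lambda}$. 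C'est correct et c'est essentiellement la preuve donnée dans l'article, simplement rédigée de façon plus détaillée sur les étapes de traduction.
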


\begin{proof}
D'après la proposition \ref{proidemparabolique} on a $e_{s,\Lambda}^{\overline{\mathsf{G}}_{x}}e_{\mathsf{U}_{\sigma}}=e_{\mathsf{U}_{\sigma}}e_{s,\Lambda}^{\overline{\mathsf{G}}_{\sigma}}$ dans $\Lambda[\overline{\mathsf{G}}_{x}]$. Lorsque l'on tire en arrière ces idempotents par l'isomorphisme $G_{x}^{\circ} / G_{x}^{+} \overset{\sim}{\longrightarrow} \overline{\mathsf{G}}_{x}$, on obtient dans $\Lambda[ G_{x}^{\circ}/G_{x}^{+} ]$ :
\[ e_{x}^{s,\Lambda}e_{\sigma}^{+}=e_{\sigma}^{+}e_{\sigma}^{s,\Lambda}.\]
Maintenant comme $e_{\sigma}^{+}e_{\sigma}^{s,\Lambda}=e_{\sigma}^{s,\Lambda}$ et $e_{\sigma}^{+}e_{x}^{s,\Lambda}=e_{x}^{s,\Lambda}e_{\sigma}^{+}$ on a le résultat.
\end{proof}

\subsection{Systèmes 0-cohérents de classes de conjugaison}

\label{sectSystemescoherents}

À partir d'un polysimplexe $\sigma$ et d'une classe de conjugaison semi-simple $s$ nous savons maintenant construire un idempotent $e_{\sigma}^{s,\Lambda}$. On décrit alors dans cette partie les conditions que l'on doit imposer pour que le système d'idempotents formé à partir des $e_{\sigma}^{s,\Lambda}$ soit un système 0-cohérent.

\bigskip

Soient $g \in G$ et $\sigma \in BT$, on a $g G_{\sigma}^{\circ} g^{-1}=G_{g\sigma}^{\circ}$ et $g G_{\sigma}^{+} g^{-1}= G_{g \sigma}^{+}$, d'où $g (G_{\sigma}^{\circ} / G_{\sigma}^{+} )g^{-1} \simeq G_{g\sigma}^{\circ} / G_{g \sigma}^{+}$. D'après \cite{BT} 4.6.30, la conjugaison par $g$ se prolonge en un isomorphisme du $\mathfrak{o}_{k}$-schéma en groupes $\mathcal{G}_{\sigma}^{\circ}$ sur le $\mathfrak{o}_{k}$-schéma en groupes $\mathcal{G}_{g\sigma}^{\circ}$ et donc induit un isomorphisme $\varphi_{g,\sigma}: \overline{\textsf{G}}_{\sigma} \overset{\sim}{\rightarrow} \overline{\textsf{G}}_{g\sigma}$. On obtient alors, comme dans l'annexe \ref{sectionRappelsGD}, un isomorphisme sur les classes de conjugaison semi-simples des groupes duaux
\[ \varphi^{*}_{g,\sigma}:(\overline{\textsf{G}}^{*}_{\sigma})_{ss} \overset{\sim}{\longrightarrow} (\overline{\textsf{G}}^{*}_{g\sigma})_{ss}\]

Pour $\sigma \in BT$, on note
\[(\overline{\mathsf{G}}^{*}_{\sigma})_{ss,\Lambda}=\{ s \in (\overline{\mathsf{G}}^{*}_{\sigma})_{ss} \text{ tel que } s \text{ soit d'ordre inversible dans } \Lambda \}\]

\begin{Def}
\label{defsystemsurcoherentconju}
Soit $S=(S_{\sigma})_{\sigma \in BT}$ un système d'ensembles de classes de conjugaison avec $S_{\sigma} \subseteq (\overline{\mathsf{G}}^{*}_{\sigma})_{ss,\Lambda}$. On dit que $S$ est 0-cohérent si
\begin{enumerate}
\item $\varphi^{*}_{g,x}(S_{x})=S_{gx}$ quel que soit $x\in BT_{0}$ et $g\in G$.
\item $\varphi_{\sigma,x}^{*-1}(S_{x})=S_{\sigma}$ pour $x\in BT_{0}$ et $\sigma \in BT$ tels que $x \leq \sigma$.
\end{enumerate}
\end{Def}

Soit $S=(S_{\sigma})_{\sigma \in BT}$ un système 0-cohérent. Soit $\sigma \in BT$, on définit alors $e_{\sigma}^{S,\Lambda}=\sum_{s \in S_{\sigma}} e_{\sigma}^{s,\Lambda}$.

\begin{Pro}
\label{prosystemeidempotent}
Le système $(e_{\sigma}^{S,\Lambda})_{\sigma \in BT}$ est 0-cohérent.
\end{Pro}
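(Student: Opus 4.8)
The plan is to check directly the two conditions defining a 0-coherent system of idempotents for $(e_{\sigma}^{S,\Lambda})_{\sigma\in BT}$, reducing each of them to the corresponding condition for the 0-coherent system of conjugacy classes $S$ by means of Lemma \ref{lemphiidempotent} and Proposition \ref{proesigmaplus}. The elementary bookkeeping fact used throughout is that, for a fixed $\sigma$, the idempotents $e_{\sigma}^{s,\Lambda}$ with $s\in(\overline{\mathsf{G}}^{*}_{\sigma})_{ss,\Lambda}$ are pairwise orthogonal: they are pulled back through $G_{\sigma}^{\circ}/G_{\sigma}^{+}\overset{\sim}{\longrightarrow}\overline{\mathsf{G}}_{\sigma}$ from the central idempotents $e_{s,\Lambda}^{\overline{\mathsf{G}}_{\sigma}}$, which cut out disjoint unions of rational Lusztig series. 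Consequently any sum of the $e_{\sigma}^{s,\Lambda}$ over a subset of $(\overline{\mathsf{G}}^{*}_{\sigma})_{ss,\Lambda}$ is again an idempotent; in particular $e_{\sigma}^{S,\Lambda}=\sum_{s\in S_{\sigma}}e_{\sigma}^{s,\Lambda}$ is an idempotent, as a 0-coherent system requires.

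For the equivariance condition, fix $x\in BT_{0}$ and $g\in G$. Conjugation by $g$ carries $\mathcal{H}_{\Lambda}(G_{x})$ onto $\mathcal{H}_{\Lambda}(G_{gx})$ and, under the identifications $G_{x}^{\circ}/G_{x}^{+}\simeq\overline{\mathsf{G}}_{x}$ and $G_{gx}^{\circ}/G_{gx}^{+}\simeq\overline{\mathsf{G}}_{gx}$, restricts on the group subalgebras to the map induced by the $F$-compatible isomorphism $\varphi_{g,x}\colon\overline{\mathsf{G}}_{x}\overset{\sim}{\longrightarrow}\overline{\mathsf{G}}_{gx}$ recalled just before Definition \ref{defsystemsurcoherentconju}. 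Lemma \ref{lemphiidempotent} then gives $g\,e_{x}^{s,\Lambda}\,g^{-1}=e_{gx}^{\varphi^{*}_{g,x}(s),\Lambda}$ for every $s\in(\overline{\mathsf{G}}^{*}_{x})_{ss,\Lambda}$; summing over $s\in S_{x}$, reindexing the resulting sum by the bijection $\varphi^{*}_{g,x}$, and using $\varphi^{*}_{g,x}(S_{x})=S_{gx}$ (condition 1 for $S$) yields $g\,e_{x}^{S,\Lambda}\,g^{-1}=e_{gx}^{S,\Lambda}$.

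For the second condition, fix $x\in BT_{0}$ and $\sigma\in BT$ with $x\leq\sigma$. Proposition \ref{proesigmaplus} gives, for each $s\in(\overline{\mathsf{G}}^{*}_{x})_{ss,\Lambda}$, the equality $e_{\sigma}^{+}e_{x}^{s,\Lambda}=e_{x}^{s,\Lambda}e_{\sigma}^{+}=e_{\sigma}^{s,\Lambda}$, where by definition $e_{\sigma}^{s,\Lambda}=\sum_{t\in\varphi_{\sigma,x}^{*-1}(s)}e_{\sigma}^{t,\Lambda}$. Summing over $s\in S_{x}$ and using that the fibres $\varphi_{\sigma,x}^{*-1}(s)$, $s\in S_{x}$, are pairwise disjoint with union $\varphi_{\sigma,x}^{*-1}(S_{x})=S_{\sigma}$ (condition 2 for $S$), one gets
\[ e_{\sigma}^{+}e_{x}^{S,\Lambda}=e_{x}^{S,\Lambda}e_{\sigma}^{+}=\sum_{s\in S_{x}}\;\sum_{t\in\varphi_{\sigma,x}^{*-1}(s)}e_{\sigma}^{t,\Lambda}=\sum_{t\in S_{\sigma}}e_{\sigma}^{t,\Lambda}=e_{\sigma}^{S,\Lambda}, \]
which is precisely condition 2. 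This finishes the verification.

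I do not expect any real obstacle: once Lemma \ref{lemphiidempotent} and Proposition \ref{proesigmaplus} are available the argument is purely formal. The points requiring a little care are the bookkeeping with the overloaded symbol $e_{\sigma}^{s,\Lambda}$ — the raw pullback when $s\in(\overline{\mathsf{G}}^{*}_{\sigma})_{ss}$, versus the sum over a fibre of $\varphi^{*}_{\sigma,x}$ when $s\in(\overline{\mathsf{G}}^{*}_{x})_{ss}$ — and pinning down that conjugation by $g$ on $\Lambda[G_{x}^{\circ}/G_{x}^{+}]$ coincides with the algebra map induced by $\varphi_{g,x}$, so that Lemma \ref{lemphiidempotent} applies verbatim.
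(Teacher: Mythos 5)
Your proof is correct and follows essentially the same route as the paper: condition 1 via Lemma \ref{lemphiidempotent} together with $\varphi^{*}_{g,x}(S_{x})=S_{gx}$, and condition 2 via Proposition \ref{proesigmaplus} together with $\varphi_{\sigma,x}^{*-1}(S_{x})=S_{\sigma}$. The extra remarks on pairwise orthogonality and on identifying conjugation by $g$ with the map induced by $\varphi_{g,x}$ are consistent with (and only slightly more explicit than) the paper's argument.
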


\begin{proof}
Commençons par vérifier la condition 1. :\\
Soient $x \in BT_{0}$ et $g\in G$. On a la commutativité du diagramme suivant
\[ \xymatrix{
G_{x}^{\circ} / G_{x}^{+} \ar@{->}[r]^-{\sim} \ar@{->}[d]^{\tiny{\mbox{conj par }}g} & \overline{\mathsf{G}}_{x} \ar@{->}[d]^{\varphi_{g,x}}\\
G_{gx}^{\circ} / G_{gx}^{+} \ar@{->}[r]^-{\sim} & \overline{\mathsf{G}}_{gx}}
\]
Le lemme \ref{lemphiidempotent} nous dit que $\varphi_{g,x}(e_{s,\Lambda}^{\overline{\mathsf{G}}_{x}})=e_{\varphi^{*}_{g,x}(s),\Lambda}^{\overline{\mathsf{G}}_{gx}}$. Ainsi
\[ge_{x}^{S,\Lambda}g^{-1}=\sum_{s \in S_{x}} ge_{x}^{s,\Lambda} g^{-1}=\sum_{s \in S_{x}} e_{gx}^{\varphi^{*}_{g,x}(s),\Lambda}=\sum_{s \in \varphi^{*}_{g,x}(S_{x})} e_{gx}^{s,\Lambda}=\sum_{s \in S_{gx}} e_{gx}^{s,\Lambda} =e_{gx}^{S,\Lambda}\]

Passons maintenant à la condition 2. : \\
Soient $x \in BT_{0}$ et $\sigma \in BT$ tels que $x \leq \sigma$.
\[e_{\sigma}^{+}e_{x}^{S,\Lambda}=\sum_{s \in S_{x}} e_{\sigma}^{+}e_{x}^{s,\Lambda}\]
Par la proposition \ref{proesigmaplus} on a $e_{\sigma}^{+}e_{x}^{s,\Lambda}=\sum_{t\in \varphi^{*-1}_{\sigma,x}(s)} e_{\sigma}^{t,\Lambda}$. Donc
\[e_{\sigma}^{+}e_{x}^{S,\Lambda}=\sum_{s \in S_{x}} \sum_{t\in \varphi^{*-1}_{\sigma,x}(s)} e_{\sigma}^{t,\Lambda}=\sum_{t \in \varphi_{\sigma,x}^{*-1}(S_{x})}e_{\sigma}^{t,\Lambda}=\sum_{t \in S_{\sigma}} e_{\sigma}^{t,\Lambda}=e_{\sigma}^{S,\Lambda}\]

\end{proof}

On note $Rep_{\Lambda}(G)$ la catégorie abélienne des représentations lisses de $G$ à coefficients dans $\Lambda$. Notons $Rep_{\Lambda}^{0}(G)$ la sous-catégorie des représentations de niveau 0, c'est à dire la sous-catégorie découpée par le système d'idempotents $(e_{x}^{+})_{x \in BT_{0}}$.

Soit $S=(S_{\sigma})_{\sigma \in BT}$ un système 0-cohérent, il définit alors un système $(e_{\sigma}^{S,\Lambda})_{\sigma \in BT}$ 0-cohérent et forme donc, d'après le théorème \ref{themoyersolleveld}, une catégorie $Rep_{\Lambda}^{S}(G)$.

\begin{Def}
Soient $S_{1}=(S_{1,\sigma})_{\sigma \in BT}$ et $S_{2}=(S_{2,\sigma})_{\sigma \in BT}$ deux systèmes de classes de conjugaison. On définit alors $S_{1} \cup S_{2}:=(S_{1,\sigma} \cup S_{2,\sigma})_{\sigma \in BT}$ et $S_{1} \cap S_{2}:=(S_{1,\sigma} \cap S_{2,\sigma})_{\sigma \in BT}$. On dit que $S_{2} \subseteq S_{1}$ si pour tout $\sigma \in BT$ $S_{2,\sigma} \subseteq S_{1,\sigma}$. Enfin, si $S_{2} \subseteq S_{1}$, on note $S_{1} \backslash S_{2}:=(S_{1,\sigma} \backslash S_{2,\sigma})_{\sigma \in BT}$.
\end{Def}

\begin{Lem}
\label{lemsystemeorthogonal}
Soient $S_{1}$ et $S_{2}$ deux systèmes 0-cohérents tels que $S_{1} \cap S_{2} = \emptyset$. Alors les catégories $Rep_{\Lambda}^{S_{1}}(G)$ et $Rep_{\Lambda}^{S_{2}}(G)$ sont orthogonales.
\end{Lem}

\begin{proof}
Soit $V$ un objet de $Rep_{\Lambda}^{S_{1}}(G)$. Nous devons montrer que pour tout sommet de l'immeuble $x$, on a $e_{x}^{S_{2}}V=0$. Fixons un tel $x$. Par définition $V=\sum_{y \in BT_{0}} e_{y}^{S_{1}} V$, donc $e_{x}^{S_{2}}V=\sum_{y \in BT_{0}} e_{x}^{S_{2}} e_{y}^{S_{1}} V$. Soit $y \in BT_{0}$. Comme $(e_{\sigma}^{S_{1}})_{\sigma \in BT}$ est 0-cohérent, on sait par \ref{prosystemecoherent} que $e_{x}^{+}e_{y}^{S_{1}}=e_{x}^{S_{1}}e_{y}^{S_{1}}$ et on a que
\[ e_{x}^{S_{2}}e_{y}^{S_{1}}=e_{x}^{S_{2}}e_{x}^{+}e_{y}^{S_{1}}=e_{x}^{S_{2}}e_{x}^{S_{1}}e_{y}^{S_{1}}\]

Or si $s$ et $s'$ sont deux éléments distincts de $(\widehat{\overline{\mathsf{G}}}_{x})_{ss}$ d'ordre inversible dans $\Lambda$, $e_{x}^{s,\Lambda}e_{x}^{s',\Lambda}=0$ donc $e_{x}^{S_{2}}e_{x}^{S_{1}}=0$ et on a le résultat.
\end{proof}

\begin{Pro}
\label{prodecompocategorie}
Soient $S_{1},\cdots,S_{n}$ des systèmes 0-cohérents tels que $S_{i} \cap S_{j} = \emptyset$ si $i \neq j$ et $\bigcup_{i=1}^{n} S_{i} = ((\overline{\mathsf{G}}^{*}_{\sigma})_{ss,\Lambda})_{\sigma \in BT}$. Alors la catégorie de niveau $0$ se décompose en 

\[ Rep_{\Lambda}^{0}(G) = \prod_{i=1}^{n} Rep_{\Lambda}^{S_{i}}(G) \]
\end{Pro}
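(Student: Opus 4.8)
The strategy is to reduce the statement to the general-nonsense fact that a family of pairwise orthogonal Serre subcategories whose "supports" exhaust the idempotent $e_x^+$ at each vertex gives a product decomposition of $Rep_\Lambda^0(G)$. First I would observe that each $Rep_\Lambda^{S_i}(G)$ is a Serre subcategory of $Rep_\Lambda(G)$ by Theorem \ref{themoyersolleveld} (applied to the $0$-coherent system $(e_\sigma^{S_i,\Lambda})_{\sigma}$, which is coherent on vertices by Proposition \ref{prosystemecoherent}), and that it is contained in $Rep_\Lambda^0(G)$: indeed for $V \in Rep_\Lambda^{S_i}(G)$ and $x \in BT_0$, Proposition \ref{prosystemecoherent} gives $e_x^+ e_x^{S_i,\Lambda} = e_x^{S_i,\Lambda}$, hence $e_x^+ V \supseteq e_x^{S_i,\Lambda} V$, and summing over $x$ shows $V = \sum_x e_x^+ V$. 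By Lemma \ref{lemsystemeorthogonal} the $Rep_\Lambda^{S_i}(G)$ are pairwise orthogonal since $S_i \cap S_j = \emptyset$ for $i \neq j$.

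Next I would establish the key pointwise identity: at every vertex $x \in BT_0$,
\[ e_x^+ = \sum_{i=1}^n e_x^{S_i,\Lambda} \quad \text{in } \mathcal{H}_\Lambda(G_x). \]
This follows because, under the isomorphism $G_x^\circ/G_x^+ \simeq \overline{\mathsf{G}}_x$, the idempotent $e_x^+$ pulls back the unit of $\Lambda[\overline{\mathsf{G}}_x]$, and by Proposition \ref{prodelignelusztig} we have $1 = \sum_{s} e_{s,\Lambda}^{\overline{\mathsf{G}}_x}$ where $s$ runs over $(\overline{\mathsf{G}}_x^*)_{ss,\Lambda}$ (for $\Lambda = \overline{\mathbb{Q}}_\ell$ this is part (1); for $\Lambda = \overline{\mathbb{Z}}_\ell$ one groups the rational idempotents by $\ell$-regular part as in part (2), and the resulting $\ell$-integral idempotents are indexed by the $\ell$-regular classes, i.e. exactly the elements of order invertible in $\Lambda$). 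Since $\bigcup_i S_i = ((\overline{\mathsf{G}}_\sigma^*)_{ss,\Lambda})_\sigma$ with the $S_{i,x}$ pairwise disjoint, the family $(S_{i,x})_i$ partitions $(\overline{\mathsf{G}}_x^*)_{ss,\Lambda}$, and regrouping the sum $1 = \sum_s e_{s,\Lambda}^{\overline{\mathsf{G}}_x}$ according to this partition gives $1 = \sum_i e_x^{S_i,\Lambda}$ in $\Lambda[\overline{\mathsf{G}}_x]$; pulling back yields the displayed identity.

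Finally I would assemble the decomposition. Given $V \in Rep_\Lambda^0(G)$, set $V_i := Rep_\Lambda^{S_i}(G)$-part $= \sum_{x} e_x^{S_i,\Lambda} V$; this is a subrepresentation (the $e_x^{S_i,\Lambda}$ being $G$-equivariant as a system) lying in $Rep_\Lambda^{S_i}(G)$. From $V = \sum_x e_x^+ V = \sum_x \sum_i e_x^{S_i,\Lambda} V = \sum_i V_i$ we get that the $V_i$ generate $V$; orthogonality of the subcategories forces the sum $\sum_i V_i$ to be direct and forces any morphism to respect the decomposition, so $V = \bigoplus_i V_i$ functorially. This gives the product decomposition $Rep_\Lambda^0(G) = \prod_{i=1}^n Rep_\Lambda^{S_i}(G)$. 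The only genuinely delicate point is the integral case of the pointwise identity — checking that the idempotents $e_{s,\Lambda}^{\overline{\mathsf{G}}_x}$ for $s$ of order invertible in $\Lambda$ really do sum to $1$ and are mutually orthogonal, which is exactly the content of Proposition \ref{prodelignelusztig}(2) together with the bookkeeping of $\ell$-regular classes; everything else is formal manipulation of coherent systems already developed in Section 1.
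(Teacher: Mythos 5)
Your proposal is correct and follows essentially the same route as the paper: pairwise orthogonality via Lemma \ref{lemsystemeorthogonal}, the pointwise identity $e_{x}^{+}=\sum_{i}e_{x}^{S_{i},\Lambda}$ deduced from Proposition \ref{prodelignelusztig} (with the regrouping by $\ell$-regular parts in the $\overline{\mathbb{Z}}_{\ell}$ case, which the paper leaves implicit), and the observation that $\sum_{x}e_{x}^{S_{i},\Lambda}V$ is an object of $Rep_{\Lambda}^{S_{i}}(G)$ (for which the paper cites \cite{meyer_resolutions_2010}, proposition 3.2). Your treatment of the integral case is in fact slightly more explicit than the paper's.
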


\begin{proof}
 
 D'après le lemme \ref{lemsystemeorthogonal} nous savons déjà que les catégories $Rep_{\Lambda}^{S_{i}}(G)$ sont deux à deux orthogonales. Prenons maintenant $V$ un objet de $Rep_{\Lambda}^{0}(G)$. Par définition, $V=\sum_{x \in BT_{0}} e_{x}^{+}V$. Fixons un sommet $x \in BT_{0}$. D'après \ref{prodelignelusztig}, on a $e_{x}^{+}=\sum_{s \in (\widehat{\overline{\mathsf{G}}}_{x})_{ss}} e_{x}^{s,\overline{\mathbb{Q}}_{\ell}}$. Ainsi $e_{x}^{+}=\sum_{i=1}^{n} e_{x}^{S_{i}}$. On en déduit que
 \[ V=\sum_{x \in BT_{0}} e_{x}^{+}V=\sum_{x \in BT_{0}} \sum_{i=1}^{n} e_{x}^{S_{i}}V =\sum_{i=1}^{n} \left(\sum_{x \in BT_{0}} e_{x}^{S_{i}}V\right).\]
 Or $\sum_{x \in BT_{0}} e_{x}^{S_{i}}V$ est un objet de $Rep_{\Lambda}^{S_{i}}(G)$ d'après \cite{meyer_resolutions_2010} proposition 3.2, d'où le résultat.
\end{proof}

\section{Paramètres de l'inertie modérés}

\label{sectionParamInertie}

Dans toute cette section on suppose de plus que $\textbf{G}$ est $K$-déployé. Cela signifie que $\mathbf{G}$ est une forme intérieure d'un groupe non-ramifié. On souhaite obtenir une décomposition de $Rep_{\Lambda}^{0}(G)$ indexée par les paramètres de l'inertie modérés $\phi$. Pour cela on construit un procédé permettant d'associer à chaque $\phi$ un système de classes de conjugaison 0-cohérent.

\subsection{Classes de conjugaison dans $\textsf{\textbf{G}}^{*}$}

\label{secParamInertClasseConj}

Commençons par définir les paramètres de l'inertie modérés et montrons que l'on peut décrire ceux-ci en terme de classes de conjugaison semi-simples dans $\textsf{\textbf{G}}^{*}$.

\bigskip

On notera $\mathcal{G}_{k}=Gal(\overline{k}/k)$ le groupe de Galois absolu de $k$, $W_{k}$ le groupe de Weil absolu de $k$ et $I_{k}$ le sous-groupe d'inertie. Le groupe $\Gamma:=\mathcal{G}_{k}/I_{k}$ est topologiquement engendré par un élément $\text{Frob}$ dont l'inverse induit l'automorphisme $x \mapsto x^{q}$ sur $\mathfrak{F}$. Ainsi $K=\overline{k}^{I_{k}}$ et $k=K^{\text{Frob}}$.
L'action de $\mathcal{G}_{k}$ sur $\mathbf{G}$ donne une action de $\Gamma$ sur $\mathbf{G}(K)$, complètement déterminée par un automorphisme $F \in Aut(\mathbf{G}(K))$ donné par l'action de $\text{Frob}$. On a alors $G=\mathbf{G}(K)^{F}$.

Soit $\mathbf{T}$ un $k$-tore maximal $K$-déployé maximalement déployé, alors $I_{k}$ agit trivialement sur $X_{*}(\mathbf{T})$, le groupe des co-caractères de $\mathbf{T}$, et l'action de $\mathcal{G}_{k}$ sur $X_{*}(\mathbf{T})$ se factorise à travers $\Gamma$. Notons alors $\vartheta$ l'automorphisme de $X_{*}(\mathbf{T})$ induit par $F$. La dualité entre $X_{*}(\mathbf{T})$ et $X^{*}(\mathbf{T})$ permet d'associer de façon naturelle à $\vartheta$ un automorphisme $\widehat{\vartheta} \in Aut(X^{*}(\mathbf{T}))$. Cet automorphisme s'étend alors alors en un automorphisme $\widehat{\vartheta} \otimes 1$ de $\widehat{\mathbf{T}}(\overline{\mathbb{Q}}_{\ell}):=X^{*}(\mathbf{T})\otimes \overline{\mathbb{Q}}_{\ell}^{\times}$ que nous noterons encore $\widehat{\vartheta}$. Fixons un épinglage $(\widehat{\mathbf{G}},\widehat{\mathbf{B}},\widehat{\mathbf{T}},\{x_{\alpha}\}_{\alpha \in \Delta})$ de $\widehat{\mathbf{G}}$ où $\widehat{\mathbf{B}}$ est un Borel contenant $\widehat{\mathbf{T}}$. Celui-ci permet de prolonger $\widehat{\vartheta}$ en un automorphisme $\widehat{\vartheta} \in Aut(\widehat{\mathbf{G}})$.

On note $P_{k}$ le groupe d'inertie sauvage, c'est à dire le pro-$p$ sous-groupe maximal de $I_{k}$. Le groupe d'inertie modérée est le quotient $I_{k}/P_{k}$ et le groupe de Weil modéré est le quotient $W_{k}/P_{k}$. On note $W_{k}'=W_{k}\ltimes \overline{\mathbb{Q}}_{\ell}$ le groupe de Weil-Deligne.

\begin{Def}
\label{defLmorphisme}
Un morphisme $\varphi : W'_{k} \rightarrow {}^{L}\textbf{G}(\overline{\mathbb{Q}}_{\ell}):=\langle\widehat{\vartheta}\rangle \ltimes\widehat{\textbf{G}}(\overline{\mathbb{Q}}_{\ell})$ est dit admissible si
\begin{enumerate}
\item Le diagramme suivant commute :
\[ \xymatrix{
W'_{k} \ar@{->}[r]^{\varphi} \ar@{->}[d] & {}^{L}\textbf{G}(\overline{\mathbb{Q}}_{\ell}) \ar@{->}[d]\\
\langle \text{Frob} \rangle \ar@{->}[r] & \langle \widehat{\vartheta} \rangle }\]

\item $\varphi$ est continue, $\varphi(\overline{\mathbb{Q}}_{\ell})$ est unipotent dans $\widehat{\textbf{G}}(\overline{\mathbb{Q}}_{\ell})$, et $\varphi$ envoie $W_{k}$ sur des éléments semi-simples de ${}^{L}\textbf{G}(\overline{\mathbb{Q}}_{\ell})$ (un élément de ${}^{L}\textbf{G}(\overline{\mathbb{Q}}_{\ell})$ est semi-simple si sa projection dans $\langle\widehat{\vartheta}\rangle / n\langle\widehat{\vartheta}\rangle\ltimes\widehat{\textbf{G}}(\overline{\mathbb{Q}}_{\ell})$ est semi-simple, où $n$ est l'ordre de $\widehat{\vartheta}$).
\end{enumerate}
\end{Def}

On note alors $\Phi(\textbf{G})$ l'ensemble des morphismes admissibles $\varphi : W'_{k} \rightarrow {}^{L}\textbf{G}(\overline{\mathbb{Q}}_{\ell})$ modulo les automorphismes intérieurs par des éléments de $\widehat{\textbf{G}}(\overline{\mathbb{Q}}_{\ell})$.

\bigskip

Soit $I$ un sous-groupe de $W_{k}$. On note alors $\Phi(I,\textbf{G})$ l'ensemble des classes de $\widehat{\textbf{G}}$-conjugaison des morphismes continus $I \rightarrow {}^{L}\textbf{G}(\overline{\mathbb{Q}}_{\ell})$ (où $\overline{\mathbb{Q}}_{\ell}$ est muni de la topologie discrète) qui admettent une extension à un $L$-morphisme de $\Phi(\textbf{G})$. Dans ce qui suit nous allons nous intéresser principalement aux paramètres de Langlands inertiels $\Phi(I_{k},\textbf{G})$.

\begin{Def}
Si $I$ contient $P_{k}$, l'inertie sauvage, on dit qu'un paramètre $\phi \in \Phi(I,\textbf{G})$ est modéré s'il est trivial sur $P_{k}$, et on note $\Phi_{m}(I,\textbf{G})$ pour l'ensemble des paramètres de $I$ modérés.
\end{Def}

Intéressons nous à $\Phi_{m}(I_k,\textbf{G})$. Comme $I_{k}/P_{k}$ est procyclique de pro-ordre premier à $p$ un morphisme continu $I_{k}/P_{k} \rightarrow \widehat{\textbf{G}}(\overline{\mathbb{Q}}_{\ell})$ est donné par le choix d'un élément $s$ d'ordre fini premier à $p$. Nous avons la décomposition $W_{k}/P_{k}=\langle \text{Frob} \rangle \ltimes (I_{k}/P_{k})$, où pour $x \in (I_{k}/P_{k})$, $\text{Frob }^{-1}x \text{ Frob} = x^q$. Un paramètre de Langlands doit envoyer $\text{Frob}$ sur $\widehat{\vartheta}f$ où $f$ est un élément semi-simple de $\widehat{\textbf{G}}(\overline{\mathbb{Q}}_{\ell})$. Un tel morphisme s'étend donc à $W_{k}/P_{k}$ si $Ad(f) \circ \widehat{\vartheta} \circ s^{q} =s$, où $Ad(f)$ désigne la conjugaison par $f$. Ainsi à un paramètre inertiel modéré 
$\phi \in \Phi_{m}(I_k,\textbf{G})$ on peut associer une classe de conjugaison semi-simple dans $\widehat{\textbf{G}}(\overline{\mathbb{Q}}_{\ell})$ stable sous $\widehat{\vartheta}\circ \psi$ où $\psi$ est l'élévation à la puissance $q$-ième. On a donc une application $\Phi_{m}(I_k,\textbf{G}) \rightarrow (\widehat{\textbf{G}}(\overline{\mathbb{Q}}_{\ell})_{ss})^{\widehat{\vartheta}\circ \psi}$. Or nous savons que $( \widehat{\textbf{T}}(\overline{\mathbb{Q}}_{\ell})/W_{0} ) \overset{\sim}{\longrightarrow}  \widehat{\textbf{G}}(\overline{\mathbb{Q}}_{\ell})_{ss}$, où $W_{0}:=N(\mathbf{T})/ \mathbf{T}$ désigne le groupe de Weyl de $\mathbf{T}$, ce qui nous permet de définir l'application $\Phi_{m}(I_k,\textbf{G}) \rightarrow ( \widehat{\textbf{T}}(\overline{\mathbb{Q}}_{\ell})/W_{0} )^{\widehat{\vartheta}\circ \psi}$. Réciproquement, prenons un élément de $( \widehat{\textbf{T}}(\overline{\mathbb{Q}}_{\ell})/W_{0} )^{\widehat{\vartheta}\circ \psi}$. Ceci nous fournit un élément semi-simple $s$ tel que $\widehat{\vartheta}\circ \psi(s)=w \cdot s$, où $w\in W_{0}$. Soit $f$ un relèvement de $w$, qui est alors un élément semi-simple, on a alors $\widehat{\vartheta}\circ \psi(s)=Ad(f)(s)$, donc on obtient un $\phi \in \Phi_{m}(I_k,\textbf{G})$. Ceci nous montre que l'on a une correspondance 
\[\Phi_{m}(I_k,\textbf{G}) \longleftrightarrow ( \widehat{\textbf{T}}(\overline{\mathbb{Q}}_{\ell})/W_{0} )^{\widehat{\vartheta}\circ \psi}.\]

Soit $s \in( \widehat{\textbf{T}}(\overline{\mathbb{Q}}_{\ell})/W_{0} )^{\widehat{\vartheta}\circ \psi}$, on peut représenter $s$ par $s=(a_{1}, \cdots, a_{n})$, avec $a_{i} \in \overline{\mathbb{Q}}_{\ell}^{\times}$ ($\widehat{\textbf{T}} \simeq \mathbb{G}_{m}^{n}$). Soit $k\in \mathbb{N}$, par définition on a $\widehat{\vartheta}^{k}(s^{p^k})=s$ dans $( \widehat{\textbf{T}}(\overline{\mathbb{Q}}_{\ell})/W_{0})^{\widehat{\vartheta}\circ \psi}$. Comme $\widehat{\vartheta}$ est d'ordre fini, disons $N$, $s^{p^{N}}=s$ dans $( \widehat{\textbf{T}}(\overline{\mathbb{Q}}_{\ell})/W_{0})^{\widehat{\vartheta}\circ \psi}$. Donc il existe $w \in W_{0}$ tel que $(a_{1}^{p^{N}}, \cdots, a_{n}^{p^{N}})=w\cdot (a_{1},\cdots, a_{n})$. Or $W_{0}$ est de cardinal fini, donc il existe $k \in \mathbb{N}^{*}$, tel que $(a_{1}^{p^{kN}}, \cdots, a_{n}^{p^{kN}})=(a_{1}, \cdots, a_{n})$. Ainsi $\forall i \in \{1,\cdots,n\}$, $a_{i}^{p^{kN}-1}=1$. Les $a_{i}$ sont donc des racines $p'$-ièmes de l'unité (racines de l'unité d'ordre premier à $p$).

Notre groupe $\mathbf{G}$ étant $K$-déployé, il possède une forme intérieure non-ramifiée. Cette dernière permet de définir sur $\textbf{\textsf{G}}^{*}$, le groupe dual de $\mathbf{G}$ sur $\mathfrak{F}$, une $\mathfrak{f}$-structure (et donc un Frobenius $F$) en choissant un sommet hyperspécial dans l'immeuble. Le choix d'un système compatible de racines de l'unité (que l'on a fixé au début dans les notations) permet d'identifier 
\[( \widehat{\mathbf{T}}(\overline{\mathbb{Q}}_{\ell})/W_{0} )^{\widehat{\vartheta}\circ \psi} \longleftrightarrow (\textsf{\textbf{T}}^{*}(\mathfrak{F})/W_{0} )^{\widehat{\vartheta}\circ \psi}.\]
(Nous rappelons que $\widehat{\mathbf{T}}$ désigne le dual de $\mathbf{T}$ sur $\overline{\mathbb{Q}}_{\ell}$ et $\textsf{\textbf{T}}^{*}$ celui sur $\mathfrak{F}$. Ainsi $\widehat{\mathbf{T}}(\overline{\mathbb{Q}}_{\ell})=X^{*}(\mathbf{T})\otimes \overline{\mathbb{Q}}_{\ell}^{\times}$ et $\textsf{\textbf{T}}^{*}(\mathfrak{F})=X^{*}(\mathbf{T})\otimes \mathfrak{F}^{\times}$.)

Or l'action de $\widehat{\vartheta}\circ \psi$ sur $\textsf{\textbf{T}}^{*}(\mathfrak{F})$ correspond à l'action du Frobenius $F$ (voir annexe \ref{secFrobenius}, ici $\widehat{\vartheta}=\tau_{X}^{-1}$). Ainsi 
\[( \textsf{\textbf{T}}^{*}(\mathfrak{F})/W_{0} )^{\widehat{\vartheta}\circ \psi} = ( \textsf{\textbf{T}}^{*}(\mathfrak{F})/W_{0} )^{F} \longleftrightarrow (\textsf{\textbf{G}}^{*}(\mathfrak{F})_{ss})^{F}.\]
En résumé nous avons montré

\begin{Pro}
\label{proparaminertie}
La discussion précédente nous fournit une identification :
\[ \Phi_{m}(I_k,\mathbf{G}) \longleftrightarrow (\textnormal{\textsf{\textbf{G}}}^{*}(\mathfrak{F})_{ss})^{F}.\]
\end{Pro}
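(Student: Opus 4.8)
The plan is to assemble the identification from the chain of bijections already set up in the discussion preceding the statement, checking that each arrow is well-defined and that they compose to a genuine bijection. First I would record the starting point: a tame inertial parameter $\phi \in \Phi_{m}(I_k,\mathbf{G})$ is, by triviality on $P_k$ and procyclicity of $I_k/P_k$, the datum of a single semisimple element $s \in \widehat{\mathbf{G}}(\overline{\mathbb{Q}}_{\ell})$ of finite order prime to $p$ (the image of a topological generator), taken up to $\widehat{\mathbf{G}}$-conjugacy; and the extendability condition to a genuine $L$-morphism on $W_k$ is exactly the requirement that there exist a semisimple $f$ with $Ad(f)\circ\widehat{\vartheta}\circ\psi(s) = s$, i.e.\ that the conjugacy class of $s$ is fixed by $\widehat{\vartheta}\circ\psi$. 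This gives the first map $\Phi_{m}(I_k,\mathbf{G}) \to (\widehat{\mathbf{G}}(\overline{\mathbb{Q}}_{\ell})_{ss})^{\widehat{\vartheta}\circ\psi}$, and I would check injectivity (two parameters with conjugate $s$ are $\widehat{\mathbf{G}}$-conjugate as maps on $I_k$) and surjectivity (any $\widehat{\vartheta}\circ\psi$-fixed class lifts: pick $f$ a representative of the Weyl element conjugating $\widehat{\vartheta}\circ\psi(s)$ back to $s$, which is automatically semisimple, so the morphism extends).

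Next I would invoke the Chevalley-type isomorphism $\widehat{\mathbf{T}}(\overline{\mathbb{Q}}_{\ell})/W_0 \overset{\sim}{\to} \widehat{\mathbf{G}}(\overline{\mathbb{Q}}_{\ell})_{ss}$, which is $\widehat{\vartheta}\circ\psi$-equivariant since $W_0$ is $\widehat{\vartheta}$-stable, hence restricts to a bijection on $\widehat{\vartheta}\circ\psi$-fixed points, yielding the second identification with $(\widehat{\mathbf{T}}(\overline{\mathbb{Q}}_{\ell})/W_0)^{\widehat{\vartheta}\circ\psi}$. Then comes the arithmetic lemma already carried out in the discussion: writing $s = (a_1,\dots,a_n)$ under $\widehat{\mathbf{T}}\simeq\mathbb{G}_m^n$, the hypothesis $\widehat{\vartheta}^k(s^{p^k}) = s$ in the quotient together with finiteness of $W_0$ and of the order $N$ of $\widehat{\vartheta}$ forces $a_i^{p^{kN}-1}=1$ for some $k$, so each $a_i$ is a $p'$-root of unity. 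This is what allows the passage from $\overline{\mathbb{Q}}_{\ell}$-coefficients to $\mathfrak{F}$-coefficients: the fixed compatible system $(\mathbb{Q}/\mathbb{Z})_{p'} \overset{\sim}{\to} \mathfrak{F}^{\times}$ and $(\mathbb{Q}/\mathbb{Z})_{p'} \hookrightarrow \overline{\mathbb{Z}}_{\ell}^{\times}$ identifies the $p'$-torsion of $\overline{\mathbb{Q}}_{\ell}^{\times}$ with that of $\mathfrak{F}^{\times}$, hence $(\widehat{\mathbf{T}}(\overline{\mathbb{Q}}_{\ell})/W_0)^{\widehat{\vartheta}\circ\psi} \leftrightarrow (\textsf{\textbf{T}}^{*}(\mathfrak{F})/W_0)^{\widehat{\vartheta}\circ\psi}$, using $\widehat{\mathbf{T}}(\overline{\mathbb{Q}}_{\ell}) = X^{*}(\mathbf{T})\otimes\overline{\mathbb{Q}}_{\ell}^{\times}$ and $\textsf{\textbf{T}}^{*}(\mathfrak{F}) = X^{*}(\mathbf{T})\otimes\mathfrak{F}^{\times}$.

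Finally I would identify the operator: since $\mathbf{G}$ is $K$-deployed it admits an unramified inner form, and fixing a hyperspecial vertex endows $\textsf{\textbf{G}}^{*}$ (and $\textsf{\textbf{T}}^{*}$) with an $\mathfrak{f}$-structure whose Frobenius $F$ acts on $\textsf{\textbf{T}}^{*}(\mathfrak{F})$ precisely as $\widehat{\vartheta}\circ\psi$ — this is the content of the appendix computation (with $\widehat{\vartheta} = \tau_X^{-1}$), where $\psi$ is the arithmetic $q$-power Frobenius on $\mathfrak{F}^{\times}$ and $\widehat{\vartheta}$ encodes the diagram automorphism coming from the non-split $\Gamma$-action. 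Therefore $(\textsf{\textbf{T}}^{*}(\mathfrak{F})/W_0)^{\widehat{\vartheta}\circ\psi} = (\textsf{\textbf{T}}^{*}(\mathfrak{F})/W_0)^{F}$, and one last application of the Chevalley isomorphism over $\mathfrak{F}$, now $F$-equivariant, gives $(\textsf{\textbf{T}}^{*}(\mathfrak{F})/W_0)^{F} \leftrightarrow (\textsf{\textbf{G}}^{*}(\mathfrak{F})_{ss})^{F}$. Composing all the displayed bijections yields the claimed identification $\Phi_{m}(I_k,\mathbf{G}) \leftrightarrow (\textsf{\textbf{G}}^{*}(\mathfrak{F})_{ss})^{F}$. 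The one genuinely delicate point — which I would treat as the main obstacle — is the compatibility of the operator $\widehat{\vartheta}\circ\psi$ with the geometric Frobenius $F$ on $\textsf{\textbf{T}}^{*}(\mathfrak{F})$, i.e.\ keeping straight which automorphism of the root datum is induced by the Galois action on $\mathbf{G}$ versus on $\widehat{\mathbf{G}}$ and which by the $q$-power map on the residue field; everything else is bookkeeping with the two finite-order phenomena ($|W_0|$ and $\mathrm{ord}(\widehat{\vartheta})$) plus the fixed root-of-unity identifications. I would defer the Frobenius computation to the appendix as the text indicates.
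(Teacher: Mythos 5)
Your proposal is correct and follows essentially the same route as the paper: the chain $\Phi_{m}(I_k,\mathbf{G}) \leftrightarrow (\widehat{\mathbf{G}}(\overline{\mathbb{Q}}_{\ell})_{ss})^{\widehat{\vartheta}\circ\psi} \leftrightarrow (\widehat{\mathbf{T}}(\overline{\mathbb{Q}}_{\ell})/W_0)^{\widehat{\vartheta}\circ\psi} \leftrightarrow (\textsf{\textbf{T}}^{*}(\mathfrak{F})/W_0)^{F} \leftrightarrow (\textsf{\textbf{G}}^{*}(\mathfrak{F})_{ss})^{F}$, with the $p'$-root-of-unity argument enabling the change of coefficients and the appendix computation identifying $\widehat{\vartheta}\circ\psi$ with $F$. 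The only difference is cosmetic: you spell out injectivity/surjectivity of the first arrow, which the paper leaves implicit.
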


Dans le but d'étudier les représentations à coefficients dans $\overline{\mathbb{Z}}_{\ell}$ nous avons besoin de restreindre $\Phi_{m}(I_k,\mathbf{G})$. Introduisons $I_{k}^{(\ell)}:=ker\{I_{k}\rightarrow\mathbb{Z}_{\ell}(1)\}$ qui est le sous-groupe fermé maximal de $I_{k}$ de pro-ordre premier à $\ell$. Sous l'identification de la proposition \ref{proparaminertie}, $\Phi_{m}(I_k^{(\ell)},\mathbf{G})$ correspond aux $s \in (\widehat{\textnormal{\textsf{\textbf{G}}}}(\mathfrak{F})_{ss})^{F}$ d'ordre premier à $\ell$.

Pour unifier les notations, notons $I_{k}^{\Lambda}$ qui vaut $I_k$ si $\Lambda=\overline{\mathbb{Q}}_{\ell}$ et $I_{k}^{(\ell)}$ si $\Lambda = \overline{\mathbb{Z}}_{\ell}$. On obtient alors

\begin{Pro}
\label{proIdentificationParamInert}
L'identification de la proposition \ref{proparaminertie} se restreint en :
\[ \Phi_{m}(I_{k}^{\Lambda},\mathbf{G}) \longleftrightarrow \{ s \in (\textnormal{\textsf{\textbf{G}}}^{*}(\mathfrak{F})_{ss})^{F}, s \text{ d'ordre inversible dans } \Lambda\}.\]
\end{Pro}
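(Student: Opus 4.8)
Le plan est de reprendre la discussion précédant la Proposition \ref{proparaminertie} en gardant trace de l'ordre des éléments à chaque étape. Rappelons que la Proposition \ref{proparaminertie} identifie $\Phi_{m}(I_k,\mathbf{G})$ avec $(\textsf{\textbf{G}}^{*}(\mathfrak{F})_{ss})^{F}$ via la chaîne de bijections $\Phi_{m}(I_k,\mathbf{G}) \leftrightarrow (\widehat{\mathbf{T}}(\overline{\mathbb{Q}}_{\ell})/W_{0})^{\widehat{\vartheta}\circ\psi} \leftrightarrow (\textsf{\textbf{T}}^{*}(\mathfrak{F})/W_{0})^{F} \leftrightarrow (\textsf{\textbf{G}}^{*}(\mathfrak{F})_{ss})^{F}$. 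Il s'agit donc de vérifier que, sous cette chaîne, la sous-partie $\Phi_{m}(I_{k}^{\Lambda},\mathbf{G})$ correspond exactement aux classes de conjugaison semi-simples d'ordre inversible dans $\Lambda$ ; pour $\Lambda = \overline{\mathbb{Q}}_{\ell}$ il n'y a rien à faire puisque tout élément de $\textsf{\textbf{G}}^{*}(\mathfrak{F})_{ss}$ est d'ordre premier à $p$, donc inversible dans $\overline{\mathbb{Q}}_{\ell}$, et $I_{k}^{\overline{\mathbb{Q}}_{\ell}} = I_k$. Le cas substantiel est $\Lambda = \overline{\mathbb{Z}}_{\ell}$, où $I_{k}^{(\ell)} = \ker\{I_k \to \mathbb{Z}_{\ell}(1)\}$, et où « inversible dans $\overline{\mathbb{Z}}_{\ell}$ » signifie « d'ordre premier à $\ell$ ».

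Premièrement, je décrirais explicitement un morphisme continu $I_{k}^{(\ell)} \to {}^{L}\mathbf{G}(\overline{\mathbb{Q}}_{\ell})$ admissible et modéré. Comme $I_k/P_k \cong \prod_{r\neq p} \mathbb{Z}_r(1)$ et $I_k^{(\ell)}/P_k \cong \prod_{r\neq p,\ell} \mathbb{Z}_r(1)$, un tel morphisme (trivial sur $P_k$, à valeurs dans $\overline{\mathbb{Q}}_{\ell}$ discret) est déterminé par l'image d'un générateur topologique, c'est-à-dire par un élément semi-simple $s \in \widehat{\mathbf{G}}(\overline{\mathbb{Q}}_{\ell})$ d'ordre fini premier à $p$ \emph{et} premier à $\ell$ (un quotient continu de $\prod_{r\neq p,\ell}\mathbb{Z}_r(1)$ vers un groupe discret se factorise par un quotient fini d'ordre premier à $p\ell$). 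La condition d'extensibilité à un $L$-morphisme de $\Phi(\mathbf{G})$ est la même que dans le cas de $I_k$ : il faut l'existence d'un $f$ semi-simple avec $Ad(f)\circ\widehat{\vartheta}\circ\psi(s) = s$, ce qui se traduit, via $(\widehat{\mathbf{T}}(\overline{\mathbb{Q}}_{\ell})/W_0) \xrightarrow{\sim} \widehat{\mathbf{G}}(\overline{\mathbb{Q}}_{\ell})_{ss}$, par $s \in (\widehat{\mathbf{T}}(\overline{\mathbb{Q}}_{\ell})/W_0)^{\widehat{\vartheta}\circ\psi}$. On obtient ainsi que $\Phi_{m}(I_k^{(\ell)},\mathbf{G})$ s'identifie au sous-ensemble de $(\widehat{\mathbf{T}}(\overline{\mathbb{Q}}_{\ell})/W_0)^{\widehat{\vartheta}\circ\psi}$ formé des classes d'ordre premier à $\ell$.

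Deuxièmement, je montrerais que la restriction $\Phi_{m}(I_k,\mathbf{G}) \to \Phi_{m}(I_k^{(\ell)},\mathbf{G})$, $\phi \mapsto \phi_{\mid I_k^{(\ell)}}$, correspond bien, sous l'identification de \ref{proparaminertie}, à l'inclusion du sous-ensemble des éléments d'ordre premier à $\ell$ : un $\phi$ modéré sur $I_k$ de paramètre $s$ se restreint à $I_k^{(\ell)}$ en le morphisme de paramètre $s^{(\ell)}$, la partie $\ell$-régulière de $s$ — mais ici on veut plutôt décrire l'image dans $\Phi_m(I_k^{(\ell)},\mathbf{G})$ de l'ensemble \emph{ambiant} $\Phi_m(I_k^{(\ell)},\mathbf{G})$, et ce qu'il faut est simplement la bijection du premier point. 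Enfin, le dernier chaînon : le système compatible de racines de l'unité $(\mathbb{Q}/\mathbb{Z})_{p'} \xrightarrow{\sim} \mathfrak{F}^{\times}$ et $(\mathbb{Q}/\mathbb{Z})_{p'}\hookrightarrow \overline{\mathbb{Z}}_{\ell}^{\times}$ fixé dans les notations envoie une racine de l'unité d'ordre $m$ (premier à $p$) sur une racine d'ordre $m$ ; donc l'identification $(\widehat{\mathbf{T}}(\overline{\mathbb{Q}}_{\ell})/W_0)^{\widehat{\vartheta}\circ\psi} \leftrightarrow (\textsf{\textbf{T}}^{*}(\mathfrak{F})/W_0)^{F}$ préserve l'ordre des éléments, et il en va de même pour le passage aux classes de conjugaison semi-simples $(\textsf{\textbf{T}}^{*}(\mathfrak{F})/W_0)^{F} \leftrightarrow (\textsf{\textbf{G}}^{*}(\mathfrak{F})_{ss})^{F}$ (l'ordre d'une classe semi-simple est celui de n'importe quel représentant, lequel vit dans un tore maximal). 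On conclut que $\phi$ est dans $\Phi_m(I_k^{(\ell)},\mathbf{G})$ si et seulement si la classe semi-simple associée est d'ordre premier à $\ell$, c'est-à-dire inversible dans $\overline{\mathbb{Z}}_{\ell}$, ce qui est l'énoncé.

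L'obstacle principal, bien que modeste, est le premier point : justifier proprement qu'un morphisme \emph{continu} $I_k^{(\ell)} \to \overline{\mathbb{Q}}_{\ell}$ (topologie discrète) se factorise par un quotient fini d'ordre premier à $p\ell$, et que l'extensibilité en un $L$-paramètre admissible impose exactement la même condition $Ad(f)\circ\widehat{\vartheta}\circ\psi(s)=s$ que pour $I_k$, sans condition parasite supplémentaire venant du Frobenius. Tout le reste est un suivi formel de la préservation des ordres le long d'une chaîne d'identifications déjà établie.
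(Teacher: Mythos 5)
Your proposal is correct and follows essentially the same route as the paper, which in fact offers no separate proof: the proposition is presented as an immediate consequence of the discussion preceding Proposition \ref{proparaminertie}, the only new input being that $I_{k}^{(\ell)}/P_{k}\simeq\prod_{r\neq p,\ell}\mathbb{Z}_{r}(1)$ forces the parameter $s$ to have order prime to $p\ell$, and that the fixed compatible system of roots of unity makes every identification in the chain order-preserving. Your elaboration of the "no parasitic condition" point is sound, since the fixed-point condition $Ad(f)\circ\widehat{\vartheta}\circ\psi(s)=s$ passes to $\ell$-regular parts in both directions.
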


\subsection{Classes de conjugaison dans les quotients réductifs des groupes parahoriques}

\label{secConjParahoriques}

Nous venons de voir que l'on pouvait identifier les paramètres de l'inertie modérés avec des classes de conjugaison semi-simples dans $\textsf{\textbf{G}}^{*}$. Pour obtenir des systèmes 0-cohérents nous avons besoin de classes de conjugaison dans les quotients réductifs des groupes parahoriques. Nous construisons alors dans cette section un système d'applications compatibles $((\overline{\textsf{\textbf{G}}}^{*}_{\sigma})_{ss})^{F} \rightarrow (\textsf{\textbf{G}}^{*}(\mathfrak{F})_{ss})^{F}$, pour $\sigma \in BT$.

\bigskip

Soit $\mathbf{S}$ un tore déployé maximal, tel que $\sigma \in \mathcal{A}(\mathbf{S},k)$, où $\mathcal{A}(\mathbf{S},k)$ est l'appartement associé à $\textbf{S}$ dans $BT$. Notons $\mathbf{T}$ un $k$-tore maximal $K$-déployé contenant $\mathbf{S}$ (qui existe par \cite{BT} 5.1.12). De plus par \cite{tits_reductive} 2.6.1 $\mathcal{A}(\mathbf{S},k)=BT \cap \mathcal{A}(\mathbf{T},K)$. Notons $\sigma_{1}$ l'image canonique de $\sigma$ dans $BT(K)$.

Notons $W$ le groupe de Weyl affine de $\mathbf{G}(K)$, $W_{0}=N(\mathbf{T})/\mathbf{T}$, le groupe de Weyl de $\mathbf{G}(K)$ et $W_{\sigma_{1}}$ le groupe engendré par les réflexions des hyperplans contenant $\sigma_{1}$ dans $BT(K)$. Nous avons $W=W_{0}\ltimes T/{}^{\circ}T$, où ${}^{\circ}T$ désigne le sous-groupe borné maximal de $T$. De plus $W_{\sigma_{1}}$ est un sous-groupe de $W$, on a donc une application $W_{\sigma_{1}} \rightarrow W \rightarrow W_{0}$. Le noyau du morphisme $W=W_{0}\ltimes T/{}^{\circ}T \rightarrow W_{0}$ est un groupe sans torsion. Or $W_{\sigma_{1}}$ est un groupe fini donc l'application $W_{\sigma_{1}} \rightarrow W_{0}$ est injective et nous permet de voir $W_{\sigma_{1}}$ comme un sous-groupe de $W_{0}$.

\bigskip

Par \cite{tits_reductive} 3.4.3, $\mathcal{G}_{\sigma_{1}}$ est obtenu à partir de $\mathcal{G}_{\sigma}$ par changement de base. En particulier $\overline{\textbf{\textsf{G}}}_{\sigma_{1}}=\overline{\textbf{\textsf{G}}}_{\sigma}\times_{\mathfrak{f}}\mathfrak{F}$.

Le tore $\mathbf{S}$ (resp. $\mathbf{T}$) se prolonge en un tore de $\mathcal{G}_{\sigma}$, $\mathcal{S}_{\sigma}$ (resp. $\mathcal{T}_{\sigma}$), défini sur $\mathfrak{o}_{k}$ de fibre générique $\mathcal{S}_{\sigma,k}=S$ (resp. $\mathcal{T}_{\sigma,k}=T$). Notons $\mathsf{S}_{\sigma}$ (resp $\mathsf{T}_{\sigma}$) la fibre spéciale de $\mathcal{S}_{\sigma}$ (resp $\mathcal{T}_{\sigma}$). Alors $\mathsf{T}_{\sigma}$ est un tore maximal de $\overline{\mathsf{G}}_{\sigma}$ défini sur $\mathfrak{f}$. De plus on a que $\textbf{\textsf{T}}_{\sigma_{1}}=\textbf{\textsf{T}}_{\sigma}\times_{\mathfrak{f}}\mathfrak{F}$.

Le groupe des caractères de $\mathsf{T}_{\sigma_{1}}$, $X^{*}(\textbf{\textsf{T}}_{\sigma_{1}})$, est canoniquement isomorphe à $X=X^{*}(\mathbf{T})$, on les identifiera désormais. De plus, par \cite{tits_reductive} 3.5.1, le groupe de Weyl de $\overline{\mathsf{G}}_{\sigma_{1}}$ associé à $\mathsf{T}_{\sigma_{1}}$ est $W_{\sigma_{1}}$. L'action de $W_{\sigma_{1}}$ sur $X^{*}(\mathsf{T}_{\sigma_{1}})$ coïncide avec l'action de l'image de $W_{\sigma_{1}} \rightarrow W_{0}$ sur $X^{*}(\mathbf{T})$.

\bigskip

On obtient alors :
\[((\overline{\textsf{\textbf{G}}}^{*}_{\sigma})_{ss})^{F} \simeq \left( \textbf{\textsf{T}}^{*}_{\sigma}/W_{\sigma_{1}}\right)^{F} \simeq \left((X\otimes_{\mathbb{Z}}\mathfrak{F}^{\times})/W_{\sigma_{1}}\right)^{F}\]
Le morphisme $W_{\sigma_{1}} \rightarrow W_{0}$ induit
\[ \left((X\otimes_{\mathbb{Z}}\mathfrak{F}^{\times})/W_{\sigma_{1}}\right)^{F} \rightarrow \left((X\otimes_{\mathbb{Z}}\mathfrak{F}^{\times})/W_{0}\right)^{F}.\]
Et de même que précédemment, on a un isomorphisme
\[\left((X\otimes_{\mathbb{Z}}\mathfrak{F}^{\times})/W_{0}\right)^{F} \simeq (\textsf{\textbf{G}}^{*}(\mathfrak{F})_{ss})^{F}.\]
On vient donc de construire une application
\[ \tilde{\psi}_{\sigma} : ((\overline{\textsf{\textbf{G}}}^{*}_{\sigma})_{ss})^{F} \rightarrow (\textsf{\textbf{G}}^{*}(\mathfrak{F})_{ss})^{F}\]

\begin{Lem}
L'application $\tilde{\psi}_{\sigma}$ est indépendante du choix du tore $\textbf{S}$.
\end{Lem}

\begin{proof}
Soit $\textbf{S}'$ un autre tore déployé maximal tel que $\sigma \in \mathcal{A}(\textbf{S}')$. Nous utiliserons la notation ' pour les éléments se rapportant à  $\textbf{S}'$.

D'après \cite{BT} 4.6.28, $G_{\sigma_1}^{\circ}$ permute transitivement les appartements de $BT(K)$ contenant  $\sigma_1$. Ainsi, $\textbf{T}$ et $\textbf{T}'$ sont conjugués par un élément $g \in G_{\sigma_1}^{\circ}$, c'est à dire $\textbf{T}'=g \textbf{T}g^{-1}$. Comme $\mathbf{T}$ et $\mathbf{T}'$ sont deux $k$-tores, $g$ vérifie que $g^{-1}F(g) \in N(\mathbf{G},\mathbf{T})$, le normalisateur de $\mathbf{T}$ dans $\mathbf{G}$. La conjugaison par $g$, $Ad(g)$, induit alors un isomorphisme $X \longrightarrow X':=X^{*}(\mathbf{T}')$. De plus comme elle envoie $\mathcal{A}(\textbf{T},K)$ sur $\mathcal{A}(\textbf{T}',K)$ et que les morphismes $W_{\sigma_{1}} \longrightarrow W_{0}$ et $W_{\sigma_{1}}' \longrightarrow W_{0}'$ sont définis à partir des racines, on a le diagramme commutatif
\[ \xymatrix{
W_{\sigma_{1}} \ar@{->}[r] \ar@{->}[d]^{Ad(g)} & W_{0} \ar@{->}[d]^{Ad(g)}\\
W_{\sigma_{1}}' \ar@{->}[r] & W_{0}' }\]
et donc le diagramme commutatif
\[ \xymatrix{
\left((X\otimes_{\mathbb{Z}}\mathfrak{F}^{\times})/W_{\sigma_{1}}\right)^{F} \ar@{->}[r] \ar@{->}[d]^{Ad(g)} & \left((X\otimes_{\mathbb{Z}}\mathfrak{F}^{\times})/W_{0}\right)^{F} \ar@{->}[d]^{Ad(g)}\\
\left((X'\otimes_{\mathbb{Z}}\mathfrak{F}^{\times})/W_{\sigma_{1}}'\right)^{F} \ar@{->}[r] & \left((X'\otimes_{\mathbb{Z}}\mathfrak{F}^{\times})/W_{0}'\right)^{F} }\]

L'application $G_{\sigma_1}^{\circ} \rightarrow \overline{\textsf{\textbf{G}}}_{\sigma}$  envoie $g$ sur un élément que l'on note $\overline{g}$. De plus nous savons que l'action par conjugaison par $\overline{g}$ qui envoie $X_{*}(\textbf{\textsf{T}}_{\sigma})$ sur $X_{*}(\textbf{\textsf{T}}'_{\sigma})$ coïncide avec l'action par conjugaison par $g$ qui envoie $X$ sur $X'$.

La conjugaison par $\overline{g} \in \overline{\textsf{\textbf{G}}}_{\sigma}$ d'un coté et par $g \in \mathbf{G}$ de l'autre, induit les deux diagrammes commutatifs suivants (lemme \ref{lemconjugaisondual}) :
\[ \xymatrix{
((\overline{\textsf{\textbf{G}}}^{*}_{\sigma})_{ss})^{F} \ar@{->}[r]^-{\sim} \ar@{=}[d] & \left((X\otimes_{\mathbb{Z}}\mathfrak{F}^{\times})/W_{\sigma_{1}}\right)^{F} \ar@{->}[d]^{Ad(g)}\\
((\overline{\textsf{\textbf{G}}}^{*}_{\sigma})_{ss})^{F} \ar@{->}[r]^-{\sim} & \left((X'\otimes_{\mathbb{Z}}\mathfrak{F}^{\times})/W_{\sigma_{1}}'\right)^{F}}
\xymatrix{
\left((X\otimes_{\mathbb{Z}}\mathfrak{F}^{\times})/W_{0}\right)^{F} \ar@{->}[d]^{Ad(g)} \ar@{->}[r]^-{\sim} &(\textsf{\textbf{G}}^{*}(\mathfrak{F})_{ss})^{F} \ar@{=}[d]\\
\left((X'\otimes_{\mathbb{Z}}\mathfrak{F}^{\times})/W_{0}'\right)^{F} \ar@{->}[r]^-{\sim} &(\textsf{\textbf{G}}^{*}(\mathfrak{F})_{ss})^{F}}\]
On obtient alors que le diagramme suivant commute :
\[ \xymatrix{
((\overline{\textsf{\textbf{G}}}^{*}_{\sigma})_{ss})^{F} \ar@{->}[r]^-{\sim} \ar@{=}[d] & \left((X\otimes_{\mathbb{Z}}\mathfrak{F}^{\times})/W_{\sigma_{1}}\right)^{F} \ar@{->}[r] \ar@{->}[d]^{Ad(g)} & \left((X\otimes_{\mathbb{Z}}\mathfrak{F}^{\times})/W_{0}\right)^{F} \ar@{->}[d]^{Ad(g)} \ar@{->}[r]^-{\sim} &(\textsf{\textbf{G}}^{*}(\mathfrak{F})_{ss})^{F} \ar@{=}[d]\\
((\overline{\textsf{\textbf{G}}}^{*}_{\sigma})_{ss})^{F} \ar@{->}[r]^-{\sim} & \left((X'\otimes_{\mathbb{Z}}\mathfrak{F}^{\times})/W_{\sigma_{1}}'\right)^{F} \ar@{->}[r] & \left((X'\otimes_{\mathbb{Z}}\mathfrak{F}^{\times})/W_{0}'\right)^{F} \ar@{->}[r]^-{\sim} &(\textsf{\textbf{G}}^{*}(\mathfrak{F})_{ss})^{F}}\]
Ce qui nous montre le résultat.
\end{proof}

\subsection{Classes de conjugaison dans un groupe fini}

La partie précédente nous fournit des classes de conjugaison semi-simples géométriques d'un groupe réductif connexe fini. Nous sommes plus intéressé par des classes de conjugaison rationnelles. On rappelle alors ici le lien entre les deux.

\bigskip

Dans cette sous-section $\textsf{\textbf{G}}$ désigne un groupe réductif connexe défini sur $\mathfrak{f}$. Pour un élément semi-simple $x \in \textsf{\textbf{G}}$, on note $[x]$ sa classe de conjugaison, $[x] \in \textsf{\textbf{G}}_{ss}$.

\begin{Lem}
\label{lemclassestable}
Soit $s \in (\textnormal{\textsf{\textbf{G}}}_{ss})^{F}$. Alors il existe $x \in \textnormal{\textsf{G}}:=\textnormal{\textsf{\textbf{G}}}^{F}$ tel que $s=[x]$.
\end{Lem}

\begin{proof}
$s=[y]$ avec $y \in \textsf{\textbf{G}}$. La classe de conjugaison $s$ étant $F$-stable, il existe $g \in \textsf{\textbf{G}}$ tel que $F(y)=g^{-1}yg$. L'application de Lang, $Lan:\textsf{\textbf{G}} \rightarrow \textsf{\textbf{G}}$ définie par $Lan(g)=g^{-1}F(g)$ est surjective d'après \cite{cabanes_enguehard} Théorème 7.1. Ainsi, il existe $h \in \textsf{\textbf{G}}$ tel que $g=h^{-1}F(h)$. Alors
\[ F(hyh^{-1})=F(h)F(y)F(h)^{-1}=F(h)g^{-1}ygF(h)^{-1}=hyh^{-1}\]
Ainsi $x=hyh^{-1}$ convient.
\end{proof}

\begin{Cor}
\label{proclassestable}
L'application $\mathsf{G}_{ss} \twoheadrightarrow (\textnormal{\textsf{\textbf{G}}}_{ss})^{F}$ est surjective.
\end{Cor}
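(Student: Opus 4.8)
The statement asserts that the natural map $\mathsf{G}_{ss} \to (\textsf{\textbf{G}}_{ss})^{F}$ — which sends the $\textsf{G}$-conjugacy class of a semisimple element of $\textsf{G}$ to its (automatically $F$-stable) geometric $\textsf{\textbf{G}}$-conjugacy class — is surjective. The plan is to deduce this immediately from Lemma \ref{lemclassestable}. Indeed, take any $s \in (\textsf{\textbf{G}}_{ss})^{F}$; by Lemma \ref{lemclassestable} there exists $x \in \textsf{G} = \textsf{\textbf{G}}^{F}$ with $s = [x]$, where $[x]$ denotes the geometric conjugacy class of $x$. Then $x$ is a semisimple element of $\textsf{G}$, so its $\textsf{G}$-conjugacy class is an element of $\mathsf{G}_{ss}$ whose image under the natural map is precisely $[x] = s$. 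Hence the map is surjective.

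There is essentially no obstacle here: the entire content of the corollary is Lemma \ref{lemclassestable}, and the only thing to check is the harmless bookkeeping that the $\textsf{G}$-conjugacy class of an element of $\textsf{G}$ maps to its geometric $\textsf{\textbf{G}}$-conjugacy class, and that this geometric class is $F$-stable because its representative lies in $\textsf{\textbf{G}}^{F}$. One should perhaps just remark that $\mathsf{G}_{ss}$ is by the conventions of the \emph{Notations} section the set of semisimple $\textsf{G}$-conjugacy classes in $\textsf{G}$, so that the map in question is well defined on the nose. If one wanted to be completely self-contained one could also recall, as in the proof of the lemma, that the surjectivity of the Lang map (\cite{cabanes_enguehard}, Théorème 7.1) is what makes an $F$-stable geometric class meet $\textsf{\textbf{G}}^{F}$; but since the lemma is already available this is not needed. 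The proof is therefore a one-line invocation of the preceding lemma.
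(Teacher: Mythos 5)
Votre démonstration est correcte et suit exactement la démarche du texte : le corollaire est une conséquence immédiate du lemme \ref{lemclassestable}, et c'est bien ainsi que l'article l'entend (aucune preuve séparée n'y est d'ailleurs donnée). La vérification de compatibilité des deux notions de classes que vous mentionnez est le seul point à expliciter, et vous le faites correctement.
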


\subsection{Systèmes 0-cohérents de classes de conjugaison associés aux paramètres de l'inertie modérés}

\label{secSystemeInertie}

On met bout à bout les résultats des sous-sections précédentes pour obtenir une application qui à un paramètre inertiel modéré associe un système de classes de conjugaison 0-cohérent.

\bigskip

En composant la proposition \ref{proclassestable}, l'application $\tilde{\psi}_{\sigma}$ et la proposition \ref{proparaminertie}, on obtient une application
\[ \psi_{\sigma} : (\overline{\mathsf{G}}^{*}_{\sigma})_{ss} \longrightarrow ((\overline{\textsf{\textbf{G}}}^{*}_{\sigma})_{ss})^{F} \overset{\tilde{\psi}_{\sigma}}{\longrightarrow} (\textsf{\textbf{G}}^{*}(\mathfrak{F})_{ss})^{F} \overset{\sim}{\longrightarrow} \Phi_{m}(I_k,\textbf{G})\]

\bigskip

Soient $\sigma, \omega \in BT$ tels que $\sigma \leq \omega$. Nous avons vu que $\overline{\mathsf{G}}_{\omega}$ est un Levi de $\overline{\mathsf{G}}_{\sigma}$. Ceci nous donne donc, comme dans la section \ref{sectiondelignelusztig}, une application $\varphi^{*}_{\omega,\sigma}:(\overline{\mathsf{G}}^{*}_{\omega})_{ss} \rightarrow (\overline{\mathsf{G}}^{*}_{\sigma})_{ss}$.

\begin{Lem}
\label{lemcompositionpsi}
Soient $\sigma, \omega \in BT$ tels que $\sigma \leq \omega$. Alors
\[ \psi_{\omega}=\psi_{\sigma} \circ \varphi^{*}_{\omega,\sigma}\]
\end{Lem}

\begin{proof}
$W_{\omega_{1}}$ est le groupe engendré par les réflexions des hyperplans contenant $\omega_{1}$ où $\omega_{1}$ est l'image canonique de $\omega$ dans $BT(K)$. Or $\sigma_{1} \leq \omega_{1}$, donc un hyperplan contenant $\omega_{1}$ contient aussi $\sigma_{1}$ et $W_{\omega_{1}}$ est un sous-groupe de $W_{\sigma_{1}}$. Ainsi le diagramme commutatif 
\[ \xymatrix{
W_{\omega_{1}} \ar@{->}[r] \ar@{->}[d]& W_{0}\\
W_{\sigma_{1}}\ar@{->}[ur]}\]
induit le diagramme commutatif
\[ \xymatrix{
\left((X\otimes_{\mathbb{Z}}\mathfrak{F}^{\times})/W_{\omega_{1}}\right)^{F} \ar@{->}[r] \ar@{->}[d]& \left((X\otimes_{\mathbb{Z}}\mathfrak{F}^{\times})/W_{0}\right)^{F}\\
\left((X\otimes_{\mathbb{Z}}\mathfrak{F}^{\times})/W_{\sigma_{1}}\right)^{F} \ar@{->}[ur]} \]
D'où la commutativité de
\[ \xymatrix{
(\overline{\mathsf{G}}^{*}_{\omega})_{ss} \ar@{->}[r] \ar@{->}[d]^{\varphi^{*}_{\omega,\sigma}}& ((\overline{\textsf{\textbf{G}}}^{*}_{\omega})_{ss})^{F} \ar@{->}[r]^-{\sim} \ar@{->}[d]^{\varphi_{\overline{\textsf{\textbf{G}}}^{*}_{\omega},\overline{\textsf{\textbf{G}}}^{*}_{\sigma}}} & \left((X\otimes_{\mathbb{Z}}\mathfrak{F}^{\times})/W_{\omega_{1}}\right)^{F} \ar@{->}[r] \ar@{->}[d]& \left((X\otimes_{\mathbb{Z}}\mathfrak{F}^{\times})/W_{0}\right)^{F}\\
(\overline{\mathsf{G}}^{*}_{\sigma})_{ss} \ar@{->}[r] & ((\overline{\textsf{\textbf{G}}}^{*}_{\sigma})_{ss})^{F} \ar@{->}[r]^-{\sim} & \left((X\otimes_{\mathbb{Z}}\mathfrak{F}^{\times})/W_{\sigma_{1}}\right)^{F} \ar@{->}[ur]}\]
et on a le résultat voulu.
\end{proof}

Soient $g \in G$ et $\sigma \in BT$, nous avons déjà vu (au début de la section \ref{sectSystemescoherents}) que la conjugaison par $g$ induisait deux applications
\[ \varphi^{*}_{g,\sigma}:(\overline{\textsf{G}}^{*}_{\sigma})_{ss} \longrightarrow (\overline{\textsf{G}}^{*}_{g\sigma})_{ss}\]
\[\boldsymbol{\varphi}^{*}_{g,\sigma}:((\overline{\textsf{\textbf{G}}}^{*}_{\sigma})_{ss})^{F} \longrightarrow ((\overline{\textsf{\textbf{G}}}^{*}_{g\sigma})_{ss})^{F}\]
\begin{Lem}
\label{lempsiconj}
Soient $g \in G$ et $\sigma \in BT$ alors
\[ \psi_{\sigma} = \psi_{g \sigma} \circ \varphi^{*}_{g,\sigma}\]
\end{Lem}

\begin{proof}
Soit $\mathbf{S}$ un tore déployé maximal tel que $\sigma \in \mathcal{A}(\mathbf{S})$. Alors si l'on pose $\mathbf{S}'=Ad(g)(\mathbf{S})$, $\mathbf{S}'$ est un tore déployé maximal tel que $g\sigma \in \mathcal{A}(\mathbf{S}')$. La conjugaison par $g$ induit un isomorphisme de $X$ vers $X'$. Le lemme \ref{lemactiongroupedual} nous donne le diagramme commutatif suivant :

\[ \xymatrix{
((\overline{\textsf{\textbf{G}}}^{*}_{\sigma})_{ss})^{F} \ar@{->}[r]^-{\sim} \ar@{->}[d]^{\boldsymbol{\varphi}^{*}_{g,\sigma}} & \left((X\otimes_{\mathbb{Z}}\mathfrak{F}^{\times})/W_{\sigma_{1}}\right)^{F} \ar@{->}[d]^{Ad(g)}\\
((\overline{\textsf{\textbf{G}}}^{*}_{g\sigma})_{ss})^{F} \ar@{->}[r]^-{\sim} & \left((X'\otimes_{\mathbb{Z}}\mathfrak{F}^{\times})/W_{g\sigma_{1}}\right)^{F}
}\]

La conjugaison par $g$ envoie les racines affines pour $\mathbf{S}$ s'annulant sur $\sigma_{1}$ sur les racines affines pour $\mathbf{S}'$ s'annulant sur $g\sigma_{1}$. On a donc le diagramme commutatif suivant

\[ \xymatrix{
W_{\sigma_{1}} \ar@{->}[r] \ar@{->}[d]^{Ad(g)} & W_{0} \ar@{->}[d]^{Ad(g)}\\
W_{g\sigma_{1}} \ar@{->}[r] & W_{0}'\\
}\]
et
\[ \xymatrix{
\left((X\otimes_{\mathbb{Z}}\mathfrak{F}^{\times})/W_{\sigma_{1}}\right)^{F} \ar@{->}[r] \ar@{->}[d]^{Ad(g)} & \left((X\otimes_{\mathbb{Z}}\mathfrak{F}^{\times})/W_{0}\right)^{F} \ar@{->}[d]^{Ad(g)}\\
\left((X'\otimes_{\mathbb{Z}}\mathfrak{F}^{\times})/W_{g\sigma_{1}}\right)^{F} \ar@{->}[r] & \left((X'\otimes_{\mathbb{Z}}\mathfrak{F}^{\times})/W_{0}'\right)^{F}\\
}\]

Enfin la conjugaison par $g$ étant un isomorphisme intérieur sur $G$, le diagramme ci-dessous commute (lemme \ref{lemconjugaisondual})

\[ \xymatrix{
\left((X\otimes_{\mathbb{Z}}\mathfrak{F}^{\times})/W_{0}\right)^{F} \ar@{->}[d]^{Ad(g)} \ar@{->}[r]^-{\sim} &(\textsf{\textbf{G}}^{*}(\mathfrak{F})_{ss})^{F} \ar@{=}[d]\\
\left((X'\otimes_{\mathbb{Z}}\mathfrak{F}^{\times})/W_{0}'\right)^{F} \ar@{->}[r]^-{\sim} &(\textsf{\textbf{G}}^{*}(\mathfrak{F})_{ss})^{F}}\]

Mis bout à bout ces diagrammes donnent la commutativité de
\[
\xymatrix{
(\overline{\textsf{G}}^{*}_{\sigma})_{ss} \ar@{->}[d] \ar@{->}[r]^{\varphi^{*}_{g,\sigma}} & (\overline{\textsf{G}}^{*}_{g\sigma})_{ss} \ar@{->}[d] \\
((\overline{\textsf{\textbf{G}}}^{*}_{\sigma})_{ss})^{F} \ar@{->}[d]^-{\sim} \ar@{->}[r]^{\boldsymbol{\varphi}^{*}_{g,\sigma}} &((\overline{\textsf{\textbf{G}}}^{*}_{g\sigma})_{ss})^{F} \ar@{->}[d]^-{\sim} \\
\left((X\otimes_{\mathbb{Z}}\mathfrak{F}^{\times})/W_{\sigma_{1}}\right)^{F} \ar@{->}[d] \ar@{->}[r]^{Ad(g)} &\left((X'\otimes_{\mathbb{Z}}\mathfrak{F}^{\times})/W_{g\sigma_{1}}\right)^{F} \ar@{->}[d]\\
\left((X\otimes_{\mathbb{Z}}\mathfrak{F}^{\times})/W_{0}\right)^{F} \ar@{->}[r]^{Ad(g)} \ar@{->}[d]^-{\sim} &\left((X'\otimes_{\mathbb{Z}}\mathfrak{F}^{\times})/W_{0}'\right)^{F} \ar@{->}[d]^-{\sim}\\
(\textsf{\textbf{G}}^{*}(\mathfrak{F})_{ss})^{F} \ar@{=}[r] & (\textsf{\textbf{G}}^{*}(\mathfrak{F})_{ss})^{F}
}\]
ce qui finit la preuve.
\end{proof}

Construisons maintenant un système 0-cohérent de classes de conjugaison.

\begin{Def}
Soient $\phi \in \Phi_{m}(I_{k}^{\Lambda},\mathbf{G})$ et $\sigma \in BT$. On définit le système de classes de conjugaison $S_{\phi}=(S_{\phi,\sigma})_{\sigma \in BT}$ par
\[ S_{\phi,\sigma}= \psi_{\sigma}^{-1}(\phi)\]
\end{Def}

\begin{Pro}
\label{prosystphi}
Soit $\phi \in \Phi_{m}(I_{k}^{\Lambda},\mathbf{G})$. Le système $S_{\phi}$ est 0-cohérent.
\end{Pro}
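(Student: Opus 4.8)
The plan is to verify directly the two conditions of Definition~\ref{defsystemsurcoherentconju}, together with the inclusion $S_{\phi,\sigma}\subseteq(\overline{\mathsf{G}}^{*}_{\sigma})_{ss,\Lambda}$, using the compatibility Lemmas~\ref{lemcompositionpsi} and~\ref{lempsiconj}.

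First I would check that $S_{\phi,\sigma}\subseteq(\overline{\mathsf{G}}^{*}_{\sigma})_{ss,\Lambda}$. Recall that $\psi_{\sigma}$ is the composite of the surjection $(\overline{\mathsf{G}}^{*}_{\sigma})_{ss}\twoheadrightarrow((\overline{\textsf{\textbf{G}}}^{*}_{\sigma})_{ss})^{F}$, the map $\tilde{\psi}_{\sigma}$, and the identification of Proposition~\ref{proparaminertie}. The first arrow does not change the order of a semisimple class; and $\tilde{\psi}_{\sigma}$ is induced, after passing to $(X\otimes_{\mathbb{Z}}\mathfrak{F}^{\times})/W_{\sigma_{1}}\to(X\otimes_{\mathbb{Z}}\mathfrak{F}^{\times})/W_{0}$, by the inclusion $W_{\sigma_{1}}\hookrightarrow W_{0}$, hence fixes the underlying element of $X\otimes_{\mathbb{Z}}\mathfrak{F}^{\times}$ and in particular preserves the order of the class. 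Since by Proposition~\ref{proIdentificationParamInert} an element $\phi\in\Phi_{m}(I_{k}^{\Lambda},\mathbf{G})$ corresponds to a class of order invertible in $\Lambda$, every $s\in\psi_{\sigma}^{-1}(\phi)$ has order invertible in $\Lambda$, which gives the required inclusion.

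For condition (2) of Definition~\ref{defsystemsurcoherentconju}, take $x\in BT_{0}$ and $\sigma\in BT$ with $x\leq\sigma$. Applying Lemma~\ref{lemcompositionpsi} to the pair $x\leq\sigma$ gives $\psi_{\sigma}=\psi_{x}\circ\varphi^{*}_{\sigma,x}$; taking preimages of $\phi$ yields $S_{\phi,\sigma}=\psi_{\sigma}^{-1}(\phi)=\varphi^{*-1}_{\sigma,x}\bigl(\psi_{x}^{-1}(\phi)\bigr)=\varphi^{*-1}_{\sigma,x}(S_{\phi,x})$, which is exactly condition (2). For condition (1), take $x\in BT_{0}$ and $g\in G$. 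Lemma~\ref{lempsiconj} with $\sigma=x$ gives $\psi_{x}=\psi_{gx}\circ\varphi^{*}_{g,x}$; taking preimages of $\phi$ gives $S_{\phi,x}=\varphi^{*-1}_{g,x}(S_{\phi,gx})$, and since $\varphi^{*}_{g,x}$ is a bijection this is equivalent to $\varphi^{*}_{g,x}(S_{\phi,x})=S_{\phi,gx}$, which is condition (1). This completes the verification.

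The argument is essentially formal once the two lemmas are in place; the only point requiring a little care is the order-invertibility check of the first step, which is where the hypothesis $\phi\in\Phi_{m}(I_{k}^{\Lambda},\mathbf{G})$ (rather than merely $\phi\in\Phi_{m}(I_{k},\mathbf{G})$) is used, and which guarantees that $S_{\phi}$ is a system of the kind considered in Definition~\ref{defsystemsurcoherentconju}.
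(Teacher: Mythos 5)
Your proof is correct and follows the same route as the paper, which simply invokes Lemma~\ref{lempsiconj} for condition (1) and Lemma~\ref{lemcompositionpsi} for condition (2); your preimage computations make explicit what the paper leaves implicit. The additional verification that $S_{\phi,\sigma}\subseteq(\overline{\mathsf{G}}^{*}_{\sigma})_{ss,\Lambda}$ via order-preservation of $\psi_{\sigma}$ and Proposition~\ref{proIdentificationParamInert} is a sound (and welcome) detail the paper omits.
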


\begin{proof}

La condition 1. de \ref{defsystemsurcoherentconju} est vérifiée par \ref{lempsiconj} et la condition 2. par \ref{lemcompositionpsi}.
\end{proof}

Ainsi par la proposition \ref{prodecompocategorie}, si l'on note $Rep_{\Lambda}^{\phi}(G):=Rep_{\Lambda}^{S_{\phi}}(G)$, alors

\begin{The}
\label{thmDecompoInertiel}
Soit $\mathbf{G}$ un groupe réductif connexe défini sur $k$ et $K$-déployé. Alors la catégorie de niveau $0$ se décompose en 

\[ Rep_{\Lambda}^{0}(G) = \prod_{\phi \in \Phi_{m}(I_{k}^{\Lambda},\mathbf{G})} Rep_{\Lambda}^{\phi}(G) \]
\end{The}

Notons que si $\mathbf{G}$ est quasi-déployé alors il est non-ramifié et possède donc un sommet hyperspécial $o$. Dans ce cas, l'application $\tilde{\psi}_{o}$ est bijective, donc $\psi_{o}$ est surjective et $Rep_{\Lambda}^{\phi}(G)$ est non vide pour tout $\phi \in \Phi_{m}(I_{k}^{\Lambda},\mathbf{G})$. Cependant, lorsque $\mathbf{G}$ n'est pas quasi-déployé, les catégories $Rep_{\Lambda}^{\phi}(G)$ peuvent être vides. Nous devons rajouter une condition de "relevance" pour avoir $Rep_{\Lambda}^{\phi}(G)$ non vide, ce que nous détaillerons dans la partie \ref{secRelevance}.

\section{Propriétés de $Rep_{\Lambda}^{\phi}(G)$}

\label{secProprieteRepPhi}

Fixons dans toute cette section un paramètre inertiel modéré $\phi \in \Phi_{m}(I_{k}^{\Lambda},\mathbf{G})$. Le but de cette section est d'étudier quelques propriétés vérifiées par $Rep_{\Lambda}^{\phi}(G)$. Rappelons qu'à $\phi$ nous avons associé dans la partie \ref{secSystemeInertie} un système 0-cohérent de classes de conjugaison $S_{\phi}$, qui permet de définir $e_{\phi}=(e_{\phi,x})_{x \in BT_{0}}$ un système  0-cohérent d'idempotents défini par $e_{\phi,x}=\sum_{s \in S_{\phi,x}} e_{x}^{s,\Lambda}$.

\subsection{Lien entre les décompositions sur $\overline{\mathbb{Z}}_{\ell}$ et $\overline{\mathbb{Q}}_{\ell}$}

\label{secLienLambda}

Au vu de la construction de $Rep_{\Lambda}^{\phi}(G)$ il est assez simple de comprendre le lien entre $\Lambda = \overline{\mathbb{Z}}_{\ell}$ et $\Lambda=\overline{\mathbb{Q}}_{\ell}$ ce que nous faisons ici.

\bigskip

Considérons ici que $\phi \in \Phi_{m}(I_{k}^{(\ell)},\mathbf{G})$. Soit $x \in BT_{0}$ et notons $S'_{\phi,x}$ l'ensemble des $s' \in (\overline{\mathsf{G}}^{*}_{x})_{ss}$ dont $s$ la partie $\ell$-régulière de $s'$ est dans $S_{\phi,x}$. Alors par construction, $e_{\phi,x}=\sum_{s \in S_{\phi,x}} e_{x}^{s,\overline{\mathbb{Z}}_{\ell}}=\sum_{s'\in S'_{\phi,x}} e_{x}^{s,\overline{\mathbb{Q}}_{\ell}}$. Prenons $s' \in (\overline{\mathsf{G}}^{*}_{x})_{ss}$ et nommons $\phi' \in \Phi_{m}(I_k,\mathbf{G})$ le paramètre inertiel qui lui est associé, c'est à dire $\phi':=\psi_{x}(s')$. Soit $s \in S_{\phi,x}$ (donc $\psi_{x}(s)=\phi$), $s$ est la partie $\ell$-régulière de $s'$ si et seulement si $\phi'_{\mid I_{k}^{(\ell)}} \sim \phi$. Le lien entre les décompositions sur $\overline{\mathbb{Z}}_{\ell}$ et $\overline{\mathbb{Q}}_{\ell}$ est alors clair

\begin{Pro}
\label{proLienLambda}
Soit $\phi \in \Phi_{m}(I_{k}^{(\ell)},\mathbf{G})$, alors
\[ Rep_{\overline{\mathbb{Z}}_{\ell}}^{\phi}(G) \cap Rep_{\overline{\mathbb{Q}}_{\ell}}(G) = \prod_{\phi'} Rep_{\overline{\mathbb{Q}}_{\ell}}^{\phi'}(G)\]
où le produit est pris sur les $\phi' \in \Phi_{m}(I_k,\mathbf{G})$ tels que $\phi'_{\mid I_{k}^{(\ell)}} \sim \phi$.
\end{Pro}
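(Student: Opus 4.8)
The plan is to reduce the statement to a purely combinatorial identity between the systems of idempotents involved, since both sides of the claimed equality are subcategories of $Rep_{\overline{\mathbb{Q}}_{\ell}}(G)$ cut out by coherent systems of idempotents, and by Theorem \ref{themoyersolleveld} (together with Proposition \ref{prodecompocategorie}) such a subcategory is entirely determined by the family of idempotents $(e_{x})_{x \in BT_{0}}$ acting on it. So it suffices to show that, for every vertex $x \in BT_{0}$, the idempotent of $\mathcal{H}_{\overline{\mathbb{Q}}_{\ell}}(G_{x})$ cutting out $Rep_{\overline{\mathbb{Z}}_{\ell}}^{\phi}(G) \cap Rep_{\overline{\mathbb{Q}}_{\ell}}(G)$ at $x$ agrees with the one cutting out $\prod_{\phi'} Rep_{\overline{\mathbb{Q}}_{\ell}}^{\phi'}(G)$ at $x$, the product being over $\phi' \in \Phi_{m}(I_k,\mathbf{G})$ with $\phi'_{\mid I_{k}^{(\ell)}} \sim \phi$.

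First I would unwind the definitions at a fixed vertex $x$. On the $\overline{\mathbb{Z}}_{\ell}$ side, $e_{\phi,x} = \sum_{s \in S_{\phi,x}} e_{x}^{s,\overline{\mathbb{Z}}_{\ell}}$ where $S_{\phi,x} \subseteq (\overline{\mathsf{G}}^{*}_{x})_{ss}$ consists of $\ell$-regular classes with $\psi_{x}(s) = \phi$; by Proposition \ref{prodelignelusztig}(2), $e_{x}^{s,\overline{\mathbb{Z}}_{\ell}} = \sum_{s' \sim_{\ell} s} e_{x}^{s',\overline{\mathbb{Q}}_{\ell}}$, so $e_{\phi,x} = \sum_{s' \in S'_{\phi,x}} e_{x}^{s',\overline{\mathbb{Q}}_{\ell}}$ with $S'_{\phi,x}$ the set of all semisimple classes whose $\ell$-regular part lies in $S_{\phi,x}$, exactly as noted in the paragraph preceding the statement. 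On the $\overline{\mathbb{Q}}_{\ell}$ side, the product $\prod_{\phi'} Rep_{\overline{\mathbb{Q}}_{\ell}}^{\phi'}(G)$ is cut out at $x$ by $\sum_{\phi'} e_{\phi',x} = \sum_{\phi'} \sum_{s' \in S_{\phi',x}} e_{x}^{s',\overline{\mathbb{Q}}_{\ell}}$, the outer sum over $\phi' \in \Phi_{m}(I_k,\mathbf{G})$ with $\phi'_{\mid I_{k}^{(\ell)}} \sim \phi$; since the $S_{\phi',x} = \psi_{x}^{-1}(\phi')$ are disjoint as $\phi'$ varies, this is $\sum_{s'} e_{x}^{s',\overline{\mathbb{Q}}_{\ell}}$ over all $s'$ with $\psi_{x}(s')$ among those $\phi'$. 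So the two idempotents coincide as soon as we know: for $s' \in (\overline{\mathsf{G}}^{*}_{x})_{ss}$, the $\ell$-regular part of $s'$ lies in $S_{\phi,x}$ if and only if $\psi_{x}(s')_{\mid I_{k}^{(\ell)}} \sim \phi$. This in turn amounts to the compatibility $\psi_{x}(s'_{\ell\text{-reg}}) = \psi_{x}(s')_{\mid I_{k}^{(\ell)}}$, i.e.\ that passing to the $\ell$-regular part on the group side corresponds, under the bijections of Section \ref{sectionParamInertie}, to restricting the inertial parameter from $I_k$ to $I_{k}^{(\ell)}$.

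The main obstacle — really the only substantive point — is this last compatibility, and I would prove it by tracing through the three maps composing $\psi_{x}$: the surjection $(\overline{\mathsf{G}}^{*}_{x})_{ss} \twoheadrightarrow ((\overline{\textsf{\textbf{G}}}^{*}_{x})_{ss})^{F}$, the map $\tilde{\psi}_{x}$ built from $W_{\sigma_1} \hookrightarrow W_0$, and the identification $(\textsf{\textbf{G}}^{*}(\mathfrak{F})_{ss})^{F} \simeq \Phi_{m}(I_k,\mathbf{G})$ of Proposition \ref{proparaminertie}. Under the last identification, a semisimple class corresponds to an $I_k$-parameter via ``Langlands for tori restricted to inertia''; restricting that parameter to $I_{k}^{(\ell)}$ (the maximal closed subgroup of pro-order prime to $\ell$) corresponds, because $I_k/P_k$ is procyclic of pro-order prime to $p$, exactly to replacing the semisimple element $s$ by its prime-to-$\ell$ part $s_{\ell\text{-reg}}$ — this is the same elementary fact already used to describe $\Phi_{m}(I_k^{(\ell)},\mathbf{G})$ inside $\Phi_{m}(I_k,\mathbf{G})$ in Proposition \ref{proIdentificationParamInert}. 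Since the first two maps of $\psi_{x}$ are induced by algebraic-group maps (a Levi inclusion $\overline{\textsf{\textbf{G}}}^{*}_{\sigma} \hookrightarrow \overline{\textsf{\textbf{G}}}^{*}_{x}$-type datum and the dual of an inclusion of Weyl groups) and such maps commute with taking $\ell$-regular parts of semisimple classes, the diagram commutes, which gives the desired equivalence and hence the equality of subcategories. I expect the write-up to be short once this diagram is spelled out; the only care needed is to check that ``$\ell$-regular part'' is well-defined and functorial along the maps $\varphi^{*}_{\omega,\sigma}$ and the quotient $\mathsf{G}_{ss} \twoheadrightarrow (\textsf{\textbf{G}}_{ss})^{F}$, which is immediate because conjugation and Frobenius both preserve the order of a semisimple element.
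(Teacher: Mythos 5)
Your proposal is correct and follows essentially the same route as the paper: the paper also reduces to the identity of idempotents at each vertex via Proposition \ref{prodelignelusztig}(2), writing $e_{\phi,x}=\sum_{s'\in S'_{\phi,x}} e_{x}^{s',\overline{\mathbb{Q}}_{\ell}}$ with $S'_{\phi,x}$ the classes whose $\ell$-regular part lies in $S_{\phi,x}$, and then identifies this condition with $\psi_{x}(s')_{\mid I_{k}^{(\ell)}}\sim\phi$. The only difference is that you spell out the compatibility of taking $\ell$-regular parts with the maps composing $\psi_{x}$, which the paper asserts without detail.
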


\subsection{Représentations irréductibles de $Rep_{\Lambda}^{\phi}(G)$ }

\label{secRepIrr}
Nous souhaitons dans cette partie décrire les représentations irréductibles qui sont dans $Rep_{\Lambda}^{\phi}(G)$.

\bigskip

Soit $\mathbf{T}$ un tore maximal non-ramifié de $\mathbf{G}$ ($\mathbf{T}$ est un $k$-tore $K$-déployé maximal de $\mathbf{G}$). Nommons $\mathbf{T}_{0}$ le tore de référence utilisé pour définir $\widehat{\vartheta}$ et $\widehat{\mathbf{G}}$. Le tore $\mathbf{T}$ étant non-ramifié il existe $g \in G^{nr}$ tel que $T^{nr}={}^{g}T_{0}^{nr}$. Dans ce cas $g^{-1}F(g) \in N(T_{0}^{nr},G^{nr})$ et définit un élément $w \in W_{0}$. Ainsi ${}^{L}\mathbf{T} \simeq \langle w\widehat{\vartheta} \rangle \ltimes \widehat{\mathbf{T}}_{0}(\overline{\mathbb{Q}}_{\ell})$. Le choix d'un relèvement $\dot{w} \in N(\widehat{\mathbf{T}}_{0},\widehat{\mathbf{G}})$ de $w$ permet alors de définir un plongement ${}^{L}\mathbf{T} \hookrightarrow {}^{L}\mathbf{G}$ par $\widehat{\mathbf{T}}_{0}(\overline{\mathbb{Q}}_{\ell}) \subseteq \widehat{\mathbf{G}}(\overline{\mathbb{Q}}_{\ell})$ et $ w\widehat{\vartheta} \mapsto (\dot{w},\widehat{\vartheta})$. Ce plongement dépend (même à $\widehat{\mathbf{G}}(\overline{\mathbb{Q}}_{\ell})$-conjugaison près) du choix du relèvement de $w$. Il induit cependant une application
\[\iota : \Phi_{m}(I_k,\mathbf{T}) \rightarrow\Phi_{m}(I_k,\mathbf{G})\]
qui elle est indépendante des choix effectués car les paramètres inertiels sont à valeurs dans $\widehat{\mathbf{T}}_{0}(\overline{\mathbb{Q}}_{\ell})$ (ou $\widehat{\mathbf{G}}(\overline{\mathbb{Q}}_{\ell})$).

Soit $\phi_{\mathbf{T}} \in \Phi_{m}(I_k,\mathbf{T})$. Notons $X:=X^{*}(\mathbf{T})$. Nous avons vu dans les sections \ref{secConjParahoriques} et \ref{secParamInertClasseConj} que l'on a une bijection $\Phi_{m}(I_k,\mathbf{T}) \simeq (X \otimes_{\mathbb{Z}} \mathfrak{F}^{\times})^{F}$. Soit $x \in \mathcal{A}(\mathbf{T},K) \cap BT_{0}$. Nous savons que l'on a également un isomorphisme $(X \otimes_{\mathbb{Z}} \mathfrak{F}^{\times})^{F} \simeq (\textsf{\textbf{T}}_{x}^{*})^{F} \simeq Hom(\textbf{\textsf{T}}_{x}^{F},\overline{\mathbb{Q}}_{\ell}^{\times})$. On associe donc à $\phi_{\mathbf{T}}$ de manière bijective un caractère $\theta_{\mathbf{T}}:\textbf{\textsf{T}}_{x}^{F} \rightarrow \overline{\mathbb{Q}}_{\ell}^{\times}$, qui se relève en un caractère de niveau 0 : $\theta_{\mathbf{T}}:{}^{0}\textbf{T}^{F} \rightarrow \overline{\mathbb{Q}}_{\ell}^{\times}$. L'association qui à $\phi_{\mathbf{T}}$ donne $\theta_{\mathbf{T}}$ est alors la correspondance de Langlands locale pour les tores restreinte à l'inertie.

\begin{The}
\label{theRepIrr}
Soit $\pi \in \text{Irr}_{\overline{\mathbb{Q}}_{\ell}}(G)$. Alors $\pi \in Rep_{\overline{\mathbb{Q}}_{\ell}}^{\phi}(G)$ si et seulement s'il existe $\mathbf{T}$ un tore maximal non-ramifié de $\mathbf{G}$, $\phi_{\mathbf{T}} \in \Phi_{m}(I_k,\mathbf{T})$ et $x \in \mathcal{A}(\mathbf{T},K) \cap BT_{0}$ tels que $\iota(\phi_{\mathbf{T}}) \sim \phi$ et $\langle \pi^{G_{x}^{+}}, \mathcal{R}_{\textbf{\textsf{T}}_{x}}^{\overline{\textsf{\textbf{G}}}_{x}}(\theta_{\mathbf{T}}) \rangle \neq 0$ (où $\pi^{G_{x}^{+}}$ est vue comme une représentation de $\overline{\textsf{G}}_{x} \simeq G_{x}^{\circ}/G_{x}^{+}$ et $\mathcal{R}_{\textbf{\textsf{T}}_{x}}^{\overline{\textsf{\textbf{G}}}_{x}}$ désigne l'induction de Deligne-Lusztig).
\end{The}

\begin{proof}
Par définition de $Rep_{\overline{\mathbb{Q}}_{\ell}}^{\phi}(G)$, comme $\pi$ est une représentation irré\-ductible, $\pi \in Rep_{\overline{\mathbb{Q}}_{\ell}}^{\phi}(G)$ si et seulement s'il existe $x \in BT_{0}$ tel que $e_{\phi,x} \pi^{G_{x}^{+}} \neq 0$. Soit $x \in BT_{0}$, alors par construction $e_{\phi,x} \pi^{G_{x}^{+}} \neq 0$ est équivalent à l'existence d'une classe de conjugaison rationnelle semi-simple $s\in S_{\phi,x}$ telle que $e_{s,\overline{\mathbb{Q}}_{\ell}}^{\overline{\textsf{G}}_{x}} \pi^{G_{x}^{+}} \neq 0$. Soit $\textsf{\textbf{T}}_{x}$ un $\mathfrak{f}$-tore maximal de $\overline{\textsf{\textbf{G}}}_{x}$ tel que $s \in (\textsf{\textbf{T}}_{x}^{*})^{F}$. Relevons $\textsf{\textbf{T}}_{x}$ en $\mathbf{T}$ un tore maximal non-ramifié de $\mathbf{G}$. Nous avons que $\Phi_{m}(I_k,\mathbf{T}) \simeq (X \otimes_{\mathbb{Z}} \mathfrak{F}^{\times})^{F} \simeq (\textsf{\textbf{T}}_{x}^{*})^{F} $ et donc $s$ correspond à $\phi_{\mathbf{T}}$ un paramètre inertiel modéré de $\mathbf{T}$. La discussion qui précède le théorème montre que $s$ est également associé au caractère $\theta_{\mathbf{T}}:\textbf{\textsf{T}}_{x}^{F} \rightarrow \overline{\mathbb{Q}}_{\ell}^{\times}$. La section \ref{secDeligneLusztig} nous dit alors que $e_{s,\overline{\mathbb{Q}}_{\ell}}^{\overline{\textsf{G}}_{x}} \pi^{G_{x}^{+}} \neq 0$ si et seulement si $\langle \pi^{G_{x}^{+}}, \mathcal{R}_{\textbf{\textsf{T}}_{x}}^{\overline{\textsf{\textbf{G}}}_{x}}(\theta_{\mathbf{T}}) \rangle \neq 0$.

On vient donc de montrer que $\pi \in Rep_{\overline{\mathbb{Q}}_{\ell}}^{\phi}(G)$ si et seulement s'il existe $\mathbf{T}$ un tore maximal non-ramifié de $\mathbf{G}$,  $x \in \mathcal{A}(\mathbf{T},K) \cap BT_{0}$, $\phi_{\mathbf{T}} \in \Phi_{m}(I_k,\mathbf{T})$ correspondant à $s \in S_{\phi,x}$ tels que $\langle \pi^{G_{x}^{+}}, \mathcal{R}_{\textbf{\textsf{T}}_{x}}^{\overline{\textsf{\textbf{G}}}_{x}}(\theta_{\mathbf{T}})\rangle \neq 0$. Pour achever la preuve du théorème il ne nous reste donc qu'à montrer que $\phi = \iota(\phi_{\mathbf{T}})$. Or cela découle du diagramme commutatif suivant

\[ \xymatrix{
(\textsf{\textbf{T}}^{*}_{x})^{F} \ar@{->}[d] \ar@{->}[r]  &(\overline{\textsf{G}}^{*}_{x})_{ss} \ar@{->}[d]\\
(X\otimes_{\mathbb{Z}}\mathfrak{F}^{\times})^{F} \ar@{->}[d] \ar@{->}[r] &\left((X\otimes_{\mathbb{Z}}\mathfrak{F}^{\times})/W_{x}\right)^{F} \ar@{->}[d]\\
(X\otimes_{\mathbb{Z}}\mathfrak{F}^{\times})^{F} \ar@{->}[r] \ar@{->}[d]^-{\sim} &\left((X\otimes_{\mathbb{Z}}\mathfrak{F}^{\times})/W_{0}\right)^{F} \ar@{->}[d]^-{\sim}\\
(\textsf{\textbf{T}}^{*}(\mathfrak{F}))^{F} \ar@{->}[r] \ar@{->}[d]^-{\sim} &(\textsf{\textbf{G}}^{*}(\mathfrak{F})_{ss})^{F} \ar@{->}[d]^-{\sim}\\
\Phi_{m}(I_k,\mathbf{T}) \ar@{->}[r]^{\iota} &\Phi_{m}(I_k,\mathbf{G})}\]
\end{proof}

Notons que le théorème précédent n'est énoncé que pour $\Lambda=\overline{\mathbb{Q}}_{\ell}$ puisque l'on peut en déduire une description de $\text{Irr}_{\overline{\mathbb{Q}}_{\ell}}(G) \cap Rep_{\overline{\mathbb{Z}}_{\ell}}^{\phi}(G)$ grâce à la proposition \ref{proLienLambda}. Notons également que pour $\Lambda=\overline{\mathbb{Z}}_{\ell}$, les objets simples de $Rep_{\overline{\mathbb{Z}}_{\ell}}^{\phi}(G)$ sont
\begin{enumerate}
\item Les objets simples de caractéristique 0 qui sont les $\pi \in \text{Irr}_{\overline{\mathbb{Q}}_{\ell}}(G) \cap Rep_{\overline{\mathbb{Z}}_{\ell}}^{\phi}(G)$ qui ne sont pas entières.
\item Les objets simples de caractéristique $\ell$ qui sont les sous-quotients simples des réductions modulo $\ell$ des $\pi \in \text{Irr}_{\overline{\mathbb{Q}}_{\ell}}(G) \cap Rep_{\overline{\mathbb{Z}}_{\ell}}^{\phi}(G)$ qui sont entières (voir le lemme 6.8 de \cite{dattempered}, les hypothèses peuvent être supprimées ici car on est en niveau 0).
\end{enumerate}

\subsection{Condition de relevance}

\label{secRelevance}

Nous avons noté précédemment que si $\mathbf{G}$ n'est pas quasi-déployé alors les catégories $Rep_{\Lambda}^{\phi}(G)$ peuvent être vides. Nous allons montrer dans cette partie que $Rep_{\Lambda}^{\phi}(G)$ est non vide si et seulement si $\phi$ est relevant, au sens suivant

\begin{Def}
Soit $\phi \in \Phi(I_{k}^{\Lambda},\mathbf{G})$ un paramètre inertiel. On dit que $\phi$ est relevant s'il existe $\varphi' \in \Phi(\mathbf{G})$ une extension de $\phi$ à $W_{k}'$ qui est relevant, c'est à dire que si l'image de $\varphi'$ est contenue dans un Levi de ${}^{L}\mathbf{G}$ alors ce dernier est relevant (au sens de \cite{borel} 3.4).
\end{Def}

Soit $\varphi \in \Phi(W_k,\mathbf{G})$ et posons $\phi:=\varphi_{|I_{k}^{\Lambda}}$. Pour $w \in W_k$, l'action par conjugaison de $\varphi(w)$ normalise $\phi(I_{k}^{\Lambda})$ donc normalise également $C_{\widehat{\mathbf{G}}}(\phi)^{\circ}$, le centralisateur connexe de l'image de $\phi$ dans $\widehat{\mathbf{G}}$. On définit alors 
\[\mathcal{M}_{\varphi}:=C_{{}^{L}\mathbf{G}}(Z(C_{\widehat{\mathbf{G}}}(\phi)^{\circ})^{\varphi(W_k),\circ})\]
qui est un Levi de ${}^{L}\mathbf{G}$ dont la partie connexe est $M_{\varphi}:=C_{\widehat{\mathbf{G}}}(Z(C_{\widehat{\mathbf{G}}}(\phi)^{\circ})^{\varphi(W_k),\circ})$.

\begin{Lem}
\label{lemFactoVarphi}
Soit $\varphi \in \Phi(W_k,\mathbf{G})$. Alors toute extension $\varphi' \in \Phi(\mathbf{G})$ de $\varphi$ à $W_{k}'$ se factorise par $\mathcal{M}_{\varphi}$. De plus il existe un $\varphi' \in \Phi(\mathbf{G})$ étendant $\phi:=\varphi_{|I_{k}}$ ne se factorisant par aucun sous Levi propre de $\mathcal{M}_{\varphi}$.
\end{Lem}

\begin{proof}
Ici, on écrira plutôt $W_k'$ sous la forme $W_k \times SL_2$. On prendra garde cependant à prendre la bonne "restriction" de $W_k \times SL_2$ à $W_k$ qui est donnée par le plongement $W_k \hookrightarrow W_k \times SL_2$, $w \mapsto (w,diag(|w|^{1/2},|w|^{-1/2}))$. Néanmoins, cela ne fait pas de différence lorsque l'on prend les restrictions à l'inertie.

Prenons $\varphi' \in \Phi(G)$ une extension de $\varphi$. Par définition $\mathcal{M}_{\varphi}$ contient $\varphi(W_k)$. De plus, $\varphi'(SL_{2})$ est contenue dans $C_{\widehat{\mathbf{G}}}(\phi)^{\circ}$ donc $\varphi'(SL_{2}) \subseteq \mathcal{M}_{\varphi}$ et par conséquent $\varphi'(W_{k}') \subseteq \mathcal{M}_{\varphi}$.

\medskip

Construisons maintenant un $\varphi'$ ne se factorisant par aucun sous Levi propre de $\mathcal{M}_{\varphi}$. Un Levi minimal de $\mathcal{M}_{\varphi}$ factorisant $\varphi'$ est obtenu en prenant le centralisateur dans $\mathcal{M}_{\varphi}$ d'un tore maximal de $C_{M_{\varphi}}(\varphi')^{\circ}$. Ainsi pour prouver la propriété demandée, il nous suffit de fabriquer un $\varphi'$ tel que $C_{M_{\varphi}}(\varphi')^{\circ} \subseteq Z(\mathcal{M}_{\varphi})$.

Soit $\phi \in \Phi(I_k,\mathbf{G})$. Prenons $\varphi$ étendant $\phi$ tel que l'automorphisme semi-simple $\theta$ de conjugaison par $\varphi(\text{Frob})$ préserve un épinglage $(C_{\widehat{\mathbf{G}}}(\phi)^{\circ},\mathbf{B},\mathbf{T},\{x_{\alpha}\}_{\alpha \in \Delta})$ de $C_{\widehat{\mathbf{G}}}(\phi)^{\circ}$. On définit $\varphi'_{|W_k}=\varphi$ (ici on considère la restriction naïve de $W_k \times SL_2$ à $W_k$) et $\varphi'_{|SL_2}:SL_{2} \rightarrow C_{\widehat{\mathbf{G}}}(\varphi)^{\circ}$ le morphisme principal de $SL_{2}$ à valeur dans $C_{\widehat{\mathbf{G}}}(\varphi)^{\circ}$ associé à l'épinglage choisi. Nous avons alors que $C_{\widehat{\mathbf{G}}}(\varphi')^{\circ}=C_{ C_{\widehat{\mathbf{G}}}(\varphi)^{\circ} }(\varphi'_{|SL_2})^{\circ} = Z(C_{\widehat{\mathbf{G}}}(\varphi)^{\circ})^{\circ}$. Pour achever la preuve il ne reste donc qu'à montrer que $Z(C_{\widehat{\mathbf{G}}}(\varphi)^{\circ})^{\circ} = Z(C_{\widehat{\mathbf{G}}}(\phi)^{\circ})^{\varphi(W_k),\circ}$. En effet, on aura alors le résultat voulu puisque $C_{M_{\varphi}}(\varphi')^{\circ} \subseteq C_{\widehat{\mathbf{G}}}(\varphi')^{\circ} = Z(C_{\widehat{\mathbf{G}}}(\phi)^{\circ})^{\varphi(W_k),\circ} \subseteq Z(\mathcal{M}_{\varphi})$.

Notons que $C_{\widehat{\mathbf{G}}}(\phi)=C_{\widehat{\mathbf{G}}}(\varphi)^{\theta}$. Pour simplifier les notations on pose $H=C_{\widehat{\mathbf{G}}}(\phi)$. Il nous reste donc à prouver que $Z(H^{\theta,\circ})^{\circ}=Z(H^{\circ})^{\theta,\circ}$. Calculons les centres ici présents. Nous avons que $Z(H^{\circ}) = \cap_{\alpha \in \Delta} \ker(\alpha)$ et par conséquent $Z(H^{\circ})^{\theta,\circ} = ((\cap_{\alpha \in \Delta} \ker(\alpha)) \cap T^{\theta,\circ})^{\circ}$. Comme $\theta$ préserve un épinglage, on a également grâce au théorème 1.8 (v) de \cite{DigneMichelgroupe}, $Z(H^{\theta,\circ}) = \cap_{\alpha \in \Delta/\theta} \ker(\alpha_{|T^{\theta,\circ}}) = \cap_{\alpha \in \Delta/\theta} \ker(\alpha) \cap T^{\theta,\circ}$, d'où le résultat voulu.
\end{proof}

On appelle tore maximal de ${}^{L}\mathbf{G}$ un sous-groupe $\mathcal{T}$ de ${}^{L}\mathbf{G}$ qui se surjecte sur $\langle \widehat{\vartheta} \rangle$ et dont l'intersection $\mathcal{T}^{\circ}$ avec $\widehat{\mathbf{G}}$ est un tore maximal de $\widehat{\mathbf{G}}$. Pour un tel tore, on notera $\widehat{\mathbf{T}}:=\mathcal{T}^{\circ}$ sa partie connexe. Nous avons la  suite exacte suivante : $\widehat{\mathbf{T}} \hookrightarrow \mathcal{T} \twoheadrightarrow \langle \widehat{\vartheta} \rangle$. Le tore $\mathcal{T}$ agit par conjugaison sur $\widehat{\mathbf{T}}$ et donc, on en déduit une action de $\langle \widehat{\vartheta} \rangle$ sur $\widehat{\mathbf{T}}$ qui nous permet de définir une $k$-structure sur $\mathbf{T}$ le dual de $\widehat{\mathbf{T}}$. On dira qu'un tore maximal $\mathcal{T}$ est relevant si le plongement $\widehat{\mathbf{T}} \hookrightarrow \widehat{\mathbf{G}}$ correspond dualement à un $k$-plongement $\mathbf{T} \hookrightarrow \mathbf{G}$. Enfin, on dira que $\mathcal{T}$ est elliptique dans ${}^{L}\mathbf{G}$ si $\mathcal{T}$ n'est contenu dans aucun Levi propre $\mathcal{M}$ de ${}^{L}\mathbf{G}$ ou de façon équivalente si $Z(\mathcal{T})^{\circ}=Z({}^{L}\mathbf{G})^{\circ}$.

%Nous avons une bijection entre les classes de $\mathbf{G}$-conjugaison de plongements de tores maximaux $\mathbf{T} \hookrightarrow \mathbf{G}$ et les classes de $\widehat{\mathbf{G}}$-conjugaison de plongements de tores maximaux $\widehat{\mathbf{T}} \hookrightarrow \widehat{\mathbf{G}}$. On dira alors que $\widehat{\mathbf{T}} \hookrightarrow \widehat{\mathbf{G}}$ est relevant s'il lui correspond dualement un $k$-plongement $\mathbf{T} \hookrightarrow \mathbf{G}$.

\begin{Lem}
\label{lemLeviRelevant}
Soit $\mathcal{M}$ un Levi de ${}^{L}\mathbf{G}$.

\begin{enumerate}
\item Si $\mathcal{M}$ contient $\mathcal{T}$, un tore maximal relevant, alors $\mathcal{M}$ est relevant.
\item Si $\mathcal{M}$ est relevant et $\mathcal{T}$ est un tore maximal elliptique de $\mathcal{M}$ alors $\mathcal{T}$ est relevant.
\end{enumerate}

\end{Lem}

\begin{proof}
\begin{enumerate}
\item (Dat) Notons $(\mathcal{M}^{\circ})_{ab}$ l'abélianisé de $\mathcal{M}^{\circ}$ qui est un tore. Le groupe $\mathcal{M}$ agit par conjugaison sur $\mathcal{M}^{\circ}$ donc sur $(\mathcal{M}^{\circ})_{ab}$. Cette action est triviale sur $\mathcal{M}^{\circ}$ donc nous donne une action de $\langle \widehat{\vartheta} \rangle$ sur $(\mathcal{M}^{\circ})_{ab}$. Le plongement $\mathcal{T}^{\circ} \rightarrow \mathcal{M}^{\circ}$ induit alors un morphisme $\langle \widehat{\vartheta} \rangle$-équivariant $\mathcal{T}^{\circ} \rightarrow (\mathcal{M}^{\circ})_{ab}$. On obtient ainsi dualement un $k$-plongement $\mathbf{S} \hookrightarrow \mathbf{T}$. Si $\mathcal{T}$ est relevant, on peut choisir un plongement $\mathbf{T} \hookrightarrow \mathbf{G}$ rationnel, et alors $C_{\mathbf{G}}(\mathbf{S})$ est un Levi rationnel, dual de $\mathcal{M}$, qui est donc relevant.

\item 

Le tore $\mathcal{T}$ de $\mathcal{M}$ nous fournit dualement un plongement $\mathbf{T} \hookrightarrow \mathbf{M}_{qd}$, où $\mathbf{M}_{qd}$ est la forme quasi-déployée de $\mathbf{M}$, un $k$-sous-groupe de Levi de $\mathbf{G}$ dual de $\widehat{\mathbf{M}}$. Comme $\mathcal{T}$ est elliptique, le rang déployé de $\mathbf{T}$ est le même que celui du centre de $\mathbf{M}_{qd}$, et par conséquent $\mathbf{T}$ est elliptique. Ainsi, il se plonge dans toutes les formes intérieures de $\mathbf{M}_{qd}$ (voir par exemple \cite{kaletha} lemme 3.2.1) donc en particulier dans $\mathbf{M}$ et donc $\mathcal{T}$ est relevant.
\end{enumerate}
\end{proof}

\begin{Lem}
\label{lemToreElliptic}
Soit $\varphi \in \Phi(W_k,\mathbf{G})$. Notons $\phi=\varphi_{|I_{k}}$ et posons $\mathcal{C}:=C_{\widehat{\mathbf{G}}}(\phi)^{\circ}\varphi(W_k)$ qui est un sous-groupe de ${}^{L}\mathbf{G}$. Alors il existe $\mathcal{T} \subseteq \mathcal{C}$ un sous-tore maximal de $\mathcal{C}$ tel que $Z(\mathcal{T})^{\circ}=Z(\mathcal{C})^{\circ}$.
\end{Lem}

\begin{proof}
Notons $\theta$ la conjugaison par $\varphi(\text{Frob})$ qui est un automorphisme semi-simple de $C:=C_{\widehat{\mathbf{G}}}(\phi)^{\circ}$. Prenons alors $(\widehat{\mathbf{T}},\widehat{\mathbf{B}})$ une paire de Borel de  $C_{\widehat{\mathbf{G}}}(\phi)^{\circ}$ stable sous $\theta$ (existe par \cite{steinberg} théorème 7.5). Nous pouvons écrire $\mathcal{C}$ sous la forme $\mathcal{C}:=C \rtimes \langle \theta \rangle$. On forme alors $\mathcal{T}:=\widehat{\mathbf{T}} \rtimes \langle \theta \rangle$ qui est un tore maximal de $\mathcal{C}$. Nous allons modifier $\mathcal{T}$ pour le rendre elliptique. Toute section $\eta$ de la suite exacte $N_{C}(\widehat{\mathbf{T}}) \hookrightarrow N_{\mathcal{C}}(\widehat{\mathbf{T}}) \twoheadrightarrow \langle \theta \rangle$ (la notation $N_{C}(\widehat{\mathbf{T}})$ signifie le normalisateur de $\widehat{\mathbf{T}}$ dans $C$ et idem pour $N_{\mathcal{C}}(\widehat{\mathbf{T}})$ avec $\mathcal{C}$) nous permet de définir un tore maximal $\mathcal{T}_{\eta}$ par $\mathcal{T}_{\eta}:=\widehat{\mathbf{T}} \cdot \eta(\langle \theta \rangle)$. Une section $\eta$ est donnée par $\eta(\theta)=n_{\theta}\theta$ où $n_{\theta} \in N_{C}(\widehat{\mathbf{T}})$. On prend alors pour $n_{\theta}$ un élément de $\theta$-coxeter, c'est à dire un élément du groupe de Weyl formé en prenant un produit de réflexions simples, une pour chaque orbite sous $\theta$. Il découle alors du lemme 7.4 (i) de \cite{springerRegular} que le tore que l'on obtient est elliptique, ce qui achève la preuve.
\end{proof}

Rappelons nous que dans la section \ref{secRepIrr} nous avons fixé un tore maximal de référence $\mathbf{T}_{0}$, qui nous a permis d'associer à $\mathbf{T}$, un tore maximal non-ramifié de $\mathbf{G}$, un élément $w \in W_0$, un tore ${}^{L}\mathbf{T}=\widehat{\mathbf{T}}_{0} \rtimes \langle w \widehat{\vartheta} \rangle$ et un plongement $\iota : {}^{L}\mathbf{T} \hookrightarrow {}^{L}\mathbf{G}$ en choisissant un relèvement $\dot{w} \in N(\widehat{\mathbf{T}}_{0},\widehat{\mathbf{G}})$ de $w$.

\begin{Pro}
\label{proEqRelevance}
Soit $\phi \in \Phi_{m}(I_k,\mathbf{G})$. Alors les propositions suivantes sont équivalentes :
\begin{enumerate}
\item $\phi$ est relevant
\item Il existe $\mathbf{T}$ un tore maximal non-ramifié de $\mathbf{G}$ et $\phi_{T} \in \Phi_{m}(I_{k},\mathbf{T})$ tel que $\phi \sim \iota \circ \phi_{T}$ où $\iota : {}^{L}\mathbf{T} \hookrightarrow {}^{L}\mathbf{G}$
\item $Rep_{\overline{\mathbb{Q}}_{\ell}}^{\phi}(G)$ est non vide
\end{enumerate}
\end{Pro}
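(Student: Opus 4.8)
\emph{Stratégie.} Je prouverais les équivalences en séparant la partie représentationnelle $(2)\Leftrightarrow(3)$, qui repose sur le théorème~\ref{theRepIrr}, de la partie duale $(1)\Leftrightarrow(2)$, qui repose sur les lemmes~\ref{lemFactoVarphi}, \ref{lemLeviRelevant} et~\ref{lemToreElliptic}. L'implication $(3)\Rightarrow(2)$ est immédiate : si $Rep_{\overline{\mathbb{Q}}_{\ell}}^{\phi}(G)$ est non vide, il contient un objet simple $\pi$, et le théorème~\ref{theRepIrr} fournit alors un tore maximal non-ramifié $\mathbf{T}$, un $\phi_{\mathbf{T}}\in\Phi_{m}(I_{k},\mathbf{T})$ et un sommet $x\in\mathcal{A}(\mathbf{T},K)\cap BT_{0}$ avec $\iota(\phi_{\mathbf{T}})\sim\phi$, ce qui est exactement $(2)$.

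\emph{$(2)\Rightarrow(3)$.} Partant de $\mathbf{T}$ et $\phi_{\mathbf{T}}$ avec $\iota(\phi_{\mathbf{T}})\sim\phi$, je choisirais un sommet $x$ de $BT$ dans $\mathcal{A}(\mathbf{T},K)$ tel que $\mathbf{T}$ induise un tore maximal $\mathsf{T}_{x}$ de $\overline{\mathsf{G}}_{x}$ (quitte à remplacer $\mathbf{T}$ par un conjugué sous $G$, ce qui ne change pas la classe de $\iota(\phi_{\mathbf{T}})$ ; c'est la situation des tores non-ramifiés qui interviennent dans la preuve du théorème~\ref{theRepIrr}). Soit $s\in(\mathsf{T}_{x}^{*})^{F}$ la classe semi-simple associée à $\phi_{\mathbf{T}}$ : le diagramme commutatif terminant la preuve du théorème~\ref{theRepIrr} donne $\psi_{x}(s)=\iota(\phi_{\mathbf{T}})=\phi$, donc $s\in S_{\phi,x}$. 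L'idempotent central $e_{x}^{s,\overline{\mathbb{Q}}_{\ell}}$ étant non nul dans $\overline{\mathbb{Q}}_{\ell}[\overline{\mathsf{G}}_{x}]$, il existe un $\overline{\mathsf{G}}_{x}$-module simple $\bar\pi$ tel que $e_{x}^{s,\overline{\mathbb{Q}}_{\ell}}\bar\pi=\bar\pi\neq 0$. En inflant $\bar\pi$ à $G_{x}^{\circ}$ et en prenant un quotient irréductible $\pi$ de l'induite compacte $\mathrm{ind}_{G_{x}^{\circ}}^{G}\bar\pi$ (non nulle et de type fini), la réciprocité de Frobenius donne un plongement $\bar\pi\hookrightarrow\pi^{G_{x}^{+}}$, d'où $e_{\phi,x}\pi^{G_{x}^{+}}\supseteq e_{x}^{s,\overline{\mathbb{Q}}_{\ell}}\bar\pi\neq 0$ et $\pi\in Rep_{\overline{\mathbb{Q}}_{\ell}}^{\phi}(G)$, qui est donc non vide.

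\emph{$(1)\Leftrightarrow(2)$.} Pour $(1)\Rightarrow(2)$, soit $\varphi'\in\Phi(\mathbf{G})$ une extension relevant de $\phi$ et $\varphi=\varphi'_{|W_{k}}$. Par le lemme~\ref{lemFactoVarphi}, $\varphi'$ se factorise par $\mathcal{M}_{\varphi}$, qui est donc relevant. Le lemme~\ref{lemToreElliptic} appliqué à $\mathcal{C}:=C_{\widehat{\mathbf{G}}}(\phi)^{\circ}\varphi(W_{k})$ fournit un tore maximal $\mathcal{T}\subseteq\mathcal{C}$ avec $Z(\mathcal{T})^{\circ}=Z(\mathcal{C})^{\circ}$ ; comme $\mathcal{C}\subseteq\mathcal{M}_{\varphi}$ et $Z(\mathcal{C})^{\circ}=Z(C_{\widehat{\mathbf{G}}}(\phi)^{\circ})^{\varphi(W_{k}),\circ}=Z(\mathcal{M}_{\varphi})^{\circ}$, le tore $\mathcal{T}$ est elliptique dans $\mathcal{M}_{\varphi}$, donc relevant par le lemme~\ref{lemLeviRelevant}(2). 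Ainsi $\mathcal{T}=\iota({}^{L}\mathbf{T})$ pour un tore maximal non-ramifié $\mathbf{T}$ de $\mathbf{G}$, et comme $\phi(I_{k})\subseteq Z(C_{\widehat{\mathbf{G}}}(\phi)^{\circ})\subseteq\mathcal{T}^{\circ}=\widehat{\mathbf{T}}$, l'application $\phi$ s'écrit $\iota\circ\phi_{\mathbf{T}}$ avec $\phi_{\mathbf{T}}\in\Phi_{m}(I_{k},\mathbf{T})$. Réciproquement, pour $(2)\Rightarrow(1)$, le plongement $\mathbf{T}\hookrightarrow\mathbf{G}$ sur $k$ montre que $\iota({}^{L}\mathbf{T})$ est un tore maximal relevant de ${}^{L}\mathbf{G}$ ; étendant $\phi_{\mathbf{T}}$ en $\varphi_{\mathbf{T}}\in\Phi(\mathbf{T})$, on pose $\varphi'=\iota\circ\varphi_{\mathbf{T}}$, qui étend $\phi$. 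Comme, d'après le lemme~\ref{lemLeviRelevant}, un Levi relevant reste relevant après agrandissement, il suffit de voir que le Levi minimal $\mathcal{M}$ factorisant $\varphi'$ contient un $\widehat{\mathbf{G}}$-conjugué de $\iota({}^{L}\mathbf{T})$, et de conclure par le lemme~\ref{lemLeviRelevant}(1) que $\mathcal{M}$, donc $\varphi'$, est relevant.

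\emph{Obstacle principal.} La difficulté se concentre sur la partie duale. Dans $(1)\Rightarrow(2)$, il faut justifier soigneusement l'égalité $Z(\mathcal{C})^{\circ}=Z(\mathcal{M}_{\varphi})^{\circ}$, c'est-à-dire que le tore elliptique produit par le lemme~\ref{lemToreElliptic} à l'intérieur de $\mathcal{C}$ reste elliptique dans le Levi relevant $\mathcal{M}_{\varphi}$, puis vérifier que le $\phi_{\mathbf{T}}$ obtenu satisfait bien la condition de cocycle qui le fait appartenir à $\Phi_{m}(I_{k},\mathbf{T})$ ; dans $(2)\Rightarrow(1)$, le point analogue est de localiser un conjugué de $\iota({}^{L}\mathbf{T})$ dans le Levi minimal factorisant $\iota\circ\varphi_{\mathbf{T}}$. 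C'est là que la structure des sous-groupes de Levi de ${}^{L}\mathbf{G}$ et les constructions de tores elliptiques des lemmes~\ref{lemToreElliptic} et~\ref{lemFactoVarphi} doivent être combinées avec le plus de soin.
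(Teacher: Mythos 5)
Votre architecture globale est celle du papier : $(2)\Leftrightarrow(3)$ via le théorème~\ref{theRepIrr} (votre construction explicite d'un $\pi$ par induction compacte depuis $G_{x}^{\circ}$ pour $(2)\Rightarrow(3)$ est un bon moyen de tirer la non-vacuité du théorème), et $(1)\Rightarrow(2)$ suit exactement la preuve du papier à partir des lemmes~\ref{lemFactoVarphi}, \ref{lemToreElliptic} et~\ref{lemLeviRelevant}(2). (Petit point : pour l'ellipticité de $\mathcal{T}$ dans $\mathcal{M}_{\varphi}$, le papier n'affirme pas $Z(\mathcal{C})^{\circ}=Z(\mathcal{M}_{\varphi})^{\circ}$ directement ; il combine $Z(\mathcal{T})^{\circ}=Z(\mathcal{C})^{\circ}\subseteq Z(\mathcal{M}_{\varphi})^{\circ}$ avec l'inclusion inverse $Z(\mathcal{M}_{\varphi})^{\circ}\subseteq Z(\mathcal{T})^{\circ}$, valable parce que $\mathcal{T}$ est un tore maximal de $\mathcal{M}_{\varphi}$ ; c'est plus sûr que l'identité que vous posez sans justification.)

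La lacune réelle est dans $(2)\Rightarrow(1)$. Vous posez $\varphi'=\iota\circ\varphi_{\mathbf{T}}$ et ramenez tout à montrer que le Levi minimal factorisant $\varphi'$ contient un $\widehat{\mathbf{G}}$-conjugué de $\iota({}^{L}\mathbf{T})$ --- étape que vous signalez vous-même comme non résolue, et qui est effectivement problématique : ce Levi minimal est $C_{{}^{L}\mathbf{G}}(S)$ pour $S$ un tore maximal de $C_{\widehat{\mathbf{G}}}(\varphi')^{\circ}$, et un tel $S$ n'a aucune raison d'être contenu dans $\widehat{\mathbf{T}}_{0}$, donc $C_{{}^{L}\mathbf{G}}(S)$ n'a aucune raison de contenir $\iota({}^{L}\mathbf{T})$. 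Le papier contourne cela en deux temps. D'abord il montre que le Levi canonique $\mathcal{M}_{\varphi}=C_{{}^{L}\mathbf{G}}(Z(C_{\widehat{\mathbf{G}}}(\phi)^{\circ})^{\varphi(W_{k}),\circ})$ contient $\iota({}^{L}\mathbf{T})$ : c'est immédiat car $\widehat{\mathbf{T}}_{0}$ est un tore maximal de $C_{\widehat{\mathbf{G}}}(\phi)^{\circ}$, donc $Z(C_{\widehat{\mathbf{G}}}(\phi)^{\circ})\subseteq\widehat{\mathbf{T}}_{0}$ et $\widehat{\mathbf{T}}_{0}\subseteq\mathcal{M}_{\varphi}$, tandis que $\varphi(\text{Frob})=\dot{w}\widehat{\vartheta}\in\mathcal{M}_{\varphi}$ ; le lemme~\ref{lemLeviRelevant}(1) donne alors que $\mathcal{M}_{\varphi}$ est relevant. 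Ensuite --- et c'est l'idée qui manque chez vous --- il invoque la \emph{seconde} assertion du lemme~\ref{lemFactoVarphi}, qui fabrique une extension $\varphi''$ de $\phi$ à $W_{k}'$ (avec une monodromie donnée par le $SL_{2}$ principal de $C_{\widehat{\mathbf{G}}}(\varphi)^{\circ}$) ne se factorisant par aucun sous-Levi propre de $\mathcal{M}_{\varphi}$ ; ce $\varphi''$ est donc relevant, et $\phi$ aussi. Avec votre choix $\iota\circ\varphi_{\mathbf{T}}$ (monodromie triviale), le paramètre peut se factoriser par un Levi propre de $\mathcal{M}_{\varphi}$ dont la relevance n'est pas acquise, et l'argument ne se referme pas.
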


\begin{proof}
L'équivalence $(2) \Leftrightarrow (3)$ est donnée par le théorème \ref{theRepIrr}. Montrons $(1) \Leftrightarrow (2)$.

Supposons $\phi$ relevant. Par définition, il existe $\varphi':W_{k}' \rightarrow {}^{L}\mathbf{G}$ relevant qui étend $\phi$. Notons $\varphi = \varphi'_{|W_k}$. Alors le lemme \ref{lemFactoVarphi} nous dit que $\mathcal{M}_{\varphi}$ factorise $\varphi'$ et est donc un Levi relevant. Le lemme \ref{lemToreElliptic} nous fournit $\mathcal{T}$ un tore maximal de $\mathcal{C}:=C_{\widehat{\mathbf{G}}}(\phi)^{\circ}\varphi(W_k)$ tel que $Z(\mathcal{T})^{\circ}=Z(\mathcal{C})^{\circ}$. Comme $\mathcal{C} \subseteq \mathcal{M}_{\varphi}$ on a également que $\mathcal{T}$ est un tore maximal de $\mathcal{M}_{\varphi}$. Maintenant $Z(\mathcal{T})^{\circ}=Z(\mathcal{C})^{\circ} \subseteq Z(\mathcal{M}_{\varphi})^{\circ}$ et comme $\mathcal{T}$ est un tore maximal de $\mathcal{M}_{\varphi}$ on a aussi $Z(\mathcal{M}_{\varphi})^{\circ} \subseteq Z(\mathcal{T})^{\circ}$ et par conséquent $Z(\mathcal{M}_{\varphi})^{\circ} = Z(\mathcal{T})^{\circ}$, c'est à dire que $\mathcal{T}$ est un tore maximal elliptique de $\mathcal{M}_{\varphi}$. Le lemme \ref{lemLeviRelevant} $(2)$ nous dit alors que $\mathcal{T}$ est un tore relevant. De plus comme $\mathcal{T} \subseteq \mathcal{C}$, ce tore factorise $\phi$ et l'on a $(2)$.

Supposons maintenant qu'il existe $\mathbf{T} \subseteq \mathbf{G}$ un tore maximal non-ramifié de $\mathbf{G}$ et $\phi_{T} \in \Phi_{m}(I_{k},\mathbf{T})$ tel que $\phi \sim \iota \circ \phi_{T}$. On rappelle que ${}^{L}\mathbf{T}=\widehat{\mathbf{T}}_{0} \rtimes \langle w \widehat{\vartheta} \rangle$. Le paramètre $\phi_{T}$ se prolonge en $\varphi_{T} \in \Phi(W_k,\mathbf{T})$ et on pose $\varphi = \iota \circ \varphi_{T} \in \Phi(W_k,G)$ qui est une extension de $\phi$ à $W_k$. Ainsi $Im(\phi) \subseteq \widehat{T}_{0}$, donc $\widehat{T}_{0} \subseteq C_{\widehat{G}}(\phi)^{\circ}$ et donc $\widehat{T}_{0} \subseteq \mathcal{M}_{\phi}$. Nous avons également que $\dot{w}\vartheta = \varphi(\text{Frob}) \in \mathcal{M}_{\phi}$. Ainsi $\iota({}^{L}\mathbf{T}) \subseteq \mathcal{M}_{\phi}$ et le lemme \ref{lemLeviRelevant} $(1)$ nous dit alors que $\mathcal{M}_{\phi}$ est relevant. De plus, le lemme \ref{lemFactoVarphi} nous construit $\varphi' \in \Phi(G)$ un paramètre étendant $\phi$ et tel que $\mathcal{M}_{\varphi}$ soit un Levi minimal contenant son image. Comme $\mathcal{M}_{\varphi}$ est relevant, on en déduit que $\varphi'$ est relevant et donc que $\phi$ est relevant.
\end{proof}

\begin{The}
Soit $\phi \in \Phi_{m}(I_{k}^{\Lambda},\mathbf{G})$. Alors $Rep_{\Lambda}^{\phi}(G)$ est non vide si et seulement si $\phi$ est relevant.
\end{The}

\begin{proof}
La proposition \ref{proEqRelevance} nous donne le résultat lorsque $\Lambda=\overline{\mathbb{Q}}_{\ell}$. Pour le cas général, notons que par la proposition \ref{proLienLambda}, $Rep_{\Lambda}^{\phi}(G)$ est non vide si et seulement s'il existe $\phi' \in \Phi_{m}(I_{k},\mathbf{G})$ tel que $\phi'_{|I_{k}^{\Lambda}} \sim \phi$ et $Rep_{\Lambda}^{\phi}(G)$ est non vide, si et seulement s'il existe $\phi' \in \Phi_{m}(I_{k},\mathbf{G})$ relevant prolongeant $\phi$, si et seulement si $\phi$ est relevant.
\end{proof}

\subsection{Compatibilité à l'induction et à la restriction parabolique}

\label{secCompaInductionParabolique}

Cette sous-partie à pour but d'étudier le comportement des catégories $Rep_{\Lambda}^{\phi}(G)$ vis à vis de l'induction et de la restriction parabolique.

\bigskip

Jusqu'à présent, nous avons considéré l'immeuble de Bruhat-Tits semi-simple, puisque celui-ci est muni d'une structure de complexe polysimplicial. Cependant dans cette section, nous souhaitons comparer l'immeuble d'un Levi et celui de $G$. L'immeuble de  Bruhat-Tits "étendu" semble alors plus approprié. Cela ne fait pas une grosse différence. En effet nous traitions la structure polysimpliciale de façon purement combinatoire. De plus, les idempotents $e_{\phi,\sigma}$ auraient très bien pu être définis pour un point quelconque $x$ de l'immeuble, on aurait alors eu que $e_{\phi,x}=e_{\phi,\sigma}$, où $\sigma$ est le plus petit polysimplexe contenant $x$. Ainsi, dans cette partie seulement, on utilisera l'immeuble de  Bruhat-Tits "étendu", que l'on notera $BT^{e}(G)$.

\bigskip

Soit $\mathbf{P}$ un $k$-sous-groupe parabolique de $\mathbf{G}$ de quotient de Levi $\mathbf{M}$ défini sur $k$. Prenons $\mathbf{S}$ un tore déployé maximal de $\mathbf{G}$ contenu dans $\mathbf{P}$ et notons $\mathbf{T}$ son centralisateur dans $\mathbf{G}$. Il existe alors un unique relèvement de $\mathbf{M}$ en un sous-groupe de $\mathbf{G}$ contenant $\mathbf{T}$. Notons $\varphi$ le système de racines de $G$ relativement à $S$ et $\varphi_{M} \subseteq \varphi$ celui de $M$. L'appartement $\mathcal{A}_{M}^{e}(\mathbf{S},k)$ de $BT^{e}(M)$ relativement à $\mathbf{S}$ est égal à $\mathcal{A}^{e}:=\mathcal{A}^{e}(\mathbf{S},k)$ mais en ne gardant que les murs associés aux racines affines dont la partie vectorielle est dans $\varphi_{M}$. Soit $x \in \mathcal{A}$, alors $M_{x}^{\circ}=M \cap G_{x}^{\circ}$ et $M_{x}^{+}=M \cap G_{x}^{+}$ (voir \cite{MoyPrasad} section 4.3). Rappelons que l'on a déjà défini $\overline{\textsf{M}}_{x} \simeq M_{x}^{\circ}/M_{x}^{+}$ et posons $\overline{\textsf{P}}_{x}$ l'image de $P_{x}:=P \cap G_{x}^{\circ}$ dans $\overline{\textsf{G}}_{x}$. 

\begin{Lem}
\label{lemPraboliqueImmeuble}
$\overline{\textsf{P}}_{x}$ est un sous-groupe parabolique de $\overline{\mathsf{G}}_{x}$ de quotient de Levi $\overline{\textsf{M}}_{x}$.
\end{Lem}

\begin{proof}
Notons $\varphi_{P}$ le sous-ensemble de $\varphi$ des racines $\alpha$ telles que $P$ soit engendré par $T$ et les $U_{\alpha}$, $\alpha \in \varphi_{P}$. Notons maintenant $\varphi_{x}$ (resp. $\varphi_{P,x}$, resp. $\varphi_{M,x}$) les racines affines passant par $x$ et dont la partie vectorielle est dans $\varphi$ (resp. $\varphi_{P}$, resp. $\varphi_{M}$). Alors $\varphi_{x}$ (resp. $\varphi_{P,x}$, resp. $\varphi_{M,x}$) est le système de racine de $\overline{\textsf{G}}_{x}$ (resp. $\overline{\textsf{P}}_{x}$, resp. $\overline{\textsf{M}}_{x}$) relativement à $S_{x}$. Choisissons maintenant une forme linéaire $f : X^{*}(\mathbf{S}) \rightarrow \mathbb{R}$ telle que $\varphi_{P}=\{\alpha \in \varphi , f(\alpha) \geq 0\}$ et $\varphi_{M}=\{\alpha \in \varphi, f(\alpha)=0\}$. Les sous-ensembles $\varphi_{P,x}$ et $\varphi_{M,x}$ vérifient alors $\varphi_{P,x}=\{\alpha \in \varphi_{x} , f(\alpha) \geq 0\}$ et $\varphi_{M,x}=\{\alpha \in \varphi_{x}, f(\alpha)=0\}$, ce qui montre que $\overline{\mathsf{P}}_{x}$ est bien un parabolique de $\overline{\mathsf{G}}_{x}$ de quotient de Levi $\overline{\mathsf{M}}_{x}$.
\end{proof}

Considérons $\widehat{\mathbf{M}}$ un dual de $\mathbf{M}$ sur $\overline{\mathbb{Q}}_{\ell}$ muni d'un plongement $\iota:{}^{L}\mathbf{M} \hookrightarrow {}^{L}\mathbf{G}$ (voir \cite{borel} section 3.4), qui induit une application $\Phi_{m}(I_{k}^{\Lambda},\mathbf{M}) \rightarrow \Phi_{m}(I_{k}^{\Lambda},\mathbf{G})$.

Commençons par vérifier la compatibilité à la restriction parabolique.

\begin{The}
\label{theRestrictionParabo}
Soit $\phi \in  \Phi_{m}(I_{k}^{\Lambda},\mathbf{G})$. Alors pour tout sous-groupe parabolique $\mathbf{P}$ de $\mathbf{G}$ ayant pour facteur de Levi $\mathbf{M}$, on a
\[ r_{P}^{G}( Rep_{\Lambda}^{\phi}(G) ) \subseteq \prod_{\phi_{M}} Rep_{\Lambda}^{\phi_{M}}(G)\]
où le produit est pris sur les $\phi_{M} \in \Phi_{m}(I_{k}^{\Lambda},\mathbf{M})$ tels que $\iota \circ \phi_{M} \sim \phi$ et $r_{P}^{G}$ désigne la restriction parabolique.
\end{The}

\begin{proof}
Soit $V \in Rep_{\Lambda}^{\phi}(G)$. La restriction parabolique préserve le niveau donc $r_{P}^{G}(V) \in Rep_{\Lambda}^{0}(M)$. Il nous suffit donc de montrer que pour $x \in \mathcal{A}^{e}_{M}$ et $\phi' \in \Phi_{m}(I_{k}^{\Lambda},\mathbf{M})$ tel que $\iota \circ \phi' \neq \phi$, on a $e_{\phi',x}r_{P}^{G}(V)=0$.

Nous avons $r_{P}^{G}(V)^{M_{x}^{+}} \simeq  r_{\overline{\mathsf{P}}_{x}}^{\overline{\textsf{G}}_{x}} (V^{G_{x}^{+}})$ (voir \cite{datFinitude} propositions 3.1 et 6.2), donc $e_{\phi',x} r_{P}^{G}(V)^{M_{x}^{+}}  \simeq e_{\phi',x} r_{\overline{\mathsf{P}}_{x}}^{\overline{\textsf{G}}_{x}} (V^{G_{x}^{+}}) \simeq e_{\phi',x} r_{\overline{\mathsf{P}}_{x}}^{\overline{\textsf{G}}_{x}}  ( e_{\iota \circ \phi',x}(V^{G_{x}^{+}}))$ (la dernière égalité provient de \ref{proidemparabolique}). Or par hypothèse $\iota \circ \phi' \neq \phi$ donc $e_{\iota \circ \phi',x}(V^{G_{x}^{+}}) = 0$ d'où le résultat.
\end{proof}

Passons maintenant à l'induction parabolique.

\begin{The}
\label{theInductionParabolique}
Soit $\phi_{M} \in \Phi_{m}(I_{k}^{\Lambda},\mathbf{M})$ et notons $\phi \in  \Phi_{m}(I_{k}^{\Lambda},\mathbf{G})$ son image par $\Phi_{m}(I_{k}^{\Lambda},\mathbf{M}) \rightarrow \Phi_{m}(I_{k}^{\Lambda},\mathbf{G})$. Alors pour tout sous-groupe parabolique $\mathbf{P}$ de $\mathbf{G}$ ayant pour facteur de Levi $\mathbf{M}$, on a
\[ i_{P}^{G}( Rep_{\Lambda}^{\phi_{M}}(M) ) \subseteq Rep_{\Lambda}^{\phi}(G)\]
où $i_{P}^{G}$ désigne l'induction parabolique.
\end{The}

\begin{proof}
Cela découle du théorème \ref{theRestrictionParabo} et du fait que $r_{P}^{G}$ est adjoint à gauche de $i_{P}^{G}$.
\end{proof}

\begin{Pro}
Soit $\phi \in  \Phi_{m}(I_{k}^{\Lambda},\mathbf{G})$. Alors si $\phi$ est discret (c'est-à-dire ne se factorise pas par un Levi rationnel propre) toutes les représentations de $Rep_{\Lambda}^{\phi}(G)$ sont cuspidales et toutes les représentations irréductibles de $Rep_{\Lambda}^{\phi}(G)$ sont supercuspidales.

De plus, si $\mathbf{G}$ est quasi-déployé, on a la réciproque pour la cuspidalité, c'est à dire que $\phi$ est discret si et seulement si $Rep_{\Lambda}^{\phi}(G)$ ne contient que des cuspidales.
\end{Pro}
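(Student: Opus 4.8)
The plan is to deduce the cuspidality claims from the parabolic restriction compatibility (Theorem \ref{theRestrictionParabo}) together with the definition of a discrete parameter. First I would recall that a representation $V \in Rep_{\Lambda}^{\phi}(G)$ is cuspidal precisely when $r_{P}^{G}(V) = 0$ for every proper parabolic $\mathbf{P}$ of $\mathbf{G}$ with Levi $\mathbf{M}$. By Theorem \ref{theRestrictionParabo}, $r_{P}^{G}(Rep_{\Lambda}^{\phi}(G))$ is contained in $\prod_{\phi_{M}} Rep_{\Lambda}^{\phi_{M}}(M)$, the product being over $\phi_{M} \in \Phi_{m}(I_{k}^{\Lambda},\mathbf{M})$ with $\iota \circ \phi_{M} \sim \phi$. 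If $\phi$ is discrete, then by definition no extension $\varphi'$ of $\phi$ to $W_k'$ factors through a proper Levi of ${}^{L}\mathbf{G}$; I need to translate this into the statement that for every proper rational Levi $\mathbf{M}$ there is no $\phi_{M} \in \Phi_{m}(I_{k}^{\Lambda},\mathbf{M})$ with $\iota \circ \phi_{M} \sim \phi$. Granting this, the product on the right-hand side of Theorem \ref{theRestrictionParabo} is empty, so $r_{P}^{G}(V) = 0$ for all proper $\mathbf{P}$, i.e. $V$ is cuspidal. Since this holds for all objects of $Rep_{\Lambda}^{\phi}(G)$, every irreducible object is in particular cuspidal, and a cuspidal irreducible that is not supercuspidal would be a subquotient of a properly parabolically induced representation; but by Theorem \ref{theInductionParabolique} any such induced representation lies in $Rep_{\Lambda}^{\iota\circ\phi_M}(G)$ with $\phi_M$ a parameter of a proper Levi, hence (by the discreteness of $\phi$, again) in a block orthogonal to $Rep_{\Lambda}^{\phi}(G)$ — contradiction. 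So all irreducibles in $Rep_{\Lambda}^{\phi}(G)$ are supercuspidal.

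The key translation step — that "$\phi$ factors through a proper rational Levi of ${}^{L}\mathbf{G}$" is equivalent to "$\phi \sim \iota \circ \phi_M$ for some proper rational $\mathbf{M}$ and some $\phi_M \in \Phi_m(I_k^\Lambda,\mathbf{M})$" — is where I would invest the most care, and I expect it to be the main obstacle. One direction is immediate: if $\phi \sim \iota\circ\phi_M$ then any extension of $\phi_M$ to $W_k'$ composed with $\iota$ gives an extension of $\phi$ factoring through $\iota({}^L\mathbf{M})$, a proper Levi. For the converse I would use the machinery already developed in Section \ref{secRelevance}: given an extension $\varphi'$ of $\phi$ factoring through a proper Levi $\mathcal{M} \subsetneq {}^L\mathbf{G}$, Lemma \ref{lemFactoVarphi} shows $\varphi'$ in fact factors through $\mathcal{M}_{\varphi}$ (with $\varphi = \varphi'_{|W_k}$), which is itself a proper Levi and, by the relevance arguments, corresponds to a rational Levi $\mathbf{M}$; restricting $\varphi'$ to $I_k^\Lambda$ and viewing it in $\Phi_m(I_k^\Lambda,\mathbf{M})$ gives the desired $\phi_M$ with $\iota\circ\phi_M \sim \phi$. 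Here one must be a little careful that the embedding $\iota$ used in Theorem \ref{theRestrictionParabo} matches the inclusion $\mathcal{M}_\varphi \hookrightarrow {}^L\mathbf{G}$ up to $\widehat{\mathbf{G}}$-conjugacy, which is true since the induced maps $\Phi_m(I_k^\Lambda,\mathbf{M}) \to \Phi_m(I_k^\Lambda,\mathbf{G})$ are independent of the choice of embedding (as recalled before Theorem \ref{theRepIrr}).

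For the converse statement in the quasi-split case, suppose $\mathbf{G}$ is quasi-split and $\phi$ is \emph{not} discrete, so $\phi \sim \iota\circ\phi_M$ for a proper rational Levi $\mathbf{M}$ and some $\phi_M \in \Phi_m(I_k^\Lambda,\mathbf{M})$. Since $\mathbf{G}$ is quasi-split, $\mathbf{M}$ can be taken quasi-split (it is a rational Levi of a quasi-split group, hence unramified up to inner forms), so by the remark following Theorem \ref{thmDecompoInertiel} the category $Rep_{\Lambda}^{\phi_M}(M)$ is non-empty; pick $0 \neq W \in Rep_{\Lambda}^{\phi_M}(M)$. Then $i_P^G(W)$ is a non-zero object of $Rep_{\Lambda}^{\phi}(G)$ by Theorem \ref{theInductionParabolique}, and it is not cuspidal (parabolic induction from a proper parabolic never lands in the cuspidal subcategory — its parabolic restriction along $P$ recovers a non-zero subquotient via the second adjunction / the geometric Lemma). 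Hence $Rep_{\Lambda}^{\phi}(G)$ contains a non-cuspidal representation, which is the contrapositive of what we want. The only mild subtlety is ensuring $i_P^G(W) \neq 0$, which holds because $i_P^G$ is exact and faithful (it has the exact left adjoint $r_P^G$ and the counit is non-trivial), so I would simply invoke exactness and faithfulness of normalized parabolic induction on smooth representations.
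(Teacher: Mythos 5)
Your proposal is correct and follows essentially the same route as the paper: cuspidality comes from Theorem \ref{theRestrictionParabo} (for discrete $\phi$ the indexing set of the product is empty, so $r_{P}^{G}$ kills everything), supercuspidality from Theorem \ref{theInductionParabolique} together with the orthogonality of the factors, and the quasi-split converse by inducing a nonzero object from the proper Levi through which $\phi$ factors (nonemptiness being guaranteed by the hyperspecial-vertex remark). The only divergence is the ``translation step'' you flag between factoring through a proper Levi of ${}^{L}\mathbf{G}$ and the existence of $\phi_{M}\in\Phi_{m}(I_{k}^{\Lambda},\mathbf{M})$ with $\iota\circ\phi_{M}\sim\phi$: the paper reads ``se factorise par un Levi rationnel propre'' as precisely the latter condition, so no such step is needed there, and your extra care is harmless but not required.
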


\begin{proof}
La cuspidalité découle immédiatement du théorème \ref{theRestrictionParabo}. Pour la supercuspidalité, remarquons que si $\phi$ est discret, alors le théorème \ref{theInductionParabolique} montre qu'une induite n'as pas de composante dans $Rep_{\Lambda}^{\phi}(G)$ et donc n'a pas de sous-quotient irréductible dans $Rep_{\Lambda}^{\phi}(G)$.

Maintenant si $\mathbf{G}$ est quasi-déployé et que $\phi$ n'est pas discret, alors $Rep_{\Lambda}^{\phi}(G)$ contient des induites d'après le théorème \ref{theInductionParabolique} (nous utilisons l'hypothèse quasi-déployé pour dire que ces facteurs sont non-nuls).
\end{proof}

Si $\mathbf{G}$ n'est pas quasi-déployé l'équivalence peut être fausse, comme le montre l'exemple de $G=D^{\times}$ où $D$ est une $k$-algèbre à division de dimension finie. Alors toutes les représentations sont cuspidales, en particulier $Rep_{\Lambda}^{1}(G)$ ne contient que des cuspidales.

Avoir $\phi$ discret n'est pas une condition nécessaire pour avoir des cuspidales (supercuspidales) dans $Rep_{\Lambda}^{\phi}(G)$. En effet, toute cuspidale unipotente se retrouvera dans $Rep_{\Lambda}^{1}(G)$.

\bigskip

Soit $\phi_{M} \in  \Phi_{m}(I_{k}^{\Lambda},\mathbf{M})$ et posons $\phi = \iota \circ \phi_{M} \in \Phi_{m}(I_{k}^{\Lambda},\mathbf{G})$. Nous venons de voir que $i_{P}^{G}$ réalise un foncteur
\[i_{P}^{G} : Rep_{\Lambda}^{\phi_{M}}(M) \longrightarrow Rep_{\Lambda}^{\phi}(G).\]
La catégorie $Rep_{\Lambda}^{\phi_{M}}(M)$ est un facteur direct de $Rep^{0}_{\Lambda}(M)$, on a donc un foncteur $e_{\phi_{M}} : Rep^{0}_{\Lambda}(M) \rightarrow Rep_{\Lambda}^{\phi_{M}}(M)$. Définissons $r_{P}^{G,\phi_{M}}:=e_{\phi_{M}} \circ r_{P}^{G}$, de sorte que $r_{P}^{G,\phi_{M}}$ soit un foncteur
\[r_{P}^{G,\phi_{M}} : Rep_{\Lambda}^{\phi}(G) \longrightarrow Rep_{\Lambda}^{\phi_{M}}(M).\]

\begin{Lem}
Le foncteur $r_{P}^{G,\phi_{M}}$ est adjoint à gauche de $i_{P}^{G}$.
\end{Lem}

\begin{proof}
Soient $V \in Rep_{\Lambda}^{\phi}(G)$ et $W \in Rep_{\Lambda}^{\phi_{M}}(M)$. Nous savons déjà que $r_{P}^{G}$ est adjoint à gauche de $i_{P}^{G}$, donc $Hom(r_{P}^{G}(V),W)=Hom(V,i_{P}^{G}(W))$. Maintenant $Rep_{\Lambda}^{\phi_{M}}(M)$ est un facteur direct de $Rep_{\Lambda}^{0}(M)$ de sorte que $Hom(r_{P}^{G}(V),W)=Hom(r_{P}^{G,\phi_{M}}(V),W)$ et on a le résultat.
\end{proof}

\begin{The}
Notons $C_{\widehat{\mathbf{G}}}(\phi)$ le centralisateur dans $\widehat{\mathbf{G}}$ de l'image de $\phi$. Alors si $C_{\widehat{\mathbf{G}}}(\phi) \subseteq \iota(\widehat{\mathbf{M}})$ la paire de foncteurs adjoints $(i_{P}^{G},r_{P}^{G,\phi_{M}})$ réalise une équivalence de catégories entre $Rep_{\Lambda}^{\phi_{M}}(M)$ et $Rep_{\Lambda}^{\phi}(G)$.
\end{The}

\begin{proof}
Soit $V \in Rep_{\Lambda}^{\phi_{M}}(M)$. Par adjonction, nous avons une application $r_{\mathbf{P}}^{\mathbf{G}}i_{\mathbf{P}}^{\mathbf{G}}(V)\rightarrow V$. Le lemme géométrique nous dit qu'elle est surjective et que son noyau $W$, admet une filtration dont les composantes du gradué associé sont isomorphes à $(i_{M \cap {}^{w}P}^{M} \circ w \circ r_{{}^{w^{-1}}P\cap M}^{M})(V)$, où $w$ parcourt un ensemble $\mathcal{W}^{P}$ de représentants particuliers dans $G$ des doubles classes $W_{M,0}\backslash W_{0}/W_{M,0}$ ne contenant pas la classe triviale ($W_{M,0}$ est le groupe de Weyl de $\mathbf{M}$). Nous souhaitons montrer que $e_{\phi_{M}}(W)=0$.

Prenons donc $w \in \mathcal{W}^{P}$ et montrons que 
\[e_{\phi_{M}}((i_{M \cap {}^{w}P}^{M} \circ w \circ r_{{}^{w^{-1}}P\cap M}^{M})(V))=0.\]
Identifions les paramètres inertiels avec des classes de conjugaison semi-simples $F$-stables. Ainsi $\phi_{M}$ correspond à $s \in (\textsf{\textbf{M}}^{*}_{ss})^{F}$ et on note $Rep_{\Lambda}^{s}(M)$ pour $Rep_{\Lambda}^{\phi_{M}}(M)$. Par le théorème \ref{theRestrictionParabo}
\[r_{{}^{w^{-1}}P\cap M}^{M}(V) \in \prod_{i=1}^{n} Rep_{\Lambda}^{s_{i}}({}^{w^{-1}}M\cap M)\]
où $\{s_{1},\cdots,s_{n}\}$ est l'image réciproque de $\{s\}$ par l'application $(({}^{w^{-1}}\textsf{\textbf{M}} \cap \textsf{\textbf{M}})^{*}_{ss})^{F} \rightarrow (\textsf{\textbf{M}}^{*}_{ss})^{F}$. Donc
\[w \circ r_{{}^{w^{-1}}P\cap M}^{M}(V) \in \prod_{i=1}^{n} Rep_{\Lambda}^{{}^{w}s_{i}}(M\cap {}^{w}M).\]
Enfin par le théorème \ref{theInductionParabolique}
\[(i_{M \cap {}^{w}P}^{M} \circ w \circ r_{{}^{w^{-1}}P\cap M}^{M})(V) \in \prod_{i=1}^{m} Rep_{\Lambda}^{t_{i}}(M)\]
où $\{t_{1},\cdots,t_{m}\}$ est l'image de $\{{}^{w}s_{1},\cdots,{}^{w}s_{n}\}$ par l'application $((\textsf{\textbf{M}} \cap {}^{w}\textsf{\textbf{M}})^{*}_{ss})^{F} \rightarrow (\textsf{\textbf{M}}^{*}_{ss})^{F}$.

On veut donc montrer qu'aucun des $t_{i}$ n'est égal à $s$. Supposons le contraire et que l'on ait un $i$ tel que $t_{i}=s$. Par construction, $t_{i}$ est dans l'une des classes de conjugaison sur $\textsf{\textbf{M}}^{*}$ des ${}^{w}(s_{j})$, donc il existe $g \in \textsf{\textbf{M}}^{*}$ et $j \in \{1,\cdots,n\}$ tels que $t_{i}={}^{gw}(s_{j})$. De même par construction des $s_{j}$, il existe $h \in \textsf{\textbf{M}}^{*}$ tel que $s_{j}={}^{h}s$. Donc $s=t_{i}={}^{gwh}(s)$ et $gwh \in C_{\textsf{\textbf{G}}^{*}}(s) \subseteq \textsf{\textbf{M}}^{*}$ (par hypothèse), ce qui est absurde car $w \notin \textsf{\textbf{M}}^{*}$. Ceci nous montre que $e_{\phi_{M}}(W)=0$ et donc que $r_{\mathbf{P}}^{\mathbf{G},\phi_{M}}i_{\mathbf{P}}^{\mathbf{G}}(V) \overset{\sim}\rightarrow V$ est un isomorphisme.

\medskip

Montrons maintenant que le foncteur $r_{\mathbf{P}}^{\mathbf{G},\phi_{M}}$ est conservatif. Ceci nous permettra de conclure grâce au lemme \ref{lemEqCategories} ci-dessous. Comme les catégories considérées sont abéliennes et que $r_{\mathbf{P}}^{\mathbf{G},\phi_{M}}$ est exact, il nous suffit de montrer que si $V\neq 0$ alors $r_{\mathbf{P}}^{\mathbf{G},\phi_{M}}(V) \neq 0$.

Prenons donc $V \in Rep_{\Lambda}^{\phi}(G)$ tel que $V\neq 0$. Il existe $x \in \mathcal{A}^{e}$ tel que $e_{\phi,x}V\neq 0$. Notons $s \in (\textsf{\textbf{M}}^{*}_{ss})^{F}$ la classe de conjugaison semi-simple correspondant à $\phi_{M}$. Comme $e_{\phi,x}V\neq 0$, il existe $s_{x} \in (\overline{\textsf{M}}^{*}_{x})_{ss}$ dont l'image par $(\overline{\textsf{\textbf{M}}}^{*}_{x})_{ss}^{F} \rightarrow (\textsf{\textbf{M}}^{*}_{ss})^{F}$ est $s$ et telle que $e_{s_{x},\Lambda}^{\overline{\textsf{G}}_{x}}V^{G_{x}^{+}}\neq 0$. L'hypothèse $C_{\textsf{\textbf{G}}^{*}}(s) \subseteq \textsf{\textbf{M}}^{*}$ peut se retraduire, si l'on voit $s$ comme un élément de $\textsf{\textbf{T}}^{*}$, de la manière suivante : si $w \in W_{0}$ est tel que $ws=s$ alors $w \in W_{M,0}$. Or l'application $(\overline{\textsf{\textbf{M}}}^{*}_{x})_{ss} \rightarrow (\textsf{\textbf{M}}^{*}_{ss})$ est définie par $\textsf{\textbf{T}}_{x}^{*}/W_{M,x} \rightarrow \textsf{\textbf{T}}^{*}/W_{M,0}$ et comme $W_{M,x}=W_{M,0} \cap W_{x}$ on en déduit que $s_{x}$ vérifie les mêmes hypothèses que $s$, c'est à dire $C_{\overline{\textsf{\textbf{G}}}_{x}^{*}}(s_{x}) \subseteq \overline{\textsf{\textbf{M}}}^{*}_{x}$. 

Maintenant, nous avons vu dans la preuve du théorème \ref{theRestrictionParabo} que 
\[e_{s_{x},\Lambda}^{\overline{\textsf{M}}_{x}}(r_{P}^{G}(V))^{M_{x}^{+}}\simeq e_{s_{x},\Lambda}^{\overline{\textsf{M}}_{x}} r_{\overline{\textsf{P}}_{x}}^{\overline{\textsf{G}}_{x}}(e_{s_{x},\Lambda}^{\overline{\textsf{G}}_{x}}(V^{G_{x}^{+}})).\]
En notant $r_{\overline{\textsf{P}}_{x}}^{\overline{\textsf{G}}_{x},s_{x}}$ le foncteur $e_{s_{x},\Lambda}^{\overline{\textsf{M}}_{x}} r_{\overline{\textsf{P}}_{x}}^{\overline{\textsf{G}}_{x}}$, on a 
\[e_{s_{x},\Lambda}^{\overline{\textsf{M}}_{x}}(r_{P}^{G}(V))^{M_{x}^{+}}\simeq r_{\overline{\textsf{P}}_{x}}^{\overline{\textsf{G}}_{x},s_{x}}(e_{s_{x},\Lambda}^{\overline{\textsf{G}}_{x}}(V^{G_{x}^{+}})).\]
Comme $C_{\overline{\textsf{\textbf{G}}}_{x}^{*}}(s_{x}) \subseteq \overline{\textsf{\textbf{M}}}^{*}_{x}$ le théorème B' de \cite{bonnafe_rouquier} nous dit que $r_{\overline{\textsf{P}}_{x}}^{\overline{\textsf{G}}_{x},s_{x}}$ réalise une équivalence de catégories. En particulier il est conservatif et comme $e_{s_{x},\Lambda}^{\overline{\textsf{G}}_{x}}(V^{G_{x}^{+}}) \neq 0$ on a 
\[e_{s_{x},\Lambda}^{\overline{\textsf{M}}_{x}}(r_{P}^{G}(V))^{M_{x}^{+}} \simeq r_{\overline{\textsf{P}}_{x}}^{\overline{\textsf{G}}_{x},s_{x}}(e_{s_{x},\Lambda}^{\overline{\textsf{G}}_{x}}(V^{G_{x}^{+}})) \neq 0\]
et donc $r_{\mathbf{P}}^{\mathbf{G},\phi_{M}}(V) \neq 0$ ce qui achève la preuve.
\end{proof}

\begin{Lem}
\label{lemEqCategories}
Soient $\mathcal{C}$, $\mathcal{D}$ deux catégories et $F : \mathcal{C} \rightarrow \mathcal{D}$ un foncteur. Si $F$ est conservatif et admet un adjoint à droite (ou à gauche) pleinement fidèle, alors $F$ réalise une équivalence de catégories.
\end{Lem}

\begin{proof}

Soit $G$ un adjoint à droite. Le cas d'un adjoint à gauche se traite de la même manière par dualité. Le foncteur $G$ étant pleinement fidèle, le morphisme $\alpha : FG \rightarrow id_{\mathcal{D}}$ est un isomorphisme naturel. Nous devons montrer que $\beta : id_{\mathcal{C}} \rightarrow GF$ est également un isomorphisme. Par les axiomes d'adjonctions, la composition
\[ F(x) \overset{F(\beta_{x})}{\longrightarrow}FGF(x) \overset{\alpha_{F(x)}}{\longrightarrow} F(x)\]
est $id_{F(x)}$ pour tout $x\in \mathcal{C}$. Comme $\alpha$ est déjà un isomorphisme, on en déduit que $F(\beta_{x})$ est un isomorphisme, et donc que $\beta_{x}$ est un isomorphisme puisque $F$ est conservatif.
\end{proof}

\subsection{Compatibilité à la correspondance de Langlands}

\label{secCompatibiliteLanglands}

Dans cette partie nous prendrons $\Lambda=\overline{\mathbb{Q}}_{\ell}$ et $k$ de caractéristique nulle. La correspondance de Langlands locale prédit une application à fibres finies $\text{Irr}_{\overline{\mathbb{Q}}_{\ell}}(G) \rightarrow \Phi(\mathbf{G})$, $\pi \mapsto \varphi_{\pi}$. Dans des cas où elle est connue, nous souhaitons vérifier que la décomposition du théorème \ref{thmDecompoInertiel} est bien compatible à cette dernière, c'est à dire que si $\pi \in \text{Irr}_{\overline{\mathbb{Q}}_{\ell}}(G)$ est une représentation de niveau 0 alors $\pi \in Rep_{\overline{\mathbb{Q}}_{\ell}}^{\phi}(G)$ avec $\varphi_{\pi | I_{k}} \sim \phi$.

La correspondance de Langlands est connue dans plusieurs cas dont : les tores (prouvé par Langlands lui-même), les représentations unipotentes des groupes $p$-adiques adjoints (\cite{lusztig}, \cite{lusztig2}) et les groupes classiques (\cite{HarrisTaylor} \cite{henniart} \cite{arthur} \cite{mok} \cite{KMSW}). La compatibilité à la correspondance de Langlands pour les tores est contenue dans le théorème \ref{theRepIrr}. Pour ce qui est des représentations unipotentes, par construction, elle appartiennent toutes à $Rep_{\overline{\mathbb{Q}}_{\ell}}^{1}(G)$. Par conséquent, dans cette section nous examinerons le cas des groupes classiques. Notons que $\mathbf{G}=GL_{n}$ a déjà été fait dans \cite{dat_equivalences_2014} section 3.2.6, on se concentrera ici sur les autres cas. Dans le but d'utiliser les résultats de \cite{StevensLust}, nous supposerons de plus que $k$ est de caractéristique résiduelle impaire.

\bigskip

Commençons par expliquer ce que l'on entend par un groupe classique. Soit $k'$ une extension de degré au plus 2 de $k$ et $\mathfrak{f}'$ son corps résiduel. Notons $\sigma$ le générateur du groupe de Galois de $k'/k$ et $N_{k'/k}:k'^{\times} \rightarrow k^{\times}$ l'application norme. Fixons un signe $\varepsilon=\pm 1$ et soit $(V,h)$ un espace $k'/k$-$\varepsilon$-hermitien non-dégénéré. Dans cette partie $G:=U(V)^{\circ}$ est la composante connexe du groupe des $k$-points du groupe réductif déterminé par $(V,h)$, c'est à dire $U(V):=\{g \in Aut_{k'}(V), h(gv,gw)=h(v,w) \text{ pour tout } v,w\in V \}$ et $U(V)^{\circ}:=\{g \in U(V), N_{k'/k} det_{k'}(g)=1\}$. 

\medskip

La correspondance de Langlands pour les groupes classiques (restreinte à $W_{k}$) est compatible à l'induction parabolique (voir \cite{moussaoui} théorème 4.9). Il en est de même pour $Rep_{\overline{\mathbb{Q}}_{\ell}}^{\phi}(G)$ d'après la section \ref{secCompaInductionParabolique}. Nous pouvons ainsi nous restreindre ici aux représentations irréductibles cuspidales. On pose $\mathcal{A}(G)$ l'ensemble des classes d'équivalence des représentations irréductibles cuspidales de $G$ et $\mathcal{A}_{[0]}(G)$ le sous-ensemble des représentations de niveau 0. Soit $\rho$ une représentation irréductible cuspidale de $GL_{n}(k')$. On définit la représentation $\rho^{\sigma}$ de $GL_{n}(k')$ par $\rho^{\sigma}(g)=\rho({}^{t}\sigma(g^{-1}))$, où ${}^{t}g$ désigne la transposée de $g$. On dit alors que $\rho$ est auto-duale si $\rho^{\sigma} \simeq \rho$ et on pose $\mathcal{A}_{n}^{\sigma}(k')$ l'ensemble des représentations irréductibles cuspidales auto-duales de $GL_{n}(k')$, $\mathcal{A}^{\sigma}(k'):=\cup_{n \geq 1} \mathcal{A}_{n}^{\sigma}(k')$ et $\mathcal{A}_{[0]}^{\sigma}(k')$ le sous-ensemble de $\mathcal{A}^{\sigma}(k')$ composé des représentations de niveau 0.

Notons $H=H^{-} \oplus H^{+}$ le plan hyperbolique, c'est à dire, $H^{\pm}$ est un $k'$-espace vectoriel de dimension 1 de base $e_{\pm}$ et $H$ est muni de la forme $h_{H}$ donnée par $h_{H}(\lambda_{-}e_{-}+\lambda_{+}e_{+},\mu_{-}e_{-}+\mu_{+}e_{+})=\lambda_{-}\sigma(\mu_{+})+\varepsilon\lambda_{+}\sigma(\mu_{-})$. Pour un entier $n \geq 0$, on pose $V_{n}:=V \oplus nH$ muni de la forme $h_{n}:=h \oplus h_{H} \oplus \cdots \oplus h_{H}$ et $G_{n}:=U(V_{n})^{\circ}$. Le stabilisateur dans $G_{n}$ de la décomposition $V_{n}=nH^{-} \oplus V \oplus nH^{+}$ est un sous-groupe de Levi $M_n$ de $G_n$ et l'on a un isomorphisme $M_n \simeq GL_n(k') \times G$. Le stabilisateur de $nH^{-}$ est quand à lui un sous-groupe parabolique $P_n$ de $G_n$ de facteur de Levi $M_n$. Ainsi si $\rho \in \mathcal{A}_{n}^{\sigma}(k')$ et $\pi \in \mathcal{A}(G)$, on peut former $\rho \otimes \pi$ que l'on considère comme une représentation de $M_n$. Définissons pour $s \in \mathbb{C}$, $I(\rho,\pi,s):=i_{P_{n}}^{G_{n}} \rho\mid det(\cdot)\mid_{k'}^{s} \otimes \pi$.

\begin{The}[\cite{silberger}, théorème 1.6] Si $I(\rho,\pi,s)$ est réductible pour un certain $s\in \mathbb{R}$ alors il existe un unique réel positif $s_{\pi}(\rho)$ tel que, $I(\rho,\pi,s)$ est réductible si et seulement si $s=\pm s_{\pi}(\rho)$.
\end{The}
Si $I(\rho,\pi,s)$ est irréductible pour tout $s\in \mathbb{R}$, on pose alors $s_{\pi}(\rho)=0$. Définissons alors l'ensemble de Jordans $Jord(\pi)$ par 
\[ Jord(\pi) := \{ (\rho,m) \in \mathcal{A}^{\sigma}(k') \times \mathbb{N}^{*}, 2s_{\pi}(\rho) -(m+1) \in 2 \mathbb{N} \} \]

Rappelons que comme $G$ est un groupe classique on a la correspondance de Langlands locale, $\text{Irr}_{\overline{\mathbb{Q}}_{\ell}}(G) \rightarrow \Phi(\mathbf{G})$, $\pi \mapsto \varphi_{\pi}$. Jusqu'à présent nos paramètres de Langlands $\varphi \in \Phi(\mathbf{G})$ étaient de la forme $\varphi : W'_{k} \rightarrow {}^{L}\textbf{G}(\overline{\mathbb{Q}}_{\ell})$ avec $W_{k}'=W_{k}\ltimes \overline{\mathbb{Q}}_{\ell}$. Dans cette section seulement, on considére une autre version du groupe de Weil-Deligne : $W_{k}' \simeq W_{k} \times SL_{2}(\overline{\mathbb{Q}}_{\ell})$. Ainsi les paramètres de Langlands sont de la forme $\varphi:W_{k} \times SL_{2}(\overline{\mathbb{Q}}_{\ell}) \rightarrow {}^{L}\mathbf{G}$. Notons $N_{\widehat{\mathbf{G}}}$ la dimension de l'espace vectoriel sur lequel agit naturellement $\widehat{\mathbf{G}}$. Alors si $\varphi:W_{k} \times SL_{2}(\overline{\mathbb{Q}}_{\ell}) \rightarrow {}^{L}\mathbf{G}$ est un paramètre de Langlands, on notera $\tilde{\varphi}:W_{k'} \times SL_{2}(\overline{\mathbb{Q}}_{\ell}) \rightarrow GL_{N_{\widehat{\mathbf{G}}}}(\overline{\mathbb{Q}}_{\ell})$ l'application obtenue en restreignant $\varphi$ à $W_{k'}$ et en la composant avec $\widehat{\mathbf{G}} \hookrightarrow GL_{N_{\widehat{\mathbf{G}}}}(\overline{\mathbb{Q}}_{\ell})$. Il est attendu (et connu au moins dans le cas où $\mathbf{G}$ est quasi-déployé, voir par exemple \cite{moeglin}) que le paramètre $\tilde{\varphi}_{\pi}$ soit décrit grâce à l'ensemble de Jordan $Jord(\pi)$ de la façon suivante

\begin{The}[\cite{moeglin}]
\label{thmLLC}
Si $\mathbf{G}$ est un groupe classique quasi-déployé et $\pi \in \mathcal{A}(G)$, on a
\[\tilde{\varphi}_{\pi}=\bigoplus_{(\rho,m) \in Jord(\pi)} \varphi_{\rho} \otimes st_{m}\]
où $\varphi_{\rho}$ est la représentation irréductible de $W_{k'}$ correspondant à $\rho$ via la correspondance de Langlands locale pour $GL_{n}$ et $st_{m}$ est la représentation irréductible $m$-dimensionnelle de $SL_{2}(\overline{\mathbb{Q}}_{\ell})$.
\end{The}

Le résultat précédent étant connu dans le cas où $\mathbf{G}$ est quasi-déployé et le théorème \ref{thmDecompoInertiel} nécessitant l'hypothèse $K$-déployé, nous nous limiterons ici au cas où $\mathbf{G}$ est un groupe classique non-ramifié, c'est à dire un groupe spécial orthogonal impair $SO_{2n+1}$, un groupe spécial orthogonal pair $SO_{2n}$ (groupe spécial orthogonal déployé) ou $SO_{2n}^{*}$ (groupe spécial orthogonal quasi-déployé associé à une extension quadratique non-ramifiée $k'/k$), un groupe symplectique $Sp_{2n}$ ou un groupe unitaire $U_{n}(k'/k)$ où $k'$ est une extension non-ramifiée de $k$.

\medskip

Pour comprendre $\varphi_{\pi}$ nous avons donc besoin de comprendre $Jord(\pi)$ et en particulier $s_{\pi}(\rho)$. Nous allons pour cela nous appuyer sur les résultats obtenus dans \cite{StevensLust}.

Soit $\pi \in \mathcal{A}_{[0]}(G)$. Il existe alors $x \in BT_{0}$ et $s \in (\overline{\textsf{G}}_{x}^{*})_{ss}$ tels que $e_{s,\overline{\mathbb{Q}}_{\ell}}^{\overline{\textsf{G}}_{x}} \pi^{G_{x}^{+}}$. Comme nous sommes dans le cas où $\mathbf{G}$ est un groupe classique, le groupe $\overline{\textsf{G}}_{x}$ se décompose en un produit de deux groupes $\overline{\textsf{G}}_{x} \simeq \overline{\textsf{G}}_{x,1} \times \overline{\textsf{G}}_{x,2}$, où les $\overline{\textsf{G}}_{x,i}$ sont des groupes classiques (voir par exemple \cite{StevensLust} section 2). Ainsi $s$ correspond via cet isomorphisme à $(s_1,s_2)$ où $s_i \in (\overline{\textsf{G}}_{x,i}^{*})_{ss}$. Soit $P \in \mathfrak{f}'[X]$ un polynôme unitaire. Désignons par $\tau$ le générateur du groupe de Galois $Gal(\mathfrak{f}'/\mathfrak{f})$. On définit $P^{\tau}(X):= \tau(P(0))^{-1}X^{deg(P)}\tau(P)(1/X)$ et on dit que $P$ est auto-dual si $P=P^{\tau}$. Le polynôme caractéristique $P_{s_i}$ de $s_i$ est alors un polynôme unitaire auto-dual et on l'écrit (comme dans \cite{StevensLust} section 7) $P_{s_i}(X)=\prod_{P} P(X)^{a_{P}^{(i)}}$, où le produit est pris sur l'ensemble des polynômes unitaires irréductibles auto-duaux sur $\mathfrak{f}'$ (une telle écriture est possible car la série de Deligne-Lusztig associée à $s_{i}$ contient une représentation cuspidale et que si $P_{s_{i}}$ contenait un facteur de la forme $P(X)P^{\tau}(X)$ avec $P$ irreductible et $P\neq P^{\tau}$ alors le centralisateur de $s_{i}$ dans $(\overline{\textsf{G}}_{x,i}^{*})_{ss}$ serait contenu dans un Levi rationnel propre).

Soit $\rho$ une représentation irréductible cuspidale auto-duale de niveau zero d'un certain $GL_n(k')$. Notons $G'=GL_n(k')$. Comme précédemment nous avons l'existence d'un $y \in BT_{0}(\mathbf{G}',k')$ et d'une classe de conjugaison semi-simple $s_{\rho} \in (\overline{\textsf{G}}'^{*}_{y})_{ss}$ telle que $e_{s_{\rho},\overline{\mathbb{Q}}_{\ell}}^{\overline{\textsf{G}}'_{y}} \rho^{G'^{+}_{y}}$. Notons qu'ici $\overline{\textsf{G}}'^{*}_{y} \simeq GL_n(\mathfrak{f}')$. Associons de même à $s_{\rho}$ son polynôme caractéristique $Q$ qui est un polynôme unitaire irréductible auto-dual de degré $n$.

\begin{The}[\cite{StevensLust} section 8]
\label{theCalculSpi}
Notons $\rho'\in \mathcal{A}_{n}^{\sigma}(k')$ l'unique (à équivalence près) twist non-ramifié (non-équivalent) de $\rho$ qui est auto-dual. Alors pour $\mathbf{G}$ un groupe classique non-ramifié, on a
\[\lfloor s_{\pi}(\rho)^{2} \rfloor + \lfloor s_{\pi}(\rho')^{2} \rfloor=a_{Q}^{(1)}+a_{Q}^{(2)}\]
sauf si $\mathbf{G}=Sp_{2n}$ et $Q(X)=X-1$ où dans ce cas on a
\[\lfloor s_{\pi}(\rho)^{2} \rfloor + \lfloor s_{\pi}(\rho')^{2} \rfloor=a_{(X-1)}^{(1)}+a_{(X-1)}^{(2)}-1.\]
\end{The}

Les résultats dans \cite{StevensLust} sont exprimés en termes de polynômes auto-duaux. Nous nous utilisons plutôt des classes de conjugaisons semi-simples. Nous souhaitons donc faire le lien entre les deux.

Soit $\textbf{\textsf{H}}$ un groupe du type $GL_{n}$, $Sp_{2n}$, $SO_{2n+1}$ ou $SO_{2n}$ sur $\mathfrak{F}$. On considère son plongement naturel $\textbf{\textsf{H}} \subseteq GL_{N}$ où $N$ est en entier naturel ($N=n$ pour $GL_{n}$, $N=2n+1$ pour $SO_{2n+1}$ et $N=2n$ pour $Sp_{2n}$ ou $SO_{2n}$). Soit $s \in \textbf{\textsf{H}}_{ss}$ une classe de conjugaison semi-simple. Celle-ci donne lieu à une classe de conjugaison semi-simple de $GL_{N}$ et l'on peut considérer $P$ son polynôme caractéristique.

\begin{Lem}
\label{lemPolyCaract}
Une classe de conjugaison géométrique semi-simple $s \in \textbf{\textsf{H}}_{ss}$ est caractérisée par son polynôme caractéristique $P$.
\end{Lem}

\begin{proof}
Traitons $\textsf{\textbf{H}}=Sp_{2n}$, les autres cas étant similaires. On a donc $N=2n$. Soit $\textsf{\textbf{T}}$ le tore déployé de $Sp_{2n}$, c'est à dire $\textsf{\textbf{T}}=Diag(a_{1},\cdots,a_{n},a_{n}^{-1},\cdots,a_{1}^{-1})$, $a_{i} \in \mathfrak{F}$. Nous pouvons supposer que $s \in \textsf{\textbf{T}}$ et donc écrire $s=(a_{1},\cdots,a_{2n})$ avec $a_{n+i} =a_{n-i+1}^{-1}$ pour $1\leq i \leq n$. Prenons $s'=(b_{1},\cdots,b_{2n}) \in \textsf{\textbf{T}}$ une autre classe de conjugaison semi-simple géométrique de $\textbf{\textsf{H}}$ ayant pour polynôme caractéristique $P$. Nous souhaitons donc montrer que $s$ et $s'$ sont conjuguées dans $\textbf{\textsf{H}}$. Comme ce sont deux éléments de $\textsf{\textbf{T}}$ cela est équivalent au fait qu'il existe $w \in W_{\textsf{\textbf{H}}}^{0}$, le groupe de Weyl de  $\textsf{\textbf{H}}$, tel que $w\cdot s'=s$. Rappelons que $W_{\textsf{\textbf{H}}}^{0} \simeq \mathcal{S}_{n} \ltimes (\mathbb{Z}/2\mathbb{Z})^{n}$. Comme $s$ et $s'$ ont même polynôme caractéristique, il existe une permutation $\sigma \in \mathcal{S}_{2n}$ telle que $b_{i} = a_{\sigma(i)}$. Comme $s \in \textsf{\textbf{T}}$, $\{a_{\sigma(1)},\cdots,a_{\sigma(2n)}\}=\{a_{1}^{\pm},\cdots,a_{n}^{\pm}\}$ (comptés avec multiplicités), donc il existe $i\in \{1,\cdots,n\}$ tel que $a_{\sigma(1)}=a_{i}^{\pm}$. Posons $\tau(1)=i$. Comme $s' \in \textsf{\textbf{T}}$, $a_{\sigma(2n)}=b_{2n}=b_{1}^{-1}=a_{\sigma(1)}^{-1}$, donc $\{a_{\sigma(2)},\cdots,a_{\sigma(2n-1)}\}=\{a_{i}^{\pm},1 \leq i \leq n, i\neq \tau(1)\}$. On construit donc par récurrence une permutation $\tau \in \mathcal{S}_{n}$ telle que $a_{\sigma(i)}=a_{\tau(i)}^{\pm}$, pour $1\leq i \leq n$. Dit autrement, on vient de fabriquer un élément $w \in W_{\textsf{\textbf{H}}}^{0} \simeq \mathcal{S}_{n} \ltimes (\mathbb{Z}/2\mathbb{Z})^{n}$ tel que $b_{i}=a_{\sigma(i)}=w \cdot a_{i} $, donc tel que $s' = w \cdot s$ ce qui achève la preuve.
\end{proof}

Soit $x \in BT_{0}$ et $s \in (\overline{\textsf{G}}_{x}^{*})_{ss}$. La classe de conjugaison $s$ correspond comme précédemment à $(s_{1},s_{2})$, $s_{i} \in (\overline{\textsf{G}}_{x,i}^{*})_{ss}$ qui ont pour polynômes caractéristiques $P_{s_1}$ et $P_{s_2}$. Dans la section \ref{secConjParahoriques} on a construit une application $\tilde{\psi}_{x} : ((\overline{\textsf{\textbf{G}}}^{*}_{x})_{ss})^{F} \rightarrow (\textsf{\textbf{G}}^{*}(\mathfrak{F})_{ss})^{F}$. Nommons $\tilde{s}:=\tilde{\psi}_{x}(s)$ l'image de $s$ par cette application.

\begin{Lem}
\label{lemPstilde}
Notons $P_{\tilde{s}}$ le polynôme caractéristique de $\tilde{s}$ alors on a :
\begin{enumerate}
\item Si $\mathbf{G}\neq Sp_{2n}$ : $P_{\tilde{s}}(X)=P_{s_1}(X)P_{s_2}(X)$
\item Si $\mathbf{G} = Sp_{2n}$ : $P_{\tilde{s}}(X)=P_{s_1}(X)P_{s_2}(X)/(X-1)$
\end{enumerate}
\end{Lem}

\begin{proof}
Traitons par exemple le cas où $\mathbf{G} = Sp_{2n}$ est un groupe symplectique. Dans ce cas $\overline{\textsf{G}}_{x_{i}}=Sp_{2n_{i}}(\mathfrak{f})$ (avec $n_{1}+n_{2}=n$). On a $\textbf{\textsf{G}}^{*}=SO_{2n+1}$ et $\overline{\textbf{\textsf{G}}}_{x_i}^{*}=SO_{2n_i+1}$. Notons $\textsf{\textbf{T}}^{*}$ le tore déployé de $\textbf{\textsf{G}}^{*}$ et $\textsf{\textbf{T}}_{i}^{*}$ celui de $\overline{\textbf{\textsf{G}}}_{x_i}^{*}$. On peut alors considérer que $\tilde{s} \in \textbf{\textsf{G}}^{*}$, $s_{i} \in \textsf{\textbf{T}}_{i}^{*}$ et donc écrire
\[s_i=(1,a_1^{(i)},\cdots,a_{n_i}^{(i)},(a_{n_i}^{(i)})^{-1}, \cdots, (a_1^{(i)})^{-1}).\]
On obtient alors que 
\[\tilde{s}=(1,a_1^{(1)},\cdots,a_{n_1}^{(1)},a_1^{(2)},\cdots,a_{n_2}^{(2)},(a_{n_2}^{(2)})^{-1}, \cdots, (a_1^{(2)})^{-1},(a_{n_1}^{(1)})^{-1}, \cdots, (a_1^{(1)})^{-1})\]
d'où le résultat. On fait de même avec les autres cas.
\end{proof}

\begin{The}
\label{theCompatibiliteLanglands}
Supposons que $\mathbf{G}$ est un groupe classique non-ramifié, $k$ de caractéristique nulle et $p \neq 2$. Soient $\pi \in \text{Irr}_{\overline{\mathbb{Q}}_{\ell}}(G)$ une représentation de niveau 0 et $\phi \in \Phi_{m}(I_k,\mathbf{G})$ tel que $\pi \in Rep_{\overline{\mathbb{Q}}_{\ell}}^{\phi}(G)$. Notons $\varphi_{\pi}$ le paramètre de Langlands associé à $\pi$ via la correspondance de Langlands locale pour les groupes classiques. Alors $\varphi_{\pi | I_{k}} \sim \phi$.
\end{The}

\begin{proof}
La section \ref{secCompaInductionParabolique} permet de nous ramener au cas où $\pi$ est cuspidale. Le théorème \ref{thmLLC} nous dit alors $\tilde{\varphi}_{\pi}=\bigoplus_{(\rho,m) \in Jord(\pi)} \varphi_{\rho} \otimes st_{m}$ donc $\tilde{\varphi}_{\pi | I_{k}} =\bigoplus_{(\rho,m) \in Jord(\pi)} m \varphi_{\rho | I_{k}} $. Comme $\sum_{m,(\rho,m) \in Jord(\pi)} m = \lfloor s_{\pi}(\rho)^{2} \rfloor$ (où $\lfloor \cdot \rfloor$ désigne la partie entière inférieure) on obtient $\tilde{\varphi}_{\pi | I_{k}} =\bigoplus_{\rho \in \mathcal{A}^{\sigma}_{[0]}(k)} \lfloor s_{\pi}(\rho)^{2} \rfloor \varphi_{\rho | I_{k}}$. Notons $[\rho]$ la classe d'inertie de $\rho$ de sorte que $[\rho] \cap \mathcal{A}^{\sigma}(k') = \{ \rho,\rho'\}$. Les deux paramètres de Langlands $\varphi_{\rho}$ et $\varphi_{\rho'}$ ont la même restriction à l'inertie donc $\tilde{\varphi}_{\pi | I_{k}} =\bigoplus_{[\rho]} (\lfloor s_{\pi}(\rho)^{2} \rfloor + \lfloor s_{\pi}(\rho')^{2} \rfloor)  \varphi_{\rho | I_{k}}$. Notons $t \in (\textsf{\textbf{G}}^{*}_{ss})^{F}$ la classe de conjugaison semi-simple associée à $\varphi_{\pi | I_{k}}$.

Reprenons les notations précédentes, $\pi$ nous fournit un $x \in BT_{0}$ et un $s \in (\overline{\textsf{G}}_{x}^{*})_{ss}$ correspondant à $(s_{1},s_{2})$, $s_{i} \in (\overline{\textsf{G}}_{x,i}^{*})_{ss}$. On définit également $\tilde{s} \in (\textbf{\textsf{G}}^{*}_{ss})^{F}$ par $\tilde{s}:=\tilde{\psi}_{x}(s)$. Par définition $\tilde{s}$ est la classe de conjugaison semi-simple $F$-stable associée à $\phi$. Il nous faut donc montrer que $t=\tilde{s}$. Le lemme \ref{lemPolyCaract} nous dit qu'il suffit de montrer que $P_{t}=P_{\tilde{s}}$ où $P_{t}$ et $P_{\tilde{s}}$ sont les polynômes caractéristiques de $t$ et $\tilde{s}$.

Si $\rho \in \mathcal{A}_{[0]}^{\sigma}(k)$, on a vu qu'on pouvait lui associer une classe de conjugaison semi-simple $s_{\rho}$ et l'on note $Q$ son polynôme caractéristique. La compatibilité à Langlands dans le cas $\mathbf{G}=GL_{n}$, démontré dans \cite{dat_equivalences_2014} section 3.2.6, montre que $Q$ est bien le polynôme caractéristique de la classe de conjugaison semi-simple $F$-stable associée à $\varphi_{\rho \mid I_{k}}$.

Le théorème \ref{theCalculSpi} et le lemme \ref{lemPstilde} permettent alors de conclure. Traitons par exemple la cas $\mathbf{G}=Sp_{2n}$. Nous avons $\tilde{\varphi}_{\pi | I_{k}} =\bigoplus_{[\rho]} (\lfloor s_{\pi}(\rho)^{2} \rfloor + \lfloor s_{\pi}(\rho')^{2} \rfloor)  \varphi_{\rho | I_{k}}$. Le théorème \ref{theCalculSpi} nous donne alors $\lfloor s_{\pi}(\rho)^{2} \rfloor + \lfloor s_{\pi}(\rho')^{2} \rfloor=a_{Q}^{(1)}+a_{Q}^{(2)}$ si $Q(X) \neq X-1$ et $\lfloor s_{\pi}(\rho)^{2} \rfloor + \lfloor s_{\pi}(\rho')^{2} \rfloor=a_{(X-1)}^{(1)}+a_{(X-1)}^{(2)}-1$ si $Q(X) = X-1$. Donc 
\begin{align*}
P_{t}(X)&=(X-1)^{(a_{(X-1)}^{(1)}+a_{(X-1)}^{(2)}-1)}\prod_{Q(X) \neq X-1} Q(X)^{(a_{Q}^{(1)}+a_{Q}^{(2)})}\\
&=1/(X-1)\left(\prod_{Q} Q^{a_{Q}^{(1)}}\right)\left(\prod_{Q} Q^{a_{Q}^{(2)}}\right)\\
&=1/(X-1)P_{s_1}(X)P_{s_2}(X)
\end{align*}
Le lemme \ref{lemPstilde} montre alors que $P_{t}=P_{\tilde{s}}$ qui est le résultat recherché.
\end{proof}

Le théorème de décomposition de Bernstein fournit une partition des irréductibles $Irr_{\overline{\mathbb{Q}}_{\ell}}(G)=\sqcup_{\mathfrak{s} \in \mathcal{B}(G)} Irr_{\mathfrak{s}}(G)$, où $\mathcal{B}(G)$ désigne l'ensemble des classes d'inertie de données cuspidales. En supposant vraie la correspondance de Langlands locale, on obtient une autre partition $Irr_{\overline{\mathbb{Q}}_{\ell}}(G)=\sqcup_{\varphi \in \Phi(\mathbf{G})} \Pi_{\varphi}$, où $\Pi_{\varphi}$ est le $L$-paquet associé au paramètre $\varphi$. Haines introduit dans \cite{haines} la notion de "centre de Bernstein stable" qui permet de comparer ces deux décompositions. 

\bigskip

Soit $\lambda \in \Phi(W_k,\mathbf{G})$ et $\mathfrak{i}$ sa classe d'inertie (on rappelle la définition de l'équivalence inertielle de Haines en \ref{defEqInert}). On définit un paquet inertiel par
\[\Pi_{\mathfrak{i}}^{+} := \bigsqcup_{\underset{\varphi_{|W_{k}} \in \mathfrak{i}}{\varphi \in \Phi(\mathbf{G})}} \Pi_{\varphi}(G).\]

Supposons que l'on ait la correspondance de Langlands locale pour $G$ ainsi que ses sous-groupes de Levi, la compatibilité à l'induction parabolique et à certains isomorphismes (voir \cite{haines} définition 5.2.1 pour plus de détails). Alors on peut définir une application $\mathcal{L}_{i}$ qui à une classe d'inertie de données cuspidales $\mathfrak{s}=[L,\sigma] \in \mathcal{B}(G)$ associe $\mathcal{L}_{i}(\mathfrak{s})$ la classe d'inertie du paramètre de Weil $\varphi_{\sigma|W_{k}}$. Les paquets inertiels permettent de comparer les décompositions $Irr_{\overline{\mathbb{Q}}_{\ell}}(G)=\sqcup_{\mathfrak{s} \in \mathcal{B}(G)} Irr_{\mathfrak{s}}(G)=\sqcup_{\varphi \in \Phi(\mathbf{G})} \Pi_{\varphi}$ :

\begin{The}[\cite{moussaouiABPS} théorème 2.9] Soient $G$ un groupe classique déployé et $\mathfrak{i}$ une classe d'inertie de paramètres de Weil. Alors
\[\Pi_{\mathfrak{i}}^{+}(G)=\bigsqcup_{\underset{\mathcal{L}_{i}(\mathfrak{s})=\mathfrak{i}}{\mathfrak{s}\in\mathcal{B}(G)}} Irr_{\mathfrak{s}}(G)\]

\end{The}

Revenons à l'étude de $Rep_{\overline{\mathbb{Q}}_{\ell}}^{\phi}(G)$.

\begin{The}
\label{theBlocStable}
Soient $\mathbf{G}$ un groupe classique non-ramifié, $k$ de caractéristique nulle, $p \neq 2$ et $\phi \in \Phi_{m}(I_k,\mathbf{G})$ un paramètre inertiel modéré. Alors si $C_{\widehat{\mathbf{G}}}(\phi)$, le centralisateur de $\phi(I_{k})$ dans $\widehat{\mathbf{G}}$, est connexe
\[ Irr_{\overline{\mathbb{Q}}_{\ell}}(G) \cap Rep_{\overline{\mathbb{Q}}_{\ell}}^{\phi}(G) = \Pi_{\mathfrak{i}}^{+}(G)\]
où $\mathfrak{i}$ est la classe d'inertie formée des paramètres de Weil qui étendent $\phi$.

Dit autrement, $Rep_{\overline{\mathbb{Q}}_{\ell}}^{\phi}(G)$ est un "bloc stable" (c'est-à-dire, correspond à un idempotent primitif du centre de Bernstein stable au sens de Haines \cite{haines}).
\end{The}

\begin{proof}
La proposition \ref{proClasseInertie} montre que si $C_{\widehat{\mathbf{G}}}(\phi)$ est connexe, alors l'ensemble des $\lambda \in \Phi(W_k,\mathbf{G})$ tels que $\lambda_{|I_{k}} \sim \phi$ forme une classe d'équivalence inertielle. De plus, le théorème \ref{theCompatibiliteLanglands} montre que la construction de $Rep_{\overline{\mathbb{Q}}_{\ell}}^{\phi}$ est compatible avec la correspondance de Langlands, d'où le résultat.
\end{proof}

Lorsque $C_{\widehat{\mathbf{G}}}(\phi)$ n'est pas connexe, $Rep_{\overline{\mathbb{Q}}_{\ell}}^{\phi}(G)$ est une somme de "blocs stables". Nous expliquerons dans un prochain article comment décomposer naturellement la catégorie $Rep_{\overline{\mathbb{Q}}_{\ell}}^{\phi}(G)$ en "blocs stables".

%%Appendice
\appendix

\section{Rappels sur le groupe dual}

\label{sectionRappelsGD}

Soient $\Omega$ un corps algébriquement clos et $\mathbf{G}$, $\mathbf{G}'$ deux groupes réductifs définis sur $\Omega$. Prenons $\varphi : \mathbf{G} \rightarrow \mathbf{G}'$ un isomorphisme. Choisissons $\textbf{T}$ un tore maximal de $\mathbf{G}$ et notons $\mathbf{T}' = \varphi(\mathbf{T})$ qui est un tore maximal de $\mathbf{G}'$. On associe à $\mathbf{G}$ et $\mathbf{T}$ une donnée radicielle $\phi(\mathbf{G},\mathbf{T})=(X,X^{\vee},\phi,\phi^{\vee})$ où $X=X^{*}(\mathbf{T})$ est le groupe des caractère de $\mathbf{T}$, $X^{\vee}=X_{*}(\mathbf{T})$ est le groupe des co-caractères, $\phi$ est l'ensemble des racines et $\phi^{\vee}$ l'ensemble des co-racines. À partir de $\phi(\mathbf{G},\mathbf{T})$ on forme la donnée radicielle duale $\widehat{\phi(\mathbf{G},\mathbf{T})}$ définie par $\widehat{\phi(\mathbf{G},\mathbf{T})}=(X^{\vee},X,\phi^{\vee},\phi)$. D'après \cite{springer} théorème 2.9 nous savons qu'il existe un groupe réductif $\widehat{\mathbf{G}}$ et un tore $\widehat{\mathbf{T}}$ tel que $\phi(\widehat{\mathbf{G}},\widehat{\mathbf{T}})=\widehat{\phi(\mathbf{G},\mathbf{T})}$. De façon analogue il existe $\widehat{\mathbf{G}}'$ et $\widehat{\mathbf{T}}'$ tel que $\phi(\widehat{\mathbf{G}}',\widehat{\mathbf{T}}')=\widehat{\phi(\mathbf{G}',\mathbf{T}')}$.

L'isomorphisme $\varphi : \mathbf{G} \rightarrow \mathbf{G}'$ induit un isomorphisme $f(\varphi):\phi(\mathbf{G}',\mathbf{T}') \rightarrow \phi(\mathbf{G},\mathbf{T})$. Nous avons alors aussi l'isogénie ${}^{t}f(\varphi):\widehat{\phi(\mathbf{G}',\mathbf{T}')} \rightarrow \widehat{\phi(\mathbf{G},\mathbf{T})}$, c'est à dire ${}^{t}f(\varphi):\phi(\widehat{\mathbf{G}}',\widehat{\mathbf{T}}') \rightarrow \phi(\widehat{\mathbf{G}},\widehat{\mathbf{T}})$. Le théorème 2.9 de \cite{springer} nous dit également qu'il existe un isomorphisme $\widehat{\varphi} :\widehat{\mathbf{G}}' \rightarrow \widehat{\mathbf{G}} $ qui envoie $\widehat{\mathbf{T}}'$ sur $\widehat{\mathbf{T}}$ et tel que $f(\widehat{\varphi})={}^{t}f(\varphi)$. Ce $\widehat{\varphi}$ n'est pas unique et deux tels $\widehat{\varphi}$ diffèrent par un automorphisme $Int(\hat{t}')$ où $\hat{t}' \in \widehat{\mathbf{T}}'$.

\bigskip

Nous savons que $\widehat{\mathbf{T}} \simeq X_{*}(\widehat{\mathbf{T}})\otimes_{\mathbb{Z}} \Omega^{\times} = X \otimes_{\mathbb{Z}} \Omega^{\times}$ et de même $\widehat{\mathbf{T}}' \simeq  X' \otimes_{\mathbb{Z}} \Omega^{\times}$. Par définition de $\widehat{\varphi}$, le morphisme $\widehat{\varphi}:\widehat{\mathbf{T}}' \rightarrow \widehat{\mathbf{T}}$ est donné par $f(\varphi) \otimes id : X' \otimes_{\mathbb{Z}} \Omega^{\times} \rightarrow X \otimes_{\mathbb{Z}} \Omega^{\times}$. Nous avons donc le diagramme commutatif suivant :

\[ \xymatrix{
X' \otimes_{\mathbb{Z}} \Omega^{\times} \ar@{->}[r] \ar@{->}[d]^{f(\varphi) \otimes id} & \widehat{\mathbf{G}}' \ar@{->}[d]^{\widehat{\varphi}}\\
X \otimes_{\mathbb{Z}} \Omega^{\times} \ar@{->}[r] & \widehat{\mathbf{G}}}
\]

$\widehat{\varphi}$ est défini à conjugaison intérieure près, ainsi si l'on passe aux classes de conjugaison celui ci est bien défini. De plus on sait que $\widehat{\mathbf{G}}_{ss} \simeq (\widehat{\mathbf{T}}/W)$ où $W$ est le groupe de Weyl de $\widehat{\mathbf{G}}$ relativement à $\widehat{\mathbf{T}}$. Ainsi on obtient

\begin{Lem}
\label{lemactiongroupedual}
Avec les notations précédentes on a un diagramme commutatif

\[ \xymatrix{
(X' \otimes_{\mathbb{Z}} \Omega^{\times})/W' \ar@{->}[r]^-{\sim} \ar@{->}[d]^{f(\varphi) \otimes id} & \widehat{\mathbf{G}}'_{ss} \ar@{->}[d]^{\widehat{\varphi}_{ss}}\\
(X \otimes_{\mathbb{Z}} \Omega^{\times})/W \ar@{->}[r]^-{\sim} & \widehat{\mathbf{G}}_{ss}}
\]
\end{Lem}

\bigskip

Si maintenant on prend $\mathbf{G}'=\mathbf{G}$ et $\varphi \in Aut(\mathbf{G})$. Nous pouvons identifier de manière canonique $\widehat{\mathbf{G}}_{ss}$ et $\widehat{\mathbf{G}}'_{ss}$ de la façon suivante. Il existe un $g \in \mathbf{G}$ tel que $\mathbf{T}'=Ad(g)(\mathbf{T})$. L'automorphisme $Ad(g)$ induit donc un isomorphisme $f$ de $X'$ dans $X$ et un isomorphisme canonique
\[ f \otimes id : (X' \otimes_{\mathbb{Z}} \Omega^{\times})/W' \longrightarrow (X \otimes_{\mathbb{Z}} \Omega^{\times})/W\]
(comme on a quotienté par le groupe de Weyl, $f \otimes id$ ne dépend pas du choix de $g$).

Le lemme \ref{lemactiongroupedual} nous fournit un isomorphisme canonique $\widehat{f}_{ss}$ entre $\widehat{\mathbf{G}}_{ss}$ et $\widehat{\mathbf{G}}'_{ss}$. Via cette identification, l'automorphisme $\varphi$ donne lieu à un isomorphisme $\widehat{\varphi}_{ss} \in Aut(\widehat{\mathbf{G}}_{ss})$. Ce dernier est défini par le diagramme commutatif suivant

\[ \xymatrix{
(X \otimes_{\mathbb{Z}} \Omega^{\times})/W \ar@{->}[rr]^-{\sim} \ar@{->}[d]^{f(\varphi) \otimes id} & &\widehat{\mathbf{G}}_{ss} \ar@{->}[d]^{\widehat{\varphi}_{ss}}\\
(X' \otimes_{\mathbb{Z}} \Omega^{\times})/W' \ar@{->}[r]^{f \otimes id} &  (X \otimes_{\mathbb{Z}} \Omega^{\times})/W \ar@{->}[r]^-{\sim}& \widehat{\mathbf{G}}_{ss}}
\]

Il correspond donc à l'automorphisme de $X$ : $f \circ f(\varphi)$. Une autre façon de voir les choses est la suivante : 

Fixons un épinglage $(\mathbf{G},\mathbf{B},\mathbf{T},\{u_{\alpha}\}_{\alpha \in \Delta})$. On a alors une suite exacte scindée
\[ \{1\} \longrightarrow Int(\mathbf{G}) \longrightarrow Aut(\mathbf{G}) \longrightarrow Out(\mathbf{G}) \longrightarrow \{1\}\]
L'automorphisme $f \circ f(\varphi)$ correspond à l'image de $\varphi$ par l'application $Aut(\mathbf{G}) \longrightarrow Out(\mathbf{G})$. Son image par $Out(\mathbf{G}) \rightarrow Out(\widehat{\mathbf{G}})$ est ${}^{t}(f \circ f(\varphi))$. On peut donc prendre pour $\widehat{\varphi}$ l'image de ${}^{t}(f \circ f(\varphi))$ par $Out(\widehat{\mathbf{G}}) \rightarrow Aut(\widehat{\mathbf{G}})$. L'automorphisme $\widehat{\varphi}_{ss}$ recherché est alors l'application induite par $\widehat{\varphi}$ sur les classes de conjugaison semi-simples.

\begin{Lem}
\label{lemconjugaisondual}
On a un diagramme commutatif 
\[ \xymatrix{
(X \otimes_{\mathbb{Z}} \Omega^{\times})/W \ar@{->}[r]^-{\sim} \ar@{->}[d]^{f(\varphi) \otimes id} & \widehat{\mathbf{G}}_{ss} \ar@{->}[d]^{\widehat{\varphi}_{ss}}\\
(X' \otimes_{\mathbb{Z}} \Omega^{\times})/W \ar@{->}[r]^-{\sim} & \widehat{\mathbf{G}}_{ss}}
\]
où $\widehat{\varphi}$ est l'image de $\varphi$ par l'application $Aut(\mathbf{G}) \rightarrow Out(\mathbf{G}) \rightarrow Out(\widehat{\mathbf{G}}) \rightarrow Aut(\widehat{\mathbf{G}})$. En particulier, si $\varphi \in Int(\mathbf{G})$, $\widehat{\varphi}_{ss}=id$.
\end{Lem}

\section{Rappels sur le Frobenius}

\label{secFrobenius}

Soit $\textsf{\textbf{G}}$ un groupe réductif connexe défini sur $\mathfrak{F}$. Une structure sur $\mathfrak{f}$ donne lieu à un endomorphisme de Frobenius noté $F:\textsf{\textbf{G}} \rightarrow \textsf{\textbf{G}}$, tel que les $\mathfrak{f}$-points de $\textsf{\textbf{G}}$ soient $\textsf{G}:=\textsf{\textbf{G}}^{F}=\textsf{\textbf{G}}(\mathfrak{f})$. Celui-ci est défini de la manière suivante :

$\textsf{\textbf{G}}$ possède une $\mathfrak{f}$-structure si et seulement si son algèbre des fonctions, que l'on note $A$, vérifie $A=A_{0} \otimes_{\mathfrak{f}} \mathfrak{F}$ où $A_{0}$ est une $\mathfrak{f}$-algèbre. L'endomorphisme de Frobenius $F:\textsf{\textbf{G}} \rightarrow \textsf{\textbf{G}}$ est alors défini par son co-morphisme $F^{*} \in End(A_{0} \otimes_{\mathfrak{f}} \mathfrak{F})$ qui a $x \otimes \lambda$ associe $x^{q} \otimes \lambda$.

Le groupe de Galois $Gal(\mathfrak{F} / \mathfrak{f})$ agit également sur $A_{0} \otimes_{\mathfrak{f}} \mathfrak{F}$ par $x\otimes \lambda \mapsto x \otimes \lambda^{q}$ et donc induit une action que l'on note $\tau$ sur $\textsf{\textbf{G}}$.

\bigskip

Nous avons besoin de comprendre un peu plus en détail comment ces deux actions agissent lorsque $\textsf{\textbf{G}}=\textsf{\textbf{T}}$ est un tore défini sur $\mathfrak{f}$. Notons $X=X^{*}(\textsf{\textbf{T}})=Hom(\textsf{\textbf{T}},\mathbb{G}_{m})$ le groupe des caractères de $\textsf{\textbf{T}}$. On a $X\simeq Hom_{alg-Hopf}(\mathfrak{F}[t,t^{-1}],A)$ et donc $\tau$ et $F$ se prolongent en des actions sur $X$. Notons $\tau_{X}$ l'action de $\tau$ induit sur $X$.

Soit $\alpha \in X$ et calculons $\tau_{X}\cdot F \cdot \alpha$.
$$\begin{array}{ccccccccc}
\tau_{X}\cdot F \cdot \alpha & : & \mathfrak{F}[t,t^{-1}] & \overset{\alpha}{\longrightarrow} & A_{0} \otimes_{\mathfrak{f}} \mathfrak{F}& \overset{F}{\longrightarrow} & A_{0} \otimes_{\mathfrak{f}} \mathfrak{F}& \overset{\tau}{\longrightarrow} & A_{0} \otimes_{\mathfrak{f}} \mathfrak{F}\\
 & &  &  & x\otimes \lambda & \mapsto & x^{q} \otimes \lambda  & \mapsto & x^{q} \otimes \lambda^{q}\\
\end{array}$$
On voit donc que $\tau_{X} \cdot F \cdot \alpha = \alpha^{q}$. Notons $\psi$ l'élévation à la puissance $q$. Comme $\textsf{\textbf{T}}\simeq Hom(X,\mathfrak{F}^{\times})$, ces actions se transfèrent en des actions sur $\textsf{\textbf{T}}$. A quoi correspond $\psi$? Prenons $u:X \to \mathfrak{F}^{\times}$ alors 
$$\begin{array}{ccccccc}
\psi \cdot u & : & X & \overset{\psi}{\longrightarrow} & X & \overset{u}{\longrightarrow} & \mathfrak{F}^{\times}\\
 & & \alpha & \mapsto & \alpha^{q} & \mapsto & u(\alpha^{q})=u(\alpha)^{q}\\
\end{array}$$

Donc $\psi$ agit comme l'élévation à la puissance $q$ sur $\textsf{\textbf{T}}$. Finalement on trouve que l'action de $F$ sur $\textsf{\textbf{T}}$ est donnée par $F=\tau_{X}^{-1} \circ \psi$.

\section{Équivalence inertielle pour les paramètres de Langlands}

\label{secEqInert}

Nous rappelons dans cette section la définition de classe inertielle introduite par Haines dans \cite{haines}. Nous montrons également une proposition, utile pour le théorème \ref{theBlocStable}, qui relie une classe inertielle aux paramètres de l'inertie.

Soit $\mathbf{G}$ un groupe réductif connexe défini sur $k$ un corps $p$-adique. On considère ici des représentations à coefficients complexes.

\bigskip

Commençons par rappeler la notion de parabolique standard et de Levi standard de ${}^{L}\mathbf{G}$, définis dans \cite{borel} paragraphes 3.3 et 3.4. Fixons des données $\widehat{\mathbf{T}}_{0}\subseteq \widehat{\mathbf{B}}_{0} \subseteq \widehat{\mathbf{G}}$, composées d'un tore maximal et d'un Borel, stables sous l'action du groupe de Galois. On dit alors qu'un sous-groupe parabolique $\mathcal{P}$ de ${}^{L}\mathbf{G}$ est standard si $\mathcal{P} \supseteq {}^{L}\mathbf{B}_{0}$. Sa composante neutre $\mathcal{P}^{\circ}:=\mathcal{P} \cap \widehat{\mathbf{G}}$ est alors un sous-groupe parabolique standard de $\widehat{\mathbf{G}}$ contenant $\widehat{\mathbf{B}}_{0}$ et on a $\mathcal{P}=\mathcal{P}^{\circ} \rtimes W_k$. Soit $\mathcal{M}^{\circ}$ l'unique Levi de $\mathcal{P}^{\circ}$ contenant $\widehat{\mathbf{T}}_{0}$. Alors $\mathcal{M}:=N_{\mathcal{P}}(\mathcal{M}^{\circ})$ est un sous-groupe de Levi de $\mathcal{P}$ et $\mathcal{M}=\mathcal{M}^{\circ} \rtimes W_k$. Les sous-groupes de Levi de ${}^{L}\mathbf{G}$ construits de cette manière sont appelés standards. Tout sous-groupe de Levi de ${}^{L}\mathbf{G}$ est $\widehat{\mathbf{G}}$-conjugué à un Levi standard, et pour $\mathcal{M}$ un Levi standard de ${}^{L}\mathbf{G}$, on note $\{\mathcal{M}\}$ l'ensemble des sous-groupes de Levi standards qui sont $\widehat{\mathbf{G}}$-conjugués à $\mathcal{M}$.

Soit $\lambda : W_{k} \rightarrow {}^{L}\mathbf{G}$ un morphisme admissible. L'image de $\lambda$ est alors contenue dans un Levi minimal de ${}^{L}\mathbf{G}$, bien défini à conjugaison par un élément de $C_{\widehat{\mathbf{G}}}(\lambda)^{\circ}$, où $C_{\widehat{\mathbf{G}}}(\lambda)$ désigne le centralisateur de $\lambda(W_{k})$ dans $\widehat{\mathbf{G}}$ (voir \cite{borel} proposition 3.6). Notons $(\lambda)_{\widehat{\mathbf{G}}}$ la classe de $\widehat{\mathbf{G}}$-conjugaison de $\lambda$. Alors $\lambda$ donne lieu à une unique classe de Levi standards $\{\mathcal{M}_{\lambda}\}$ telle qu'il existe $\lambda^{+} \in (\lambda)_{\widehat{\mathbf{G}}}$ dont l'image est contenue minimalement dans $\mathcal{M}_{\lambda}$, pour un $\mathcal{M}_{\lambda}$ dans cette classe.

\begin{Def}[\cite{haines} définition 5.3.3]
\label{defEqInert}
Soient $\lambda_{1},\lambda_{2} : W_{k} \rightarrow {}^{L}\mathbf{G}$ deux paramètres admissibles. On dit que $\lambda_{1}$ et $\lambda_{2}$ sont \textit{inertiellement équivalents} si
\begin{enumerate}
\item $\{\mathcal{M}_{\lambda_{1}}\}=\{\mathcal{M}_{\lambda_{2}}\}$
\item il existe $\mathcal{M} \in \{\mathcal{M}_{\lambda_{1}}\}$, $\lambda_{1}^{+} \in (\lambda_{1})_{\widehat{\mathbf{G}}}$ et $\lambda_{2}^{+} \in (\lambda_{2})_{\widehat{\mathbf{G}}}$ dont les images sont minimalement contenues dans $\mathcal{M}$, et $z \in H^{1}(\langle \widehat{\vartheta} \rangle, (Z(\mathcal{M}^{\circ})^{I_k})^{\circ})$ vérifiant
\[ (z\lambda_{1}^{+})_{\mathcal{M}^{\circ}}=(\lambda_{2}^{+})_{\mathcal{M}^{\circ}}\]
\end{enumerate}
\end{Def}

\begin{Pro}
\label{proClasseInertie}
Soit $\phi \in \Phi(I_k,\mathbf{G})$ tel que $C_{\widehat{\mathbf{G}}}(\phi)$, le centralisateur de $\phi(I_{k})$ dans $\widehat{\mathbf{G}}$, est connexe. Alors l'ensemble des $\lambda \in \Phi(W_k,\mathbf{G})$ tels que $\lambda_{|I_{k}} \sim \phi$ forme une classe d'équivalence inertielle.
\end{Pro}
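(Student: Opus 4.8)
The plan is to verify directly the two conditions of Definition \ref{defEqInert}, for which I may fix a representative $\phi : I_k \rightarrow {}^{L}\mathbf{G}$ and restrict attention to parameters $\lambda \in \Phi(W_k,\mathbf{G})$ extending exactly this $\phi$ (every $\lambda$ with $\lambda_{|I_k} \sim \phi$ is $\widehat{\mathbf{G}}$-conjugate to such a one, and the set under consideration is stable under $\widehat{\mathbf{G}}$-conjugacy). Write $C := C_{\widehat{\mathbf{G}}}(\phi)$, connected by hypothesis. The first point is the elementary observation, already used in the discussion preceding Proposition \ref{proparaminertie}, that any two extensions $\lambda,\lambda'$ of $\phi$ satisfy $\lambda'(\mathrm{Frob}) = c\,\lambda(\mathrm{Frob})$ with $c \in C$: the cocycle identity $\lambda'(\mathrm{Frob})\phi(i)\lambda'(\mathrm{Frob})^{-1} = \phi(\mathrm{Frob}\,i\,\mathrm{Frob}^{-1})$ forces $c$ to commute with $\phi(I_k)$. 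Hence the semisimple automorphism $\theta_\lambda := \mathrm{Ad}(\lambda(\mathrm{Frob}))$ of $C$ has an image in $\mathrm{Out}(C)$ independent of the extension, and its restriction to the torus $Z(C)$ is independent of $\lambda$; therefore the Levi $\mathcal{M}_{\varphi}$ of Lemma \ref{lemFactoVarphi} depends only on $\phi$. Call it $\mathcal{M}_\phi$; by that lemma every such $\lambda$ factors through $\mathcal{M}_\phi$.

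Next I would prove condition (1): all such $\lambda$ share the same class $\{\mathcal{M}_\lambda\}$ of minimal Levis. By the Borel description of the minimal Levi together with the computation $C_{\widehat{\mathbf{G}}}(\lambda) = C^{\theta_\lambda}$, the Levi $\mathcal{M}_\lambda$ is obtained from a maximal torus of $(C^{\theta_\lambda})^{\circ}$ — equivalently, from the connected centre of an elliptic maximal torus of $C_{\widehat{\mathbf{G}}}(\phi)^{\circ}\varphi(W_k)$ supplied by Lemma \ref{lemToreElliptic}, which is exactly the mechanism used in the proof of Proposition \ref{proEqRelevance}. Since the $\theta_\lambda$ all lie in a single coset $\mathrm{Inn}(C)\,\theta_0$ and $C$ is connected, a maximal torus of $(C^{\theta_\lambda})^{\circ}$ is $C$-conjugate (hence $\widehat{\mathbf{G}}$-conjugate) to a fixed torus built from a $\theta_0$-stable pair in $C$; thus $\{\mathcal{M}_\lambda\}$ does not depend on $\lambda$, and condition (1) holds.

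For condition (2), I would restrict two such parameters $\lambda_1,\lambda_2$ to a common minimal Levi $\mathcal{M}$, choosing (after $\widehat{\mathbf{G}}$-conjugation) representatives $\lambda_1^{+},\lambda_2^{+}$ factoring minimally through $\mathcal{M}$ with the same restriction $\phi'$ to $I_k$. Then $\lambda_1^{+}$ and $\lambda_2^{+}$ agree on $I_k$ and differ only at Frobenius by an element of $C_{\mathcal{M}^{\circ}}(\phi')$; discreteness of both parameters in $\mathcal{M}$ forces this element to lie in $Z(\mathcal{M}^{\circ})$, and connectedness of $C$ — equivalently of $C_{\widehat{\mathbf{G}}}(\phi')$ — shows that the twist connecting $\lambda_1^{+}$ to $\lambda_2^{+}$ may be taken in $(Z(\mathcal{M}^{\circ})^{I_k})^{\circ}$, giving the required class in $H^{1}(\langle\widehat{\vartheta}\rangle,(Z(\mathcal{M}^{\circ})^{I_k})^{\circ})$. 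Conversely, any $\lambda$ inertially equivalent to $\lambda_1$ arises from it by such an unramified twist on a common Levi, which is trivial on $I_k$, whence $\lambda_{|I_k} \sim \lambda_{1|I_k} = \phi$; this identifies the inertial class with the set in the statement.

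The main obstacle is condition (1): controlling the cuspidal-support Levi $\{\mathcal{M}_\lambda\}$ uniformly over all extensions of $\phi$, i.e.\ showing that the ellipticization of $C_{\widehat{\mathbf{G}}}(\phi)^{\circ}\varphi(W_k)$ inside ${}^{L}\mathbf{G}$ does not jump. This is precisely where connectedness of $C_{\widehat{\mathbf{G}}}(\phi)$ is indispensable: when it fails, the component group intervenes, distinct extensions of $\phi$ may land in non-conjugate minimal Levis, and $\{\lambda : \lambda_{|I_k} \sim \phi\}$ splits into several inertial classes — this is exactly the mechanism behind the decomposition of $Rep_{\overline{\mathbb{Q}}_{\ell}}^{\phi}(G)$ into several stable blocks noted after Theorem \ref{theBlocStable}.
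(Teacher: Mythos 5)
Your overall strategy is the same as the paper's: fix a representative $\phi$, write any two extensions as $\lambda_2=\eta\lambda_1$ with $\eta$ a cocycle valued in $C:=C_{\widehat{\mathbf{G}}}(\phi)$, and use connectedness of $C$ first to identify the minimal Levis and then to produce the unramified twist. Your condition (1) is correct in substance, but the pivotal assertion — that the maximal tori of $(C^{\theta_\lambda})^{\circ}$ for the various extensions are all conjugate — is exactly the point where the argument has content: one must move $\theta_{\lambda_1}$- and $\theta_{\lambda_2}$-stable Borel pairs of $C$ to a common pair $(\widehat{\mathbf{T}},\widehat{\mathbf{B}})$ and observe that $\eta(\mathrm{Frob})$ then normalizes this pair inside $C$, hence lies in $\widehat{\mathbf{T}}$ \emph{because $C$ is connected}; consequently $\theta_{\lambda_1}$ and $\theta_{\lambda_2}$ restrict to the same automorphism of $\widehat{\mathbf{T}}$ and $\mathcal{M}_{\lambda_1}=\mathcal{M}_{\lambda_2}$. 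You assert the conclusion without this normalizer argument.

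The genuine gap is in condition (2). You claim that the element $c=\eta(\mathrm{Frob})$ relating the two Frobenius values lies in $Z(\mathcal{M}^{\circ})$ ``by discreteness''. This does not follow: discreteness constrains $C_{\mathcal{M}^{\circ}}(\lambda_i^{+})$, not the individual element $c$, which a priori only lies in $C_{\mathcal{M}^{\circ}}(\phi')$ and, after the normalization above, in the maximal torus $\widehat{\mathbf{T}}$ of $C$ — in general strictly larger than $Z(\mathcal{M}^{\circ})$, whose intersection with $\widehat{\mathbf{T}}$ is essentially only $(\widehat{\mathbf{T}}^{\theta_{\lambda_1}})^{\circ}$. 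What must actually be shown is that one can replace $\lambda_2$ by ${}^{t}\lambda_2$ for some $t\in\widehat{\mathbf{T}}$ so that $t^{-1}\eta(\mathrm{Frob})\theta_{\lambda_1}(t)$ lands in $(\widehat{\mathbf{T}}^{\theta_{\lambda_1}})^{\circ}\subseteq (Z(\mathcal{M}^{\circ})^{I_k})^{\circ}$. This is the Lang-type decomposition $\widehat{\mathbf{T}}=L_{\theta}(\widehat{\mathbf{T}})\cdot(\widehat{\mathbf{T}}^{\theta})^{\circ}$ of Lemma \ref{lemDecomposTores}, which in turn requires verifying that $\theta_{\lambda_1}$ acts with finite order on $X_{*}(\widehat{\mathbf{T}})$ — a non-trivial check the paper carries out via the root lattice and the action on $Z(C)$, and which is entirely absent from your argument. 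Without this step the existence of the class $z\in H^{1}(\langle\widehat{\vartheta}\rangle,(Z(\mathcal{M}^{\circ})^{I_k})^{\circ})$ required by Definition \ref{defEqInert} is not established. The converse inclusion (inertially equivalent parameters have conjugate restrictions to inertia) you handle correctly.
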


\begin{proof}
Remarquons tout d'abord que deux paramètres admissibles de $W_{k}$ inertiellement équivalents ont des restrictions à l'inertie conjuguées. Prenons donc deux paramètres admissibles $\lambda_{1},\lambda_{2} : W_{k} \rightarrow {}^{L}\mathbf{G}$ tels que $\lambda_{1|I_{k}} \sim \lambda_{2|I_{k}} \sim \phi$ et montrons qu'ils sont inertiellement équivalents. Quitte à conjuguer par $\widehat{\mathbf{G}}$, on peut supposer que $\lambda_{1|I_{k}} = \lambda_{2|I_{k}} = \phi$. Fixons ($\widehat{\mathbf{T}},\widehat{\mathbf{B}})$ une paire constituée d'un tore maximal et d'un Borel de $C_{\widehat{\mathbf{G}}}(\phi)$. 

Comme $\lambda_{1|I_{k}} = \phi$, pour $w \in W_{k}$, $\lambda_{1}(w)$ normalise $\phi(I_{k})$ donc normalise également $C_{\widehat{\mathbf{G}}}(\phi)$. En faisant agir $w$ par conjugaison par $\lambda_{1}(w)$ on obtient une action $Ad_{\lambda_{1}}:W_{k}/I_{k} \rightarrow Aut(C_{\widehat{\mathbf{G}}}(\phi))$. La conjugaison par $\lambda_{1}(\text{Frob})$, $Ad_{\lambda_{1}}(\text{Frob})$ est donc un automorphisme semi-simple de $C_{\widehat{\mathbf{G}}}(\phi)$, que l'on notera $\theta_{\lambda_1}$. Par le théorème 7.5 de \cite{steinberg} il existe une paire $(\mathbf{T}_{1},\mathbf{B}_{1})$ constituée d'un tore maximal et d'un Borel de $C_{\widehat{\mathbf{G}}}(\phi)$ tous les deux $\theta_{\lambda_1}$-stables. Quitte à conjuguer $\lambda_{1}$ par un élément de $C_{\widehat{\mathbf{G}}}(\phi)$, on peut supposer que $(\mathbf{T}_{1},\mathbf{B}_{1})=(\widehat{\mathbf{T}},\widehat{\mathbf{B}})$, c'est à dire que $\widehat{\mathbf{T}}$ et $\widehat{\mathbf{B}}$ sont $\theta_{\lambda_1}$-stables. On fait de même pour $\lambda_{2}$ et donc $\widehat{\mathbf{T}}$ et $\widehat{\mathbf{B}}$ sont également $\theta_{\lambda_2}$-stables.

Posons $\mathcal{M}_{\lambda_1}:=C_{{}^{L}\mathbf{G}}([\widehat{\mathbf{T}}^{\theta_{\lambda_1}}]^{\circ})$. Comme $(\widehat{\mathbf{T}}^{\theta_{\lambda_1}})^{\circ}$ est un tore maximal de $C_{\widehat{\mathbf{G}}}(\lambda_{1})^{\circ}$ (\cite{DigneMichelgroupe} théorème 1.8 (iii)), $\mathcal{M}_{\lambda_{1}}$ est un Levi minimal contenant l'image de $\lambda_{1}$ (\cite{borel} proposition 3.6). Adoptons les même notations pour $\lambda_2$.

Écrivons pour $w \in W_{k}$, $\lambda_{2}(w)=\eta(w)\lambda_{1}(w)$ avec $\eta(w) \in \widehat{\mathbf{G}}$. Alors $\eta$ est un cocycle à valeurs dans $C_{\widehat{\mathbf{G}}}(\phi)$ pour l'action $Ad_{\lambda_{1}}$, c'est-à-dire $\eta \in Z^{1}_{Ad_{\lambda_{1}}}(W_{k}/I_{k}, C_{\widehat{\mathbf{G}}}(\phi))$. Comme la paire $(\widehat{\mathbf{T}},\widehat{\mathbf{B}})$ est à la fois $\theta_{\lambda_1}$-stable et $\theta_{\lambda_2}$-stable, $\eta(\text{Frob})$ est dans le normalisateur dans $C_{\widehat{\mathbf{G}}}(\phi)$ de la paire $(\widehat{\mathbf{T}},\widehat{\mathbf{B}})$ qui est égal à $\widehat{\mathbf{T}}$. Puisque $\eta(\text{Frob}) \in \widehat{\mathbf{T}}$, on a en particulier que $\widehat{\mathbf{T}}^{\theta_{\lambda_1}} = \widehat{\mathbf{T}}^{\theta_{\lambda_2}}$ et donc $\mathcal{M}_{\lambda_{1}}=\mathcal{M}_{\lambda_{2}}$. Notons maintenant $\mathcal{M}:=\mathcal{M}_{\lambda_{1}}=\mathcal{M}_{\lambda_{2}}$.

La suite exacte $1 \rightarrow \mathcal{M}^{\circ} \rightarrow \mathcal{M} \rightarrow \langle \widehat{\vartheta} \rangle \rightarrow 1$ et le choix d'une section permettent de définir une action de $\langle \widehat{\vartheta} \rangle$ sur $\mathcal{M}^{\circ}$. Par conjugaison, elle induit une action de $\langle \widehat{\vartheta} \rangle$ sur $Z(\mathcal{M}^{\circ})$ qui est alors indépendante du choix de la section. On a donc une action canonique de $\langle \widehat{\vartheta} \rangle$ sur $Z(\mathcal{M}^{\circ})$.

On aurait pu conjuguer $\phi$ dès le départ pour avoir $\mathcal{M}$ standard. Il nous suffit donc pour montrer que $\lambda_1$ et $\lambda_2$ sont inertiellement équivalents de montrer l'existence d'un $z \in H^{1}(\langle \widehat{\vartheta} \rangle, (Z(\mathcal{M}^{\circ})^{I_k})^{\circ})$ tel que $(z\lambda_{1})_{\mathcal{M}^{\circ}}=(\lambda_{2})_{\mathcal{M}^{\circ}}$. Soit $t \in \widehat{\mathbf{T}}$, alors ${}^{t}\lambda_{2}=\eta'\lambda_{1}$ avec $\eta'=t^{-1}\eta Ad_{\lambda_1}(t)$. Ainsi, pour finir la preuve de la proposition, il nous suffit de montrer qu'il existe $t \in \widehat{\mathbf{T}}$ tel que $t^{-1}\eta(\text{Frob}) \theta_{\lambda_1}(t) \in (Z(\mathcal{M}^{\circ})^{I_k})^{\circ}$. Or comme $(\widehat{\mathbf{T}}^{\theta_{\lambda_1}})^{\circ} \subseteq (Z(\mathcal{M}^{\circ})^{I_k})^{\circ}$ cela découle du lemme \ref{lemDecomposTores} ci-dessous. Pour pouvoir appliquer ce dernier, il nous reste à vérifier que $\theta_{\lambda_1}$ induit un automorphisme d'ordre fini sur $X_{*}(\widehat{\mathbf{T}})$. Par dualité, il suffit de vérifier que $\theta_{\lambda_1}$ a une action d'ordre fini sur le groupe des caractère $X^{*}(\widehat{\mathbf{T}})$. Pour cela nous allons montrer la finitude de l'action de $\theta_{\lambda_1}$ sur $Q$, le réseau engendré par les racines, puis sur le centre $Z(C_{\widehat{\mathbf{G}}}(\phi))$. Comme le groupe des caractères de $Z(C_{\widehat{\mathbf{G}}}(\phi))$ est $X^{*}(\widehat{\mathbf{T}})/Q$ (\cite{springer} 2.15), on en déduit la finitude de l'action sur $X^{*}(\widehat{\mathbf{T}})$.

L'automorphisme $\theta_{\lambda_1}$ de $C_{\widehat{\mathbf{G}}}(\phi)$ stabilise $\widehat{\mathbf{T}}$, donc agit sur l'ensemble des racines de $C_{\widehat{\mathbf{G}}}(\phi)$ par rapport à $\widehat{\mathbf{T}}$. Cet ensemble étant fini, cette action est également d'ordre fini. Il nous reste à étudier l'action de $\theta_{\lambda_1}$ sur $Z(C_{\widehat{\mathbf{G}}}(\phi))$ le centre de $C_{\widehat{\mathbf{G}}}(\phi)$. D'après \cite{datFunctoriality} lemme 2.1.1, il existe $n \in \mathbb{N}^{*}$ et $\lambda : W_{k} \rightarrow {}^{L}\mathbf{G}$ une extension de $\phi$ telle que $\lambda(n\text{Frob})=(1,n\widehat{\vartheta})$. En particulier $\theta_{\lambda}$ a une action d'ordre fini sur $Z(C_{\widehat{\mathbf{G}}}(\phi))$. Or nous avons vu que deux paramètres de $W_{k}$ qui étendent $\phi$ diffèrent par un cocycle à valeur dans $C_{\widehat{\mathbf{G}}}(\phi)$, donc l'action sur le centre $Z(C_{\widehat{\mathbf{G}}}(\phi))$ est indépendante du choix de l'extension de $\phi$. Par conséquent $\theta_{\lambda_1}$ a une action d'ordre fini sur le centre et on a le résultat.

\end{proof}

\begin{Lem}
\label{lemDecomposTores}
Soient $\mathbf{T}$ un tore sur $\mathbb{C}$ et $\theta \in Aut(\mathbf{T})$. On note $X_{*}:=X_{*}(\mathbf{T})$ l'ensemble des co-caractères et on suppose que $\theta$ induit un automorphisme d'ordre fini sur $X_{*}$. Posons $L_{\theta}$ l'application $L_{\theta}:\mathbf{T} \rightarrow \mathbf{T}$, $t\mapsto t^{-1}\theta(t)$. Alors $\mathbf{T}=L_{\theta}(\mathbf{T}) \cdot (\mathbf{T}^{\theta})^{\circ}$.
\end{Lem}

\begin{proof}
Le groupe des co-caractères de $(\mathbf{T}^{\theta})^{\circ}$ vaut $X_{*}((\mathbf{T}^{\theta})^{\circ})=X_{*}^{\theta}$.
On définit $L_{\theta}^{X}:X_{*} \rightarrow X_{*}$ par $L_{\theta}^{X}(\lambda)=\lambda-\theta(\lambda)$. Alors $X_{*}(L_{\theta}(\mathbf{T})) \supseteq Im(L_{\theta}^{X})$ .

Posons $X_{*,\mathbb{Q}}:=X_{*} \otimes \mathbb{Q}$ et de même $L_{\theta,\mathbb{Q}}^{X}:X_{*,\mathbb{Q}} \rightarrow X_{*,\mathbb{Q}}$. Comme $\theta$ est d'ordre fini, $\theta$ est un endormorphisme semi-simple du $\mathbb{Q}$-espace vectoriel $X_{*,\mathbb{Q}}$. Ainsi, on a la décomposition $X_{*,\mathbb{Q}}=X_{*,\mathbb{Q}}^{\theta} \oplus Im(L_{\theta,\mathbb{Q}}^{X})$. On en déduit que $X_{*}^{\theta}+Im(L_{\theta}^{X})$ est d'indice fini dans $X_{*}$ et donc, comme $\mathbb{C}^{*}$ est divisible, que l'application $(X_{*}^{\theta} \otimes \mathbb{C}^{*}) \times (Im(L_{\theta}^{X})\otimes \mathbb{C}^{*})\rightarrow X_{*} \otimes \mathbb{C}^{*}$ est surjective. En particulier l'application $(X_{*}((\mathbf{T}^{\theta})^{\circ}) \otimes \mathbb{C}^{*}) \times (X_{*}(L_{\theta}(\mathbf{T}))\otimes \mathbb{C}^{*})\rightarrow X_{*} \otimes \mathbb{C}^{*}$ est surjective, d'où le résultat.
\end{proof}

%Bibliographie
\bibliographystyle{hep}
\bibliography{biblio}
\end{document}